\theoremstyle{plain}
\newtheorem{theorem}{Theorem}[section]
\newtheorem{prop}[theorem]{Proposition}
\newtheorem{lemma}[theorem]{Lemma}
\newtheorem{coro}[theorem]{Corollary}
\newtheorem{fact}[theorem]{Fact}
\theoremstyle{definition}
\newtheorem{remark}[theorem]{Remark}
\numberwithin{equation}{section}
\newcommand{\dd}{\,\mathrm{d}}
\newcommand{\ii}{\ts\mathrm{i}}
\newcommand{\ee}{\,\mathrm{e}}
\newcommand{\ts}{\hspace{0.5pt}}
\newcommand{\nts}{\hspace{-0.5pt}}
\DeclareMathOperator{\dens}{\mathrm{dens}}
\DeclareMathOperator{\card}{\mathrm{card}}
\DeclareMathOperator{\tri}{|\nts\nts |\nts\nts |}
\newcommand{\vL}{\varLambda}
\newcommand{\vU}{\varUpsilon}
\newcommand{\cA}{\mathcal{A}}
\newcommand{\cB}{\mathcal{B}}
\newcommand{\cD}{\mathcal{D}}
\newcommand{\cE}{\mathcal{E}}
\newcommand{\cI}{\mathcal{I}}
\newcommand{\cM}{\mathcal{M}}
\newcommand{\cO}{\mathcal{O}}
\newcommand{\ZZ}{\mathbb{Z}{\ts}}
\newcommand{\RR}{\mathbb{R}\ts}
\newcommand{\CC}{\mathbb{C}\ts}
\newcommand{\EE}{\mathbb{E}}
\newcommand{\MM}{\mathbb{M}}
\newcommand{\NN}{\mathbb{N}}
\newcommand{\XX}{\mathbb{X}}
\newcommand{\YY}{\mathbb{Y}}
\newcommand{\one}{\mathbbm{1}}
\newcommand{\exend}{\hfill $\Diamond$}
\newcommand{\myfrac}[2]{\frac{\raisebox{-2pt}{$#1$}}
      {\raisebox{0.5pt}{$#2$}}}
\begin{document}

\title[Diffraction of a non-Pisot inflation]{Geometric 
  properties of a binary non-Pisot inflation\\[2mm]
  and absence of absolutely continuous diffraction}

\author{Michael Baake} 
\address{Fakult\"{a}t f\"{u}r Mathematik,
  Universit\"{a}t Bielefeld,\newline \hspace*{\parindent}Postfach
  100131, 33501 Bielefeld, Germany}
\email{mbaake@math.uni-bielefeld.de}

\author{Natalie Priebe Frank}
\address{Department of Mathematics and Statistics,
Vassar College, \newline
\hspace*{\parindent}Poughkeepsie, NY 12604, USA}
\email{nafrank@vassar.edu}

\author{Uwe Grimm} 
\address{School of Mathematics and Statistics,
  The Open University,\newline \hspace*{\parindent}Walton Hall, 
  Milton Keynes MK7 6AA, United Kingdom} 
\email{uwe.grimm@open.ac.uk}

\author{E.\ Arthur Robinson, Jr.}
\address{Department of Mathematics, 
George Washington University, \newline
\hspace*{\parindent}Washington, DC 20052, USA}
\email{robinson@gwu.edu}

\begin{abstract}
  One of the simplest non-Pisot substitution rules is investigated in
  its geometric version as a tiling with intervals of natural length
  as prototiles. Via a detailed renormalisation analysis of the pair
  correlation functions, we show that the diffraction measure cannot
  comprise any absolutely continuous component. This implies that the
  diffraction, apart from a trivial Bragg peak at the origin, is
  purely singular continuous. En route, we derive various geometric
  and algebraic properties of the underlying Delone dynamical system,
  which we expect to be relevant in other such systems as well.
\end{abstract}

\maketitle
\thispagestyle{empty}

\section{Introduction}

The spectral structure of substitution dynamical systems is well
studied, and many results are known; see \cite{Q} for a systematic
introduction. The theory is in good shape for substitutions of
constant length, both in one and in higher dimensions; see
\cite{Dekking,Rob,Nat1,squiral,Bartlett} as well as \cite{TAO} and
references therein. This is due to the fact that, for these systems,
the symbolic side and the geometric realisation with tiles of natural
size coincide, which also leads to a rather direct relation between
the diffraction measures of the system (and its factors) on the one
hand and the spectral measures on the other; see \cite{BLvE} and
references therein.

In general, the spectral theory of a substitution system and that of
its geometric counterpart can differ considerably \cite{CS}, in
particular when the inflation multiplier fails to be a
Pisot-Vijayaraghavan (PV) number \cite[Def.~2.13]{TAO}.  In fact,
beyond the substitutions of constant length, it often is simpler and
ultimately more revealing to use the geometric setting with natural
tile (or interval) sizes, as suggested by Perron--Frobenius theory. We
adopt this point of view below, and then speak of \emph{inflation
  rules} to make the distinction. Our entire analysis in this paper
will be in one dimension, where the tiles are just intervals.

Since rather little is known when one leaves the realm of PV inflation
multipliers, we present a detailed analysis of one of the simplest
non-Pisot (or non-PV) inflation rules on two letters, for which we
finally establish that the diffraction spectrum of the corresponding
Delone sets on the real line, apart from the trivial peak at $0$, is
purely singular continuous. En route, we shall encounter a number of
concepts and results that are described in some detail, in a way that
will facilitate generalisations to other inflation rules (and possibly
also to higher dimensions) in the future. A key ingredient to our
analysis is the study of the \emph{pair correlation functions} via
their exact renormalisation relations. The latter are analogous to
those recently derived \cite{BG15} for the Fibonacci inflation, where
they led to a spectral purity result and then to pure point
spectrum. This re-proved a known result in an independent way.

In the binary non-Pisot system studied below, the situation is more
complex because the spectrum is mixed, whence it remains to determine
the nature of the continuous part. To the best of our knowledge, the
answer is not in the literature, though the absence of absolutely
continuous components is certainly expected
\cite{Nat1,squiral,BG15,BerSol}. In anticipation of future work, we do
not present the shortest path to the result, as that would mean to
restrict more than necessary to methods that are limited to binary
alphabets and to this particular example. Instead, we use the concrete
system to investigate various concepts from \cite{BG15} in this more
complex case, with an eye to possible extensions and
generalisations. \smallskip

The paper is organised as follows. In Section~\ref{sec:prelim}, we
introduce the binary system via its symbolic substitution rule and the
matching geometric inflation tiling of the real line by two types of
intervals, following the general notions and results from
\cite{TAO}. Such a tiling is simultaneously considered as a
two-component Delone set, by taking the left endpoints of the
intervals as reference points. We also recall the construction of the
hull and its dynamical system, together with the key properties of the
latter. Section~\ref{sec:reno-coeff} introduces the pair correlation
functions and derives exact renormalisation relations, which are
then studied for their general solutions.  This part is not strictly
needed for our later analysis, but is interesting in its own right and
helps to understand the differences to the cases treated in
\cite{BG15}.

To continue, we need a reformulation of the pair correlation functions
in terms of translation bounded \emph{measures} and their Fourier
transforms, which is provided in Section~\ref{sec:reno-measure}. This
step emphasises the importance of two specific matrix families, whose
structure will later provide some arguments needed in the exclusion of
absolutely continuous diffraction. Section~\ref{sec:algebra} analyses
several properties of these matrix families by means of the (complex
resp.\ real) algebras generated by them. Once again, some of these
results go beyond what we need for our final goal, but highlight the
\emph{algebraic} structure of the problem.

Section~\ref{sec:explode} returns to the correlation measures and
their Fourier transforms. After splitting the transformed pair
correlation measures into their spectral parts (Lebesgue
decomposition), we rule out the existence of an absolutely continuous
component by a suitable iterated application of the renormalisation
relations in two directions. This approach employs the determination
of the corresponding extremal Lyapunov exponents, 
some details of which are given in
Appendix A.  Two underlying renormalisation arguments are further
explained in Appendix B, in the simpler setting of a scalar
equation. Section~\ref{sec:appl} covers an application to the
diffraction in the balanced weight case, where the pure point part is
extinct. In particular, we illustrate one specific case of a singular
continuous measure in this setting, based on a precise numerical
calculation of the corresponding (continuous) distribution function.

\section{Setting and preliminaries}\label{sec:prelim}

\subsection{Substitution, inflation and hull}
We consider the primitive two-letter substitution
\begin{equation}\label{subst}
  \varrho\! :\quad \begin{array}{l} 0 \mapsto 0111 \\ 
             1\mapsto 0\end{array}
\end{equation}
on the alphabet $\{0,1\}$. It defines a unique (symbolic) \emph{hull}
$\XX$, for instance via the shift orbit closure of the bi-infinite
fixed point $w$ of $\varrho^{2}$ with legal seed $w^{(0)}=0|0$,
\[
  0|0 \,\stackrel{\varrho^{2}}{\longmapsto}\,  w^{(1)}=0111000|0111000 
   \,\stackrel{\varrho^{2}}{\longmapsto}\,  \dots
   \,\stackrel{\varrho^{2}}{\longmapsto}\, w^{(i)}\; 
   \xrightarrow{\;i\to\infty\;}\; w=\varrho^{2}(w) \ts ,
\]
where finite words are considered as embedded into $\{0,1\}^{\ZZ}$ and
$|$ marks the reference point (between position $-1$ and $0$).  In
particular, $(\XX,\ZZ)$ with the continuous $\ZZ$-action generated by
the shift is a minimal topological dynamical system. There is
precisely one shift invariant probability measure $\mu^{}_{\XX}$ on
$\XX$, namely the patch (or word) frequency measure, so that
$(\XX,\ZZ,\mu^{}_{\XX})$ is strictly ergodic. The invariant measure is
intimately connected with the substitution origin; see
\cite{Q,Sol,TAO} for background.

The corresponding integer substitution matrix is
\begin{equation}\label{eq:submat}
     M \; = \; \begin{pmatrix}1 & 1 \\ 3 & 0\end{pmatrix} , 
\end{equation}
with irreducible characteristic polynomial
$\det( M - x \one) = x^2 - x - 3$ and Perron--Frobenius (PF)
eigenvalue $\lambda=(1+\sqrt{13}\,)/2 \approx 2.302 \ts 776$. Its
algebraic conjugate, which is also the second eigenvalue of $M$, is
$\lambda' = (1-\sqrt{13}\,)/2 = 1-\lambda \approx -1.302 \ts 776$,
which lies outside the unit circle, wherefore $\lambda$ is \emph{not}
a PV number. Since $\lvert \det (M) \rvert = 3$ is prime, $M$ has no
root in $\mathrm{Mat} (2,\ZZ)$, hence $\varrho$ cannot have a
substitutional root.

The statistically normalised right eigenvector $v^{}_{\text{PF}}$ to
the eigenvalue $\lambda$ is
\begin{equation}\label{eq:lettfreq}
    v^{}_{\text{PF}} \; = \; (\nu^{}_{0}, \nu^{}_{1})^{t} \; = \;
    \bigl(\lambda^{-1}, 3\lambda^{-2}\bigr)^{t}\; = \;
    \myfrac{1}{3}\,\bigl(\lambda-1, 4-\lambda\bigr)^{t}
    \; \approx \; \bigl(0.434, 0.566\bigr)^{t} ,
\end{equation}
which determines the (relative) frequencies of the two letters,
$\nu^{}_{0}$ and $\nu^{}_{1}$.  For a consistent geometric realisation
as an \emph{inflation rule} on two intervals, we use interval lengths
according to the left PF eigenvector of $M$, which we choose as
$(\lambda,1)$; see Figure~\ref{fig:infl} for an illustration. Note
that this choice is particularly simple from an algebraic point of
view, as the two lengths are the generating elements of
$\ZZ[\lambda]$. From a dynamical perspective, it would perhaps be more
natural to make a choice with average length $1$, but this would give
a more complicated algebraic structure for the coordinates.

\begin{figure}
\begin{center}
  \includegraphics[width=0.84\textwidth]{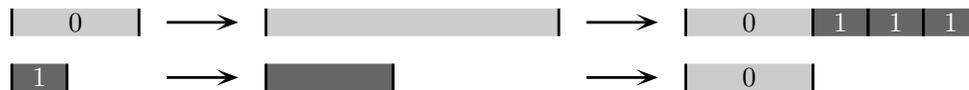}
\end{center}
\caption{\label{fig:infl} Inflation rule for the substitution
  $\varrho$ of Eq.~\eqref{subst}. It consists of two steps, an
  expansion of the intervals by $\lambda$ and the ensuing subdivision
  into intervals of the original length in the correct order. }
\end{figure}

Now, let $\vL^{w}$ denote the point set of left interval end points
that corresponds to our above fixed point $w$, so
\begin{equation}\label{eq:lam-set}
    \vL^{w} \; =\; \bigl\{\ldots, -1\!-\!3\lambda, 
     -3\lambda, -2\lambda,
       -\lambda, 0, \lambda, 1\!+\!\lambda, 
     2\!+\!\lambda, 3\!+\!\lambda,
       3\!+\!2\lambda,\ldots\bigr\} \; \subset\; \ZZ[\lambda]\, .
\end{equation}
This is a Delone set of density
$\dens (\vL^{w})=(6+\lambda)/13\approx 0.638{\ts}675$, and its orbit
closure under the natural translation action of $\RR$ defines the
\emph{geometric hull}
\begin{equation}\label{eq:def-cont-hull}
    \YY \, := \, \overline{\{ t + 
     \varLambda_{\phantom{\hat{\hat{I}}}}^{w} :
      t\in\RR \}}^{\mathrm{\ts LT}} \ts ,
\end{equation}
where the closure is taken in the local topology; see \cite{TAO} for
details. Note that $\YY$ is compact in the local topology, as a result
of $\varLambda^{w}$ being a Delone set of \emph{finite local
  complexity}, which means that the Minkowski difference
$\varLambda^{w}-\varLambda^{w}$ is a locally finite set.

Now, $(\YY,\RR)$ is once again a minimal topological dynamical system,
which still only has one invariant probability measure, namely the one
induced by $\mu^{}_{\XX}$ from above, so that $(\YY,\RR,\mu^{}_{\YY})$
is strictly ergodic, too. This system can be obtained as a suspension
of the previous one, with a non-constant roof function; see
\cite[Ch.~11]{CFS} for background. It is this latter system that we
investigate now in more detail. To this end, we also need the
Minkowski difference
\[
   \varDelta \, := \,  \vL^{w}-\vL^{w} \, = \, 
   \bigl\{0, \pm 1, \pm 2, \pm \lambda, 
    \pm 3, \pm (1\! +\! \lambda), \pm (2\! +\! \lambda), \pm 2\lambda, 
   \pm (3\! +\! \lambda), \pm (1\! +\! 2\lambda),
    \ldots \bigr\} ,
\]
which is the set of distances between points in $\vL^{w}$ and
satisfies $\varDelta \subset \ZZ[\lambda]$.

\begin{prop}\label{prop:hull}
  Let\/ $\YY$ be the geometric hull from Eq.~\eqref{eq:def-cont-hull},
  and\/ $\vL\in\YY$.  Then, the Minkowski difference\/
  $\varDelta_{\vL} := \vL -\vL$ is a locally finite subset of\/ $\RR$,
  but it is not uniformly discrete. One has\/ $\varDelta_{\vL} =
  \varDelta$ for all\/ $\vL\in\YY$, so the difference set is constant
  on\/ $\YY$.
\end{prop}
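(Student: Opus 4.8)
The statement has a soft part — the identity $\varDelta_{\vL}=\varDelta$ together with local finiteness — and a substantive part, the failure of uniform discreteness, which is where the real work lies. For the soft part the plan is to observe that every $\vL\in\YY$ has exactly the same collection of finite patches (the same ``language'') as $\varLambda^{w}$: one inclusion is immediate from the construction of $\YY$ as a local-topology orbit closure combined with the finite local complexity of $\varLambda^{w}$ (on any ball, $\vL$ agrees after a translation with some $t+\varLambda^{w}$, and having finitely many patches of each radius forces that patch to actually occur in $\varLambda^{w}$), and the reverse inclusion is repetitivity, a consequence of minimality of $(\YY,\RR)$. Since whether a vector $d$ with $\lvert d\rvert\le\rho$ is a difference of two points of $\vL$ is visible inside balls of radius $\rho$, equality of languages yields $\varDelta_{\vL}=\varDelta$ for every $\vL\in\YY$; and $\varDelta$ is locally finite because $\varLambda^{w}$ has finite local complexity, as already recorded above Eq.~\eqref{eq:def-cont-hull}.

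For non-uniform-discreteness I would first reduce to a statement about the Meyer property: since $\varDelta\supseteq\varLambda^{w}-\{0\}=\varLambda^{w}$ is relatively dense, $\varDelta$ is uniformly discrete precisely when $\varLambda^{w}$ is a Meyer set, and I would contradict the latter via the dual-set characterisation — for a Meyer set the $\varepsilon$-dual $\{k\in\RR:\sup_{x\in\varLambda^{w}}\lvert\ee^{2\pi\ii kx}-1\rvert\le\varepsilon\}$ is relatively dense for every $\varepsilon>0$. The input that makes this fail is arithmetic self-similarity: because $w=\varrho^{2}(w)$ with $0$ the left endpoint of a level-$2$ supertile, one has the exact inclusion $\lambda^{2}\varLambda^{w}\subseteq\varLambda^{w}$, and since $\lambda^{2}=3+\lambda\in\varLambda^{w}$ this gives $\mu^{n}\in\varLambda^{w}$ for all $n\ge 1$, where $\mu:=\lambda^{2}$ is a quadratic irrational (root of $x^{2}-7x+9$) whose Galois conjugate $\mu'=(\lambda')^{2}=4-\lambda$ has $\lvert\mu'\rvert>1$.

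Feeding $x=\mu^{n}$ into the dual-set condition forces $k\mu^{n}$ to lie within $\varepsilon/4$ of an integer $a_{n}$ for all $n\ge 1$; the identity $\mu^{2}=7\mu-9$ then makes the $a_{n}$ satisfy $a_{n+2}=7a_{n+1}-9a_{n}$ once $\varepsilon$ is small, so $a_{n}=c_{1}\mu^{n}+c_{2}(\mu')^{n}$, and boundedness of $k\mu^{n}-a_{n}$ forces $c_{1}=k$ and $c_{2}=0$ — this is exactly where $\lvert\mu'\rvert>1$ is used: nothing contracts, so neither the principal nor the conjugate component can be absorbed. Hence $k\mu,k\mu^{2}\in\ZZ$, so $9k=7k\mu-k\mu^{2}\in\ZZ$ and $k\mu\in\ZZ$, and irrationality of $\mu$ gives $k=0$. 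Thus the $\varepsilon$-dual collapses to $\{0\}$ for small $\varepsilon$, which is not relatively dense, so $\varLambda^{w}$ is not Meyer and $\varDelta$ is not uniformly discrete. (The same arithmetic, read through the Galois embedding $\ZZ[\lambda]\hookrightarrow\RR\times\RR$, accounts for the displayed shape of $\varDelta$: a return vector $m\lambda+k$ is small exactly when $(m,k)$ is, up to a continued-fraction convergent of $\lambda$, nearly in the $\lambda'$-eigendirection of $M$, and the unbounded abelianisation discrepancy of $w$ — once more because $\lvert\lambda'\rvert>1$ — throws such nearly-coincident pairs into $\varDelta$.)

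The soft part is routine finite-local-complexity and minimality bookkeeping. The main obstacle is the non-uniform-discreteness step, and within it the two non-cosmetic points: establishing the exact scaling relation $\lambda^{2}\varLambda^{w}\subseteq\varLambda^{w}$ from the supertile/control-point structure, and invoking the right rigidity — the dual-set description of Meyer sets from \cite{TAO} — so that the number-theoretic argument, whose whole force comes from $\lambda$ being non-Pisot, can be run.
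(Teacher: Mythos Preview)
Your proposal is correct and follows the same overall strategy as the paper: constancy of $\varDelta_{\vL}$ on $\YY$ from local indistinguishability (minimality), local finiteness from finite local complexity, and non-uniform-discreteness by showing that $\varLambda^{w}$ fails to be Meyer because it is invariant under multiplication by a non-PV number. The only substantive difference is packaging: the paper invokes \cite[Thm.~2.4]{TAO} as a black box for the implication ``$\alpha S\subset S$ with $\alpha>1$ non-PV $\Rightarrow$ $S$ not Meyer'', whereas you effectively reprove that implication in this instance via the $\varepsilon$-dual characterisation and the linear recurrence satisfied by $\mu^{n}$. Your route is longer but self-contained, and it makes explicit where the condition $\lvert\lambda'\rvert>1$ enters.

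One point where you are actually more careful than the paper: you use $\lambda^{2}\varLambda^{w}\subseteq\varLambda^{w}$, deduced from $w=\varrho^{2}(w)$, rather than $\lambda\varLambda^{w}\subset\varLambda^{w}$ as written there. For the specific fixed point built from the seed $0|0$ the latter inclusion in fact fails (for example $-\lambda\in\varLambda^{w}$ but $-\lambda^{2}=-3-\lambda\notin\varLambda^{w}$), so working with $\mu=\lambda^{2}$ is the right move; the conclusion is of course unaffected since $\lvert(\lambda')^{2}\rvert>1$ as well.
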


\begin{proof}
  The hull $\YY$ is the translation orbit closure of the Delone set
  $\vL^{w}$ from Eq.~\eqref{eq:lam-set}.  Since $\lambda
  \vL^{w}\subset \vL^{w}$ by construction, but $\lambda$ is not a PV
  number, the set $\vL^{w}$ cannot be a Meyer set; compare
  \cite[Thm.~2.4]{TAO}. Consequently, $\vL^{w} - \vL^{w}$ is discrete,
  but not uniformly discrete.\footnote{The failure of uniform
    discreteness comes from a property of the sequence
    $(\lambda^{m})^{}_{m\in\NN}$ which ultimately results in
    distances between neighbouring points of $\vL-\vL$ not being
    bounded from below.} Since the difference set is also closed, it
  is locally finite.

  The inflation rule derived from $\varrho$ is primitive, whence we
  know form standard arguments (compare \cite[Ch.~4]{TAO} for details)
  that the hull $\YY$ consists of the LI class of $\vL^{w}$, which
  means that any two elements of $\YY$ are locally
  indistinguishable. This implies that $\varDelta_{\vL}$ with
  $\vL\in\YY$ cannot depend on $\vL$, which establishes the second
  claim.
\end{proof}

In what follows, we will freely move between the tiling picture and
its representation as a Delone set, where we tacitly make use of the
equivalence concept of \emph{mutual local derivability} (MLD); compare
\cite[Sec.~5.2]{TAO} and references therein for background.

\subsection{Natural autocorrelation and diffraction}
Next, let us recall the notion of the natural autocorrelation of
$\vL\in\YY$, compare \cite{Hof} or \cite[Def.~9.1]{TAO}, which is
usually done by first turning $\vL$ into the Dirac comb
$\delta^{}_{\!\vL} := \sum_{x\in\vL} \delta_{x}$. The latter is both a
tempered distribution and a translation bounded measure on $\RR$. In
particular, $\delta^{}_{\!\vL}$ is \emph{not} a finite measure. Its
\emph{autocorrelation} $\gamma$ is then defined as the volume-averaged
(or Eberlein) convolution
\[
    \gamma \, = \, \delta^{}_{\!\vL} \circledast
    \delta^{}_{-\nts\vL} \, := \lim_{n\to\infty}
    \frac{\delta^{}_{\!\vL_{r}} \! * \delta^{}_{-\nts\vL_{r}}}{2 \ts r}
\]
where $\vL_{r} = \vL \cap [-r,r]$. The existence of the limit, for any
$\vL\in\YY$, is a consequence of unique ergodicity. Once again,
$\gamma$ is not a finite measure, but it is translation bounded.  In
fact, from a simple calculation together with
Proposition~\ref{prop:hull}, one can see that
$\gamma = \sum_{z\in \varDelta} \eta (z) \ts \delta_{z}$ with the
autocorrelation coefficients
\[
    \eta (z) \, = \lim_{r\to\infty} 
    \frac{\card \bigl(\vL_{r} \cap (z + \vL_{r})\bigr)}{2 \ts r}
    \, =  \lim_{r\to\infty} 
    \frac{\card \bigl(\vL_{r} \cap (z + \vL)\bigr)}{2 \ts r} .
\]
More generally, we also need to consider \emph{weighted} Dirac combs
$\omega$ with (generally complex) weights $u^{}_{0}$ and $u^{}_{1}$
for the two letters (or point types). They are defined as
\begin{equation}\label{eq:comb}
  \omega \, = \, \omega^{}_{\!\varLambda,u} \, = 
   \sum_{x\in\vL} u(x)\, \delta_{x} 
\end{equation}
with $u(x)\in\{u^{}_{0}, u^{}_{1}\}$ depending on whether $x$ is the
left endpoint of an interval of type $0$ or $1$. In other words, we
consider $\vL = \vL^{(0)}\ts \dot{\cup} \vL^{(1)}$ as the disjoint
union of two Delone sets, according to the two types of points in
$\vL$, and hence as a two-component Delone set. In our case at hand,
the two versions are MLD, because the prototiles (intervals) have
different lengths.  Note that, for $i\in\{ 0,1\}$, we then have the
density relations
\begin{equation}\label{eq:dens-rel}
    \dens \bigl(\vL^{(i)} \bigr) \, = \, \nu^{}_{i} \, \dens (\vL) 
\end{equation}
with the frequencies $\nu^{}_{i}$ from Eq.~\eqref{eq:lettfreq}.

The natural autocorrelation of such a Dirac comb $\omega$ is 
\begin{equation}\label{eq:Eberlein}
   \gamma^{}_{u} \, = \,
   \omega \circledast \widetilde{\omega}
         \, :=  \lim_{r\to\infty} \frac{\omega_{r}\nts *  
        \widetilde{\omega_{r}}}{2 \ts r} \ts ,
\end{equation}
with $\omega = \omega^{}_{\!\varLambda,u}$ and $\omega_{r}$ denoting
the restriction of $\omega$ to the interval $[-r,r]$. Moreover, the
twisted measure $\widetilde{\mu}$ is defined by
$\widetilde{\mu} (g) = \overline{\mu (\widetilde{g})}$ for test
functions $g\in C_{\mathsf{c}} (\RR)$, with
$\widetilde{g} (x) := \overline{g(-x)}$; see \cite[Ch.~9.1]{TAO} for
background. As before, the existence of the limit is a consequence of
unique ergodicity, and we have
$ \gamma^{}_{u} = \sum_{z\in\varDelta} \eta^{}_{u} (z)\ts
\delta^{}_{z}$, this time with
\begin{equation}\label{eq:gen-eta}
   \eta^{}_{u} (z) \; = \, \lim_{r\to\infty} \,\myfrac{1}{2 \ts r} \!
   \sum_{y,y+z\in \vL_{r}}\!\! \overline{u(y)}\, u(y+z) \ts .
\end{equation}
By construction, any such autocorrelation $\gamma^{}_{u}$ is a
\emph{positive definite} measure, which means that
$\gamma^{}_{u} (g * \widetilde{g}) \geqslant 0$ holds for all
$g\in C_{\mathsf{c}} (\RR)$. This is significant because any positive
definite measure is Fourier transformable as a measure, which is a
non-trivial statement since $\gamma^{}_{u}$ is not a finite
measure. Its Fourier transform is then a positive measure, as a
consequence of the Bochner--Schwartz theorem; see \cite{BF,RS} for
background.

\begin{prop}\label{prop:hull-2}
  Given arbitrary weights\/ $u^{}_{0}, u^{}_{1} \in \CC$ for the two
  types of points, the autocorrelation measure\/ $\gamma^{}_{u}$ is
  positive definite, and it is the same for all\/ $\vL\in\YY$, which
  means that\/ $\gamma^{}_{u}$ is the autocorrelation both for an
  arbitrary element of the hull and for the entire hull. The analogous
  statement holds for the diffraction measure\/
  $\widehat{\gamma^{}_{u}}$, which is always a translation bounded,
  positive measure.
\end{prop}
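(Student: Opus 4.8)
The plan is to treat the three assertions of the proposition separately --- positive definiteness of $\gamma^{}_{u}$, its constancy over the hull, and the corresponding properties of $\widehat{\gamma^{}_{u}}$ --- and in that order, since the first is formal, the second is the substance, and the third then follows. For positive definiteness I would argue directly from the Eberlein-convolution definition \eqref{eq:Eberlein}: each approximant $\frac{1}{2r}\,\omega^{}_{r} * \widetilde{\omega^{}_{r}}$ is of the form $\frac{1}{2r}\, h * \widetilde{h}$ with $h = \omega^{}_{r}$ a finite (complex) measure, hence positive definite, and the cone of positive definite measures is closed in the vague topology; so the limit $\gamma^{}_{u}$ inherits the property. (This is the remark already made after \eqref{eq:gen-eta}, recorded here only for completeness.)

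The main step is constancy of $\gamma^{}_{u}$ over $\YY$, for which I would invoke the unique ergodicity of $(\YY,\RR,\mu^{}_{\YY})$. Write $T^{}_{t}$ for the translation action and, for $g\in C_{\mathsf c}(\RR)$, introduce $F^{}_{g}\colon \YY\to\CC$ by $F^{}_{g}(\vL) := \omega^{}_{\vL,u}(g) = \sum_{x\in\vL} u(x)\ts g(x)$, where $\vL$ is read as a two-component Delone set (available by MLD). Unfolding the convolution and pairing $\gamma^{}_{u}$ with a product $g^{}_{1} * \widetilde{g^{}_{2}}$, one rewrites $\gamma^{}_{u}(g^{}_{1} * \widetilde{g^{}_{2}})$ as the average over $[-r,r]$, as $r\to\infty$, of $x\mapsto \overline{F^{}_{g^{}_{2}}(T^{}_{-x}\vL)}\,F^{}_{g^{}_{1}}(T^{}_{-x}\vL)$, the boundary corrections incurred by replacing $\omega^{}_{r}$ with $\omega$ being uniformly bounded and hence negligible after the $\frac{1}{2r}$ normalisation. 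The key point is that $F^{}_{g}$ is a \emph{continuous} function on the compact space $\YY$: if $\vL$ and $\vL'$ agree, up to a small translation, on a large centred ball, then the corresponding weighted sums differ by little, because $g$ is uniformly continuous with compact support --- and it is here that the finite local complexity of $\vL^{w}$ and Proposition~\ref{prop:hull} (which makes $\varDelta$, hence the combinatorics entering these sums, the same for every element of $\YY$) enter, to make the estimate uniform over $\YY$. The integrand above is thus the restriction to an orbit of a continuous function on $\YY$, so by the uniform form of the ergodic theorem for uniquely ergodic systems (Oxtoby) the average converges to $\int_{\YY}\overline{F^{}_{g^{}_{2}}}\,F^{}_{g^{}_{1}}\dd\mu^{}_{\YY}$, a number manifestly independent of $\vL$. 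Since products $g^{}_{1} * \widetilde{g^{}_{2}}$ are dense in $C_{\mathsf c}(\RR)$ (use an even approximate identity) and $\gamma^{}_{u}$ is a measure, this forces $\gamma^{}_{u}$ itself to be independent of $\vL\in\YY$; the phrasing ``autocorrelation for the entire hull'' is then just the tautology $\int_{\YY}\gamma^{}_{u,\vL}\dd\mu^{}_{\YY}(\vL) = \gamma^{}_{u}$.

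For the diffraction, positive definiteness of $\gamma^{}_{u}$ together with the Bochner--Schwartz theorem (see \cite{BF,RS}) gives that $\gamma^{}_{u}$ is Fourier transformable as a measure and that $\widehat{\gamma^{}_{u}}$ is a positive measure; that it is moreover translation bounded is the general fact that the Fourier transform of a translation bounded, positive definite measure is again translation bounded (compare \cite[Ch.~9]{TAO}). Constancy of $\widehat{\gamma^{}_{u}}$ over $\YY$ is immediate from that of $\gamma^{}_{u}$. The one place I expect to need genuine care --- and the only real obstacle --- is the continuity of the observables $F^{}_{g}$ on $\YY$ together with the interchange of limit and space average, i.e.\ upgrading ``the limit exists along one orbit'' to ``it exists uniformly and is base-point independent''; everything else is either formal or a direct appeal to unique ergodicity and Bochner--Schwartz. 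This step is routine given the local topology and finite local complexity, but it is precisely where the structural input of Proposition~\ref{prop:hull} is used.
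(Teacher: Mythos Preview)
Your argument is correct and complete. The treatment of positive definiteness and of the diffraction measure is essentially identical to the paper's (vague-limit stability of the positive definite cone; Bochner--Schwartz plus the standard translation-boundedness result, where the paper cites \cite[Prop.~4.9]{BF} rather than \cite[Ch.~9]{TAO}).

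For the constancy of $\gamma^{}_{u}$ over $\YY$, however, you take a genuinely different route. The paper argues combinatorially: the autocorrelation coefficients $\eta^{}_{u}(z)$ are determined by pair (patch) frequencies, which exist uniformly and coincide for all elements of the hull because any two are locally indistinguishable. You instead unfold $\gamma^{}_{u}(g^{}_{1}\ast\widetilde{g^{}_{2}})$ as an orbit average of a continuous observable $\overline{F^{}_{g_{2}}}\,F^{}_{g_{1}}$ on $\YY$ and invoke Oxtoby's uniform ergodic theorem for uniquely ergodic systems, then extend by density. This is the measure-dynamical approach of \cite{BL}; it is more general (it does not rely on the Delone structure beyond continuity of $F_{g}$) and makes the base-point independence emerge from a single abstract principle, at the cost of being longer. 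The paper's argument is shorter precisely because it exploits the discrete support and the explicit formula \eqref{eq:gen-eta}. One small remark: the continuity of $F^{}_{g}$ on $\YY$ follows already from the definition of the local topology together with compact support and uniform continuity of $g$; you do not actually need Proposition~\ref{prop:hull} for this step, so your attribution of the structural input there is a slight overstatement.
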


\begin{proof}
  As explained above, the first claim is a consequence of
  Eq.~\eqref{eq:Eberlein}, because positive definiteness of measures is
  preserved under limits in the vague topology.

  The second claim on the autocorrelation follows from the uniform
  existence of patch frequencies together with the fact that any
  two elements of the hull are locally indistinguishable.

  The statement on the diffraction then is a consequence of the
  uniqueness of the Fourier transform. The positivity of
  $\widehat{\gamma^{}_{u}}$ is clear by Bochner--Schwartz, while its
  translation boundedness follows from \cite[Prop.~4.9]{BF}.
\end{proof}

The Fourier transform $\widehat{\gamma^{}_{u}}$ is called the
\emph{diffraction measure} of $\omega$, which is thus always a
positive measure. With respect to Lebesgue measure on $\RR$, it has
the unique decomposition
\[
   \widehat{\gamma^{}_{u}} \; = \; 
   \bigl(\widehat{\gamma^{}_{u}} \bigr)_{\mathsf{pp}} +
   \bigl(\widehat{\gamma^{}_{u}} \bigr)_{\mathsf{sc}} +
   \bigl(\widehat{\gamma^{}_{u}} \bigr)_{\mathsf{ac}}
\]
into its pure point, singular continuous and absolutely continuous
parts; see \cite[Rem.~9.3]{TAO} for more. Our aim is to determine the
precise nature of $\widehat{\gamma}$ for our system.

Because we are dealing with a non-PV inflation multiplier
$\lambda$, we get the following result on the pure point part of the
diffraction measure.

\begin{theorem}\label{thm:pp}
  Let\/ $\vL = \vL^{(0)}\ts \dot{\cup} \vL^{(1)}$ be a fixed element
  of the hull\/ $\YY$.  Consider the weighted Dirac comb\/
  $\omega^{}_{\!  \vL,u} = u^{}_{0} \,
  \delta^{}_{\!\!\vL_{\vphantom{a}}^{(0)}} \nts + u^{}_{1}\,
  \delta^{}_{\!\!\vL_{\vphantom{a}}^{(1)}}$,
  with arbitrary complex weights\/ $u^{}_{0}$ and\/ $u^{}_{1}$.  Then,
  the pure point part of the corresponding diffraction measure\/
  $\widehat{\gamma^{}_{\!\vL,u}}$ is given by
\[
  \bigl(\widehat{\gamma^{}_{u}}\bigr)_{\mathsf{pp}} 
  \; = \; I^{}_{0}\, \delta^{}_{0} \, ,
  \quad \text{with } \, I^{}_{0} \, = \, \big\lvert
  \nts\nts \dens (\vL^{w})\,
  (u \cdot v^{}_{\mathrm{PF}})\big\rvert^{2} \, = \, 
  \big\lvert \tfrac{2\lambda-1}{13}u^{}_{0} + 
  \tfrac{7-\lambda}{13} u^{}_{1} \big\rvert^{2}  .
\]
In particular, the pure point part is the same for all\/
$\vL \in \YY$.
\end{theorem}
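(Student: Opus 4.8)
The plan is to compute the pure point part of $\widehat{\gamma^{}_{u}}$ directly from the Fourier--Bohr (amplitude) coefficients of the weighted Dirac comb $\omega^{}_{\!\vL,u}$, and to show that only the trivial wavenumber $k=0$ contributes. Recall that, for a translation bounded measure arising from a point set of finite local complexity, the pure point part of the diffraction is given by $\bigl(\widehat{\gamma^{}_{u}}\bigr)_{\mathsf{pp}} = \sum_{k} \lvert a(k) \rvert^{2}\, \delta^{}_{k}$, where the Fourier--Bohr coefficient is the mean $a(k) = \lim_{r\to\infty} \tfrac{1}{2r} \sum_{x\in\vL_{r}} u(x)\, \ee^{-2\pi\ii k x}$, the limit existing uniformly by strict ergodicity of $(\YY,\RR,\mu^{}_{\YY})$. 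Hence the entire statement reduces to the claim that $a(k) = 0$ for every $k\neq 0$, while $a(0) = \dens(\vL^{w})\,(u\cdot v^{}_{\mathrm{PF}}) = \nu^{}_{0}\dens(\vL^{w})\,u^{}_{0} + \nu^{}_{1}\dens(\vL^{w})\,u^{}_{1}$, the latter being immediate from the densities in Eq.~\eqref{eq:dens-rel} and the frequencies in Eq.~\eqref{eq:lettfreq}; one then checks $\nu^{}_{0}\dens(\vL^{w}) = \tfrac{2\lambda-1}{13}$ and $\nu^{}_{1}\dens(\vL^{w}) = \tfrac{7-\lambda}{13}$ by a one-line substitution of $\dens(\vL^{w}) = (6+\lambda)/13$.

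For the vanishing of $a(k)$ at $k\neq 0$ I would exploit the inflation structure. Writing $\vL^{(i)}$ as a disjoint union over the inflation images, the self-similarity $\lambda\vL^{w}\subset\vL^{w}$ together with the substitution rule $\varrho$ yields an exact renormalisation identity for the vector of amplitude coefficients $\bigl(a^{}_{0}(k), a^{}_{1}(k)\bigr)^{t}$, of the schematic form $a(\lambda k) = \tfrac{1}{\lambda} B(k)\, a(k)$, where $B(k)$ is a $2\times 2$ matrix whose entries are finite exponential sums dictated by the relative positions of the subtiles inside an inflated tile (these are exactly the $\ee^{-2\pi\ii k t}$ with $t\in\{0,\lambda,1+\lambda,2+\lambda\}$ for the $0$-tile and $t=0$ for the $1$-tile, read off from Figure~\ref{fig:infl}). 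Iterating gives $a(\lambda^{n} k) = \lambda^{-n}\, B(\lambda^{n-1}k)\cdots B(k)\, a(k)$. Since every amplitude coefficient is bounded in modulus by the total weight density (a uniform bound independent of $k$, as $\lvert a(k)\rvert \le (\lvert u^{}_{0}\rvert\vee\lvert u^{}_{1}\rvert)\,\dens(\vL^{w})$), the left-hand side stays bounded; the key point is then that the matrix products $B(\lambda^{n-1}k)\cdots B(k)$ do not grow fast enough to compensate the prefactor $\lambda^{-n}$ in a way that would force $a(k)=0$ — more precisely, one shows the opposite direction: a nonzero $a(k)$ would have to be an eigenvector-type object forcing $k$ into a lattice incompatible with the non-PV nature of $\lambda$.

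The cleanest route, and the one I would actually write, is the classical Bragg-spectrum argument specialised to non-PV inflations: the set of wavenumbers carrying positive pure point mass must be contained in $\tfrac{1}{\lambda}$-invariant (equivalently $\lambda$-invariant) subsets of the reciprocal structure, and for an inflation with irreducible quadratic multiplier $\lambda$ whose conjugate $\lambda' = 1-\lambda$ lies \emph{outside} the unit circle, the only $\lambda$-invariant wavenumber compatible with the finite local complexity of $\vL^{w}$ is $k=0$. Concretely: if $a(k)\neq 0$ then, by the renormalisation relation, $a(\lambda^{n}k)$ is bounded below along a subsequence, so $\{\lambda^{n}k : n\ge 0\}$ would have to be a relatively compact (indeed recurrent) subset of the dual group on which the $\varDelta$-pairing behaves almost periodically; but $\lambda^{n}k$ cannot stay bounded modulo the relevant lattice unless the $\lambda'$-component also contracts, which fails since $\lvert\lambda'\rvert>1$ — the Galois-conjugate coordinate of $k$ blows up, contradicting boundedness. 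This is exactly the mechanism behind the sentence in the excerpt ``Because we are dealing with a non-PV inflation multiplier $\lambda$'', and behind \cite[Thm.~2.4]{TAO} cited in the proof of Proposition~\ref{prop:hull}.

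The main obstacle is making the last paragraph rigorous without circular reasoning: one must pin down precisely which ambient group the wavenumbers live in (the dual of the $\ZZ[\lambda]$-module generated by $\varDelta$), and how the multiplication-by-$\lambda$ map acts on it together with its Galois conjugate, so that ``$\lvert\lambda'\rvert>1$ forces blow-up'' becomes a genuine contradiction with the uniform bound $\lvert a(k)\rvert\le (\lvert u^{}_{0}\rvert\vee\lvert u^{}_{1}\rvert)\dens(\vL^{w})$. I expect to handle this by invoking the standard structure theory for the pure point spectrum of primitive inflation systems (the Bragg peaks form a subgroup of the dual of the module $\langle\varDelta\rangle_{\ZZ}$, stable under multiplication by $\lambda$), and then arguing that the only such $\lambda$-stable point is $0$ precisely because $\lambda$ is not a PV number; the remaining computation of $I^{}_{0}$ is then the routine substitution indicated above, giving $\tfrac{2\lambda-1}{13}u^{}_{0}+\tfrac{7-\lambda}{13}u^{}_{1}$ inside the modulus squared, and the independence of $\vL\in\YY$ is immediate from Proposition~\ref{prop:hull-2}.
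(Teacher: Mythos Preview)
Your framework is sound: reduce to Fourier--Bohr coefficients, compute $a(0)$ from the densities, and show $a(k)=0$ for $k\ne 0$. The computation of $I_{0}$ and the invocation of Proposition~\ref{prop:hull-2} for independence of $\vL$ are fine. The gap is in the central step, and the paper closes it by a different and much shorter mechanism than the one you sketch.

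The paper does \emph{not} attempt a direct renormalisation or Galois-conjugate argument on the amplitudes. Instead, it uses the eigenfunction connection: for fixed $k$, the map $\vL\mapsto a^{}_{\vL,u}(k)$ is a continuous function on $\YY$ satisfying $a^{}_{t+\vL,u}(k)=\ee^{-2\pi\ii kt}a^{}_{\vL,u}(k)$, so if $a^{}_{\vL,u}(k)\ne 0$ it is a nontrivial continuous eigenfunction of $(\YY,\RR)$. One then simply cites Solomyak's theorem \cite[Thm.~4.3 and Cor.~4.5]{Sol}, which says that a self-similar tiling system with non-PV multiplier has only the trivial eigenfunction. This packages the entire ``non-PV forces triviality'' mechanism into a single known result, and the link between Bragg peaks and dynamical eigenvalues is supplied by \cite{Hof,Lenz}. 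Your proposal gestures at the content of Solomyak's theorem but tries to reprove it in situ, which is both unnecessary and, as you concede, not made rigorous.

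Two technical points on your sketch. First, the renormalisation for Fourier--Bohr amplitudes goes the other way: writing $\vL^{(i)}=\bigcup_{j}\bigcup_{t\in T_{ij}}(\lambda\vL'^{(j)}+t)$ and averaging gives $a(k)=\lambda^{-1}\,\overline{B(k)}\,a(\lambda k)$, not $a(\lambda k)=\lambda^{-1}B(k)\,a(k)$; iterating in your direction therefore does not produce the growth/decay you describe. Second, even with the correct relation, the matrix product $\overline{B(k)}\,\overline{B(\lambda k)}\cdots\overline{B(\lambda^{n-1}k)}$ can grow like $\lambda^{n}$ (its value at $k=0$ is $M^{n}$), so boundedness of $\lvert a(\lambda^{n}k)\rvert$ alone gives no contradiction. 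The genuine obstruction is the Diophantine condition that $\ee^{2\pi\ii k\lambda^{n}d}\to 1$ for return vectors $d\in\ZZ[\lambda]$, which fails for non-PV $\lambda$ --- and that is precisely what Solomyak's theorem encodes. Rather than rebuilding that argument, route through the eigenfunction characterisation as the paper does.
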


\begin{proof}
  The claim is trivial for $u^{}_{0} = u^{}_{1} = 0$, so let us assume
  that at least one of the weights is nonzero. Then, the dynamical
  system $(\YY_{u}, \RR)$, as obtained by the vague closure of the set
  $\{ \delta_{t} * \omega^{}_{\! \vL, u} : t \in \RR \}$, is seen
  to be topologically conjugate to $(\YY, \RR)$ via standard 
  MLD arguments. We thus know from \cite[Thm.~4.3 and
  Cor.~4.5]{Sol} that we only have the trivial eigenfunction for
  $(\YY_{u}, \RR)$.

  Now, assume $\widehat{\gamma^{}_{u}} \bigl( \{ k \} \bigr) = I(k)
  \ne 0$ for some $k\ne 0$. We then know from \cite[Thm.~3.4]{Hof} and
  \cite[Thm.~5]{Lenz} that $I(k) = \big\lvert a^{}_{\nts \vL,u} (k)
  \big\rvert^{2}$, with the Fourier--Bohr coefficient
\[
   a^{}_{\nts \vL,u} (k) \, = \lim_{r\to\infty} \myfrac{1}{2 \ts r}
   \sum_{x \in \vL_{r}} u(x) \, \ee^{- 2 \pi \ii k x} 
\]
and $u(x)$ as in Eq.~\eqref{eq:comb}.  Note that this coefficient
exists for all $\vL\in\YY$ (and even uniformly so) for our system due
to unique ergodicity. Note also that the coefficient depends on $\vL$,
while $I(k)$ does not.  In fact, for fixed $u$, the map given by
$\vL \mapsto a^{}_{\nts \vL,u} (k)$ defines an eigenfunction (in fact,
a continuous one) of $(\YY_{u}, \RR)$ because
$a^{}_{t+\vL,u} (k) = \ee^{-2 \pi \ii kt} a^{}_{\nts\nts \vL,u} (k)$ for
$t\in\RR$; compare \cite{Hof,Lenz}. Since we know that such an
eigenfunction cannot exist, we must have $I(k) = 0$.

Finally, the formula for $I_{0}$ follows from an application of
\cite[Prop.~9.2]{TAO} by observing that, for a weighted Dirac comb
supported on $\vL$, the averaging formula stated there reduces to 
the absolute square of the volume-averaged weights, via 
Eq.~\eqref{eq:dens-rel}.
\end{proof}

In other words, we can only have the trivial central Bragg peak.
Later, we will be interested in the situation that $I^{}_{0} = 0$,
which we refer to as the \emph{balanced weight case}. Since no
recursive formula for the coefficients $\eta (z)$ is known, and since
it is desirable to have a systematic approach to $\gamma$ for
arbitrary choices of the weights, we now turn to the pair correlation
functions and their properties. This will produce another path to
$\eta$ and $\gamma$.

\section{Renormalisation approach to pair
correlation functions}\label{sec:reno-coeff}

As before, we use the tiling picture (with the two types of intervals
as prototiles) and the two-component Delone set picture in parallel.
These two versions obviously give topologically conjugate dynamical
systems, wherefore we simply identify them canonically. Which
representation we use will always be clear from the context.

\subsection{Pair correlation functions}
Let $\vL$ be any element of the (geometric) hull $\YY$, and let
$\nu^{}_{ij}(z)$ denote the relative frequency of distance $z$ from a
left endpoint of an interval of type $i$ to one of type $j$, with
$i,j \in\{0,1\}$.  Decomposing
$\vL = \vL^{(0)}\ts \dot{\cup} \vL^{(1)}$ as before, this means
\begin{equation}\label{eq:def-nuij}
    \nu^{}_{ij}(z) \; = \; \lim_{r\to\infty}
    \frac{\card \bigl(\vL^{(i)}_{r} \cap 
    (\vL^{(j)}_{r} - z)\bigr)}{\card (\vL_{r})} \; = \;
    \myfrac{1}{\dens (\vL)}\, \lim_{r\to\infty}
    \frac{\card \bigl(\vL^{(i)}_{r} \cap 
    (\vL^{(j)}_{r} - z)\bigr)}{2 \ts r} \ts .
\end{equation}
These limits exist for any $z\in\RR$, and one has $\nu^{}_{ij}(z)
\geqslant 0$.  The use of relative frequencies is advantageous because
they are dimensionless and thus simplify various statements below.

The four functions $\nu^{}_{ij}$, which we call the \emph{pair
  correlation functions}, are well-defined, as another consequence of
the unique ergodicity of our system. Clearly, for any
$i,j \in \{ 0,1 \}$ and any $z\in \RR$, they satisfy the symmetry
relations
\begin{equation}\label{eq:symm}
    \nu^{}_{ij}(z) \, = \, \nu^{}_{ji}(-z) \ts .
\end{equation}
Moreover, we have $\nu^{}_{ij}(z)=0$ for any $z\not\in\vL-\vL$. To
improve on this, for any $\vL \in \YY$, decompose $\vL = \vL^{(0)}\ts
\dot{\cup} \vL^{(1)}$ as above and define the point sets
\begin{equation}\label{eq:def-Sij}
    S_{ij} \, := \, \vL^{(j)} - \vL^{(i)} .
\end{equation}
By an obvious variant of Propositions~\ref{prop:hull} and
\ref{prop:hull-2}, it is clear that each $S_{ij}$ is again independent
of the choice of $\vL$, hence constant on the hull. Due to strict
ergodicity, we then have the following stronger property.

\begin{fact}\label{fact:support}
  The pair correlation functions\/ $\nu^{}_{ij}$ defined in
  Eq.~\eqref{eq:def-nuij} are independent of the choice of
  $\vL \in \YY$. They satisfy the symmetry relations of
  Eq.~\eqref{eq:symm}. Moreover, one has\/ $\nu^{}_{ij}(z) > 0$ if and
  only if\/ $z \in S_{ij}$, where\/ $S_{ij}$ is the set from
  Eq.~\eqref{eq:def-Sij}, which is the same for all\/ $\vL \in \YY$.
  \qed
\end{fact}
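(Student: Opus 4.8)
The statement bundles three assertions, and the plan is to deal with them in increasing order of substance. For the independence of the $\nu^{}_{ij}$ on the choice of $\vL\in\YY$, I would simply repeat the mechanism behind Proposition~\ref{prop:hull-2}: by unique ergodicity of $(\YY,\RR,\mu^{}_{\YY})$, the volume-averaged counting expressions in Eq.~\eqref{eq:def-nuij} converge \emph{uniformly} in $\vL$, and their common limit is the corresponding cluster frequency computed against $\mu^{}_{\YY}$, which by construction carries no dependence on $\vL$; local indistinguishability of the elements of $\YY$ says the same thing from the combinatorial side. The symmetry relation $\nu^{}_{ij}(z)=\nu^{}_{ji}(-z)$ requires nothing beyond the remark already made around Eq.~\eqref{eq:symm}: the map $x\mapsto x+z$ is a bijection between $\vL^{(i)}_{r}\cap(\vL^{(j)}_{r}-z)$ and $\vL^{(j)}_{r}\cap(\vL^{(i)}_{r}+z)$, so that already the finite-$r$ counts agree, and $\dens(\vL)$ is unchanged under the swap.

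The real content is the support characterisation. One inclusion is trivial: if $z\notin S_{ij}=\vL^{(j)}-\vL^{(i)}$, then the set counted in Eq.~\eqref{eq:def-nuij} is empty for every $r$, hence $\nu^{}_{ij}(z)=0$. For the converse I would fix $z\in S_{ij}$ --- which is unambiguous, since $S_{ij}$ is the same for all $\vL\in\YY$ by the variant of Propositions~\ref{prop:hull} and~\ref{prop:hull-2} already used above --- and, using the symmetry relation, assume without loss of generality that $z\geqslant 0$. Choosing any $\vL\in\YY$ and points $x\in\vL^{(i)}$, $x+z\in\vL^{(j)}$, the interval $[x,x+z]$ meets the Delone set $\vL$ in only finitely many points, so the cluster of tiles it spans is a finite \emph{legal} patch $P$ that carries a type-$i$ point at its left end and a type-$j$ point a distance $z$ to its right.

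It then remains to upgrade ``$P$ occurs'' to ``$P$ occurs with strictly positive frequency'', and this is where primitivity of $\varrho$ enters: legality means $P$ sits inside $\varrho^{n}(a)$ for some letter $a$ and some $n$, and primitivity then forces $\varrho^{n}(a)$, hence $P$, to occur in $\varrho^{n+k}(b)$ for every letter $b$ once $k$ is large, so that $P$ recurs with bounded gaps in every $\vL\in\YY$. A relatively dense set of occurrences has positive lower density, and since the relevant limit exists (from strict ergodicity) this is a genuine frequency $\mathrm{freq}(P)>0$. Finally, every occurrence of $P$ contributes a point to the numerator set of Eq.~\eqref{eq:def-nuij}, so up to an $O(1)$ boundary correction $\card\bigl(\vL^{(i)}_{r}\cap(\vL^{(j)}_{r}-z)\bigr)\geqslant\#\{\text{occurrences of }P\text{ in }\vL_{r}\}-O(1)$; dividing by $\card(\vL_{r})$ and letting $r\to\infty$ gives $\nu^{}_{ij}(z)\geqslant\mathrm{freq}(P)>0$, with $\mathrm{freq}(P)$ the relative frequency of $P$ in $\vL$. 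The one step that deserves care is exactly this final comparison --- ensuring that the pair-correlation count genuinely \emph{dominates} a single legal-patch frequency, so that one gets a strictly positive bound rather than the vacuous $\nu^{}_{ij}(z)\geqslant 0$; note also that the failure of uniform discreteness of $\varDelta$ plays no role here, because $z$ is held fixed while each individual $\vL$ is itself uniformly discrete. The rest is the standard strict-ergodicity-plus-primitivity toolkit from \cite{TAO}.
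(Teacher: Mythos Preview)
Your proposal is correct and is precisely the unpacking of what the paper leaves implicit: the Fact is stated with a bare \qed, preceded only by the remark ``Due to strict ergodicity, we then have the following stronger property'', so the paper offers no proof beyond the pointer to strict ergodicity and the variant of Propositions~\ref{prop:hull} and~\ref{prop:hull-2}. Your argument --- unique ergodicity for independence, the bijection for symmetry, and primitivity/linear repetitivity to upgrade ``$z\in S_{ij}$'' to a strictly positive patch frequency bounding $\nu^{}_{ij}(z)$ from below --- is exactly the standard route the \qed gestures at.
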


The autocorrelation coefficients $\eta^{}_{u}(z)$ from
Eq.~\eqref{eq:gen-eta} for the weighted Dirac comb $\omega$ of
Eq.~\eqref{eq:comb} can now be expressed in terms of the pair
correlation functions of $\YY$ as a quadratic form, namely as
\begin{equation}\label{eq:etanu}
   \eta^{}_{u}(z) \; = \; \mathrm{dens}(\vL) \!\! 
   \sum_{i,j \in\{0,1\}}\!\!
   \overline{u^{}_{i}} \; 
   \nu^{}_{ij}(z) \, u^{}_{j}  \ts . 
\end{equation}
This shows that the `natural' objects to study are indeed the pair
correlation functions, as their knowledge gives access to the
autocorrelation measures (and hence to their Fourier transforms) for
\emph{any} choice of the weights. Also, as mentioned earlier, we are
not aware of a functional relation that would determine the
coefficients $\eta^{}_{u}(z)$ directly. However, such a relation can
be derived from the inflation structure for the four pair correlation
functions $\nu^{}_{ij}(z)$; see \cite{BG15} for related examples.

\begin{prop}\label{prop:reno-eqs}
  The pair correlation functions\/ $\nu^{}_{ij}$ of the hull\/ $\YY$
  satisfy the following linear renormalisation equations,
\begin{align*}
\nu^{}_{00}(z) & \, = \, \myfrac{1}{\lambda} \,\Bigl(
  \nu^{}_{00}\bigl(\tfrac{z}{\lambda}\bigr) +
  \nu^{}_{01}\bigl(\tfrac{z}{\lambda}\bigr) +
  \nu^{}_{10}\bigl(\tfrac{z}{\lambda}\bigr) +
  \nu^{}_{11}\bigl(\tfrac{z}{\lambda}\bigr)\Bigr), \\
\nu^{}_{01}(z) & \, = \, \myfrac{1}{\lambda} \,\Bigl( 
  \nu^{}_{00}\bigl(\tfrac{z-\lambda}{\lambda}\bigr) +
  \nu^{}_{00}\bigl(\tfrac{z-1-\lambda}{\lambda}\bigr) +
  \nu^{}_{00}\bigl(\tfrac{z-2-\lambda}{\lambda}\bigr) + 
  \nu^{}_{10}\bigl(\tfrac{z-\lambda}{\lambda}\bigr) +
  \nu^{}_{10}\bigl(\tfrac{z-1-\lambda}{\lambda}\bigr) +
  \nu^{}_{10}\bigl(\tfrac{z-2-\lambda}{\lambda}\bigr)\Bigr), \\
\nu^{}_{10}(z) & \, = \, \myfrac{1}{\lambda} \,\Bigl(
  \nu^{}_{00}\bigl(\tfrac{z+\lambda}{\lambda}\bigr) +
  \nu^{}_{00}\bigl(\tfrac{z+1+\lambda}{\lambda}\bigr) +
  \nu^{}_{00}\bigl(\tfrac{z+2+\lambda}{\lambda}\bigr) +
  \nu^{}_{01}\bigl(\tfrac{z+\lambda}{\lambda}\bigr) +
  \nu^{}_{01}\bigl(\tfrac{z+1+\lambda}{\lambda}\bigr) +
  \nu^{}_{01}\bigl(\tfrac{z+2+\lambda}{\lambda}\bigr)\Bigr), \\
\nu^{}_{11}(z) & \, = \, \myfrac{1}{\lambda} \,\Bigl(
  3\, \nu^{}_{00}\bigl(\tfrac{z}{\lambda}\bigr) +
  2\, \nu^{}_{00}\bigl(\tfrac{z+1}{\lambda}\bigr) +
  2\, \nu^{}_{00}\bigl(\tfrac{z-1}{\lambda}\bigr) +
  \nu^{}_{00}\bigl(\tfrac{z+2}{\lambda}\bigr) +
  \nu^{}_{00}\bigl(\tfrac{z-2}{\lambda}\bigr) \Bigr) ,
\end{align*}
together with\/ $\nu^{}_{ij}(z)=0$ for any\/ $z\not\in S_{ij}$ and the
symmetry relations\/ $\nu^{}_{ji}(z)=\nu_{ij}(-z)$ for all\/ $z\in\RR$
and all\/ $i,j \in \{0,1\}$.
\end{prop}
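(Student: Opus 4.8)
The plan is to read the four equations directly off the inflation substructure and then to close the recursion using that the pair correlation functions are constant on the hull (Fact~\ref{fact:support}). Concretely, I will recall that the geometric inflation induces a continuous surjection $\Phi\colon\YY\to\YY$ (it is in fact a homeomorphism by recognisability, but surjectivity is all that is needed here; see \cite[Ch.~4]{TAO}), pick, for a given $\vL\in\YY$, some $\widetilde{\vL}\in\YY$ with $\vL=\Phi(\widetilde{\vL})$, and decompose both by point type, $\vL=\vL^{(0)}\ts\dot{\cup}\ts\vL^{(1)}$ and $\widetilde{\vL}=\widetilde{\vL}^{(0)}\ts\dot{\cup}\ts\widetilde{\vL}^{(1)}$. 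The inflation rule of Figure~\ref{fig:infl} then reads, tile by tile: a type-$0$ tile of $\widetilde{\vL}$ with left endpoint $y$ produces, inside $\vL$, a type-$0$ tile at $\lambda y$ and type-$1$ tiles at $\lambda y+\lambda$, $\lambda y+\lambda+1$ and $\lambda y+\lambda+2$, whereas a type-$1$ tile at $y$ produces only a type-$0$ tile at $\lambda y$. Equivalently, $\vL^{(0)}=\lambda\widetilde{\vL}$ while $\vL^{(1)}=\bigl(\lambda\widetilde{\vL}^{(0)}+\lambda\bigr)\ts\dot{\cup}\ts\bigl(\lambda\widetilde{\vL}^{(0)}+\lambda+1\bigr)\ts\dot{\cup}\ts\bigl(\lambda\widetilde{\vL}^{(0)}+\lambda+2\bigr)$, with the three pieces disjoint since the internal offsets lie in $[\lambda,\lambda+3)=[\lambda,\lambda^{2})$ and distinct super-tiles are $\lambda$-scaled, non-overlapping copies of the tiles of $\widetilde{\vL}$.

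Next I fix $i,j\in\{0,1\}$ and count the pairs $(x,x+z)$ with $x\in\vL^{(i)}$ and $x+z\in\vL^{(j)}$ that enter $\nu^{}_{ij}(z)$ via Eq.~\eqref{eq:def-nuij}. Each of $x$ and $x+z$ has a unique parent tile in $\widetilde{\vL}$ (furnished by the super-tiling) together with a prescribed internal offset from the list above, say $x=\lambda y+s$ and $x+z=\lambda y'+s'$ with $y,y'\in\widetilde{\vL}$. The distance constraint becomes $y'-y=(z-s'+s)/\lambda$, so $y'-y$ ranges over $z/\lambda$ for $(i,j)=(0,0)$, over $(z-\lambda-k)/\lambda$ with $k\in\{0,1,2\}$ for $(0,1)$, over $(z+\lambda+k)/\lambda$ with $k\in\{0,1,2\}$ for $(1,0)$, and over $(z-d)/\lambda$ with $d=k'-k$, $k,k'\in\{0,1,2\}$, for $(1,1)$; in the last case the values $d=0,\pm1,\pm2$ occur with multiplicities $3,2,2,1,1$, which accounts for the coefficients in the $\nu^{}_{11}$-equation. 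For each such contribution one still sums over the admissible parent types of $x$ and $x+z$: the type-$0$ points of $\vL$ arise from parents of either type, the type-$1$ points only from type-$0$ parents, which is exactly why $\nu^{}_{00}$ involves all four $\nu^{}_{kl}$, why $\nu^{}_{01}$ involves $\nu^{}_{00}$ and $\nu^{}_{10}$, why $\nu^{}_{10}$ involves $\nu^{}_{00}$ and $\nu^{}_{01}$, and why $\nu^{}_{11}$ involves only $\nu^{}_{00}$ — where these denote the pair correlation functions of $\widetilde{\vL}$.

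It then remains to pass to the limit in Eq.~\eqref{eq:def-nuij}. Since a single super-tile has bounded diameter, counting the above pairs inside $[-r,r]$ for $\vL$ differs from counting the corresponding pairs inside $[-r/\lambda,r/\lambda]$ for $\widetilde{\vL}$ only by an $O(1)$ boundary term, which is annihilated by the $\tfrac{1}{2r}$-normalisation; combining this with $\tfrac{1}{2r}=\tfrac{1}{\lambda}\cdot\tfrac{1}{2(r/\lambda)}$ produces the common prefactor $\tfrac{1}{\lambda}$. Finally, $\dens(\widetilde{\vL})=\dens(\vL)$ and, by Fact~\ref{fact:support}, the pair correlation functions of $\widetilde{\vL}$ coincide with those of $\vL$, since both lie in $\YY$; this closes the system and yields precisely the four displayed identities. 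The side conditions are immediate: $\nu^{}_{ij}(z)=0$ for $z\notin S_{ij}$ and the symmetry $\nu^{}_{ij}(z)=\nu^{}_{ji}(-z)$ are just Fact~\ref{fact:support} and Eq.~\eqref{eq:symm} restated, and they provide a useful consistency check on the four equations.

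The step I expect to be most delicate is the limit argument and, beneath it, the claim that every point of $\vL$ has an unambiguous parent together with a well-defined internal offset: this relies on the existence (and, for comfort, uniqueness, i.e.\ recognisability) of the super-tiling induced by $\Phi$, and on the boundary discrepancies being $o(r)$ uniformly in $z$ on compact sets. Both facts are entirely standard for primitive, aperiodic interval inflations, but they are the place where the bookkeeping has to be done carefully rather than waved through.
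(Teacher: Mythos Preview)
Your argument is correct and follows essentially the same approach as the paper: both proofs relate pairs of tiles at distance $z$ to pairs of their parent supertiles, use that the supertile endpoints form $\lambda\widetilde{\vL}$ for some $\widetilde{\vL}\in\YY$, and invoke the hull-constancy of the $\nu_{ij}$ to close the recursion with the $1/\lambda$ prefactor. Your write-up is somewhat more explicit than the paper's (which leans on Figure~\ref{fig:recog}) in spelling out the decomposition $\vL^{(0)}=\lambda\widetilde{\vL}$, $\vL^{(1)}=\bigdotcup_{k=0}^{2}(\lambda\widetilde{\vL}^{(0)}+\lambda+k)$, the multiplicities $3,2,2,1,1$ in the $\nu_{11}$-equation, and the boundary discrepancy in the limit; note also that once you have chosen a preimage $\widetilde{\vL}$ under $\Phi$, the parent and offset of each $x\in\vL$ are automatically unique from the inflation construction itself, so recognisability is not strictly needed at that point.
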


\begin{figure}
\begin{center}
  \includegraphics[width=0.84\textwidth]{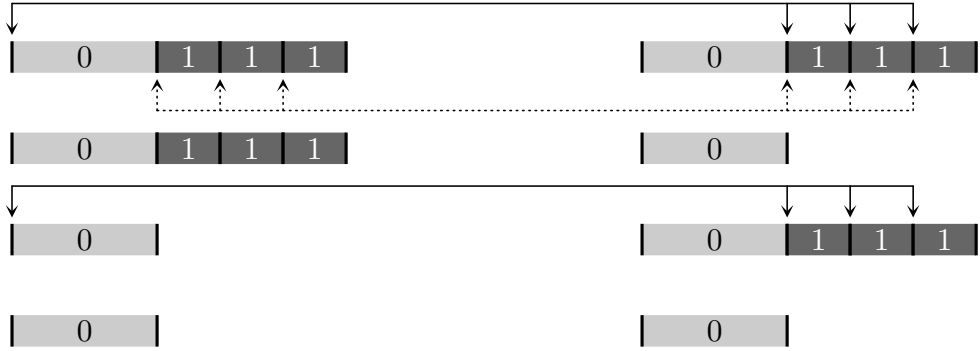}
\end{center}
\caption{\label{fig:recog} Illustration of the location of tiles
  within their level-$1$ supertiles for the proof of
  Proposition~\ref{prop:reno-eqs}. The solid arrows on top of the
  lines indicate the pairing for intervals of types $0$ (left) and $1$
  (right), the dashed arrows the pairings of two intervals of type
  $1$. The remaining two cases are analogous.}
\end{figure}

\begin{proof}
  Since our inflation rule is aperiodic, we have local recognisability
  \cite{Q}. This means that each tile in any (fixed) element of the
  hull lies inside a unique level-$1$ supertile that is identified by
  a local rule.  Concretely, each patch of type $0111$ constitutes a
  supertile of type $0$, while each tile of type $0$ that is followed
  by another $0$ (to the right) stands for a supertile of type
  $1$. Below, we simply say supertile, as no level higher than $1$
  will occur in this proof.

  Due to the inflation structure, it is also clear that the relative
  frequency (meaning relative to $\vL$) of two supertiles of type $i$
  and $j$ with distance $z$ (from $i$ to $j$) is given by
  $\frac{1} {\lambda}\ts \nu^{}_{ij}\bigl( \frac{z}{\lambda} \bigr)$.
  This follows from the simple observation that, for any $\vL\in\YY$,
  the point set of the left endpoints of the supertiles is a set of
  the form $\lambda \vL'$ for some $\vL'\in\YY$.

  We can now relate the occurrences of pairs of tiles at distance $z$
  to those of the supertiles they are in; see Figure~\ref{fig:recog}
  for an illustration. For instance, a distance $z$ between two tiles
  of type $0$ emerges once from any pair of supertiles (of either
  type) at the same distance.  With the above formula for the relative
  frequency of the supertiles, this gives the first equation.

  Likewise, the frequency $\nu^{}_{01}(z)$ is composed of supertile
  frequencies of type $00$, at distances $(z-\lambda)$,
  $(z-\lambda-1)$ and $(z-\lambda-2)$, and supertile frequencies of
  type $10$, at the same set of distances; see Figure~\ref{fig:recog}
  for an explicit illustration. This gives the second equation. The
  remaining two identities are derived analogously.

  The additional constraints are clear from Fact~\ref{fact:support}.
\end{proof}

\subsection{Solution space}
In view of our setting, it is clear that there is at least one
solution of the (infinite) linear system of equations in
Proposition~\ref{prop:reno-eqs}, under the extra conditions stated
there. Less obvious is the following, considerably stronger statement,
where a larger support is admitted and no symmetry relation is
prescribed.

\begin{theorem}\label{thm:reno-sols}
  Assume that $\nu^{}_{ij}(z) = 0$ for all\/ $z\not\in\varDelta$ and
  all\/ $i,j \in \{0,1\}$, and consider the subset of equations for\/
  $\nu^{}_{ij}(z)$ that emerges from
  Proposition~\emph{\ref{prop:reno-eqs}} by restricting to arguments\/
  $z\in\varDelta$ of modulus\/ $|z| \leqslant 1 + \lambda$. This is a finite
  and closed set of linear equations. The dimension of the solution
  space of all equations equals that of this finite subset.

  In particular, the dimension of the solution space is\/ $1$.
  Taking into account the requirement\/
  $\nu^{}_{00}(0)+\nu^{}_{11}(0)=1$ for the relative prototile
  frequencies, the solution is unique.
\end{theorem}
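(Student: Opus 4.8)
The plan is to identify the solution space of the full (infinite) system with that of the finite window system by a restriction map, prove this map is a linear isomorphism, and then extract the value~$1$ from the finite problem. Throughout, the arithmetic of $\lambda$ enters only through $\lambda^{2}=\lambda+3$.

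\textbf{Step 1: the window is closed.} First I would check that if $z\in\varDelta$ satisfies $|z|\leqslant 1+\lambda$, then every argument $z'$ occurring on a right-hand side of the equations in Proposition~\ref{prop:reno-eqs} again has $|z'|\leqslant 1+\lambda$. This reduces to four elementary estimates: for $\nu^{}_{00}$ one has $|z/\lambda|\leqslant(1+\lambda)/\lambda<1+\lambda$; for $\nu^{}_{11}$ one has $|(z\pm k)/\lambda|\leqslant(3+\lambda)/\lambda<1+\lambda$ with $k\in\{0,1,2\}$; and for $\nu^{}_{01},\nu^{}_{10}$ the largest value produced is $(2\lambda+3)/\lambda$, which equals $1+\lambda$ \emph{exactly} (here $\lambda^{2}=\lambda+3$ is used). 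Since $\varDelta$ is locally finite, $W:=\varDelta\cap[-(1+\lambda),1+\lambda]$ is finite, and since any argument $z'\notin\varDelta$ contributes a vanishing term, the restricted equations do form a finite, closed linear system; the symmetry $\nu^{}_{ij}(z)=\nu^{}_{ji}(-z)$ roughly halves the number of unknowns.

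\textbf{Step 2: restriction is a bijection of solution spaces.} Restricting any solution of the full system to $W$ clearly yields a solution of the finite one, so there is a natural linear restriction map; I would show it is both injective and surjective. The decisive fact — complementary to Step~1 — is that for every $z$ with $|z|>1+\lambda$ \emph{all} arguments appearing on the right-hand sides have \emph{strictly} smaller modulus than $z$, because $1+\lambda=(\lambda+2)/(\lambda-1)$ is precisely the fixed point of the worst-case map $R\mapsto(R+\lambda+2)/\lambda$ (again $\lambda^{2}=\lambda+3$). For injectivity: if a full solution $\delta$ vanishes on $W$ but is nonzero somewhere, choose a point at which it is nonzero, with modulus $B$, and then an index pair $(i,j)$ and a point $z_{0}$ of minimal modulus among the finitely many $z\in\varDelta$ with $1+\lambda<|z|\leqslant B$ and $\delta^{}_{ij}(z)\neq 0$. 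The equation for $\nu^{}_{ij}(z_{0})$ expresses $\delta^{}_{ij}(z_{0})$ through values $\delta^{}_{kl}(z')$ with $|z'|<|z_{0}|$, each of which vanishes (either $z'\notin\varDelta$, or $z'\in W$, or $1+\lambda<|z'|<|z_{0}|$ and minimality applies), a contradiction. For surjectivity: extend a finite solution $\phi$ by defining $\phi^{}_{ij}(z)$ for $z\in\varDelta$, $|z|>1+\lambda$, recursively in order of increasing $|z|$ via the corresponding equation; the strict modulus decrease makes the recursion well posed, and processing in increasing modulus is legitimate since each ball $\{|z|\leqslant B\}$ meets $\varDelta$ in a finite set. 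It then remains to verify the extension satisfies the full system; if one also wishes to impose the equations at points $z\notin\varDelta$, these are automatically satisfied, using the self-similarity identities $\lambda\varDelta\subseteq\varDelta$, $S_{00}=\lambda\varDelta$, $S_{11}=\lambda^{2}\varDelta+\{0,\pm1,\pm2\}$ and the analogous descriptions of $S_{01},S_{10}$ (all coming from $\vL^{(0)}=\lambda\vL'$ and $\vL^{(1)}=\lambda(\vL')^{(0)}+\{\lambda,1+\lambda,2+\lambda\}$ for a suitable $\vL'\in\YY$), which show the right-hand side of the $\nu^{}_{ij}$-equation at $z$ is nonzero only for $z\in S_{ij}$, so every equation centred outside $\varDelta$ reads $0=0$. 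Hence the restriction map is an isomorphism and the two solution spaces have equal dimension.

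\textbf{Step 3: the dimension is~$1$, and the normalisation pins the solution.} The genuine pair correlation functions of $\YY$ form a solution of the full system with $\nu^{}_{00}(0)=\nu^{}_{0}>0$, so the common dimension is at least~$1$. For the reverse bound it suffices to enumerate $W$ explicitly, write out the symmetry-reduced finite equations, and check that the coefficient matrix has corank~$1$ — a finite computation. Finally, the homogeneous system is scale invariant, whereas the relative prototile frequency condition $\nu^{}_{00}(0)+\nu^{}_{11}(0)=1$ is one inhomogeneous equation not implied by it (on the line of solutions its left-hand side is a nonzero multiple of~$1$), so it singles out a unique point. \textbf{Main obstacle.} I expect the crux to be Step~2, specifically the twin requirement that the window be closed and that the outward recursion terminate, both governed by the single coincidence $(2\lambda+3)/\lambda=(\lambda+2)/(\lambda-1)=1+\lambda$; one must in particular argue that no argument modulus stalls strictly above $1+\lambda$. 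The consistency of the outward extension with equations supported off $\varDelta$ is the other bookkeeping point, handled by the self-similarity of the $S_{ij}$. By comparison, the rank computation in Step~3, which is what actually produces the number~$1$, is routine.
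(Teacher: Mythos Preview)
Your proposal is correct and follows essentially the same route as the paper's own proof: verifying that the window $|z|\leqslant 1+\lambda$ is closed under the right-hand side arguments, that outside the window all arguments strictly decrease in modulus (so the finite system determines everything recursively), and then reducing to the finite rank computation. You spell out the injectivity/surjectivity of the restriction map more carefully than the paper, which simply invokes ``standard arguments'', and you correctly identify the governing coincidence $(2\lambda+3)/\lambda=1+\lambda$; your additional discussion of the equations at $z\notin\varDelta$ via the self-similarity of the $S_{ij}$ is extra care the paper does not address (it implicitly reads the full system as equations indexed by $z\in\varDelta$ only).
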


\begin{proof}
  Observe first that, when $\lvert z \rvert \leqslant \lambda + 1$, no
  argument on the right-hand sides of the identities in
  Proposition~\ref{prop:reno-eqs} exceeds $ \lambda + 1$ in
  modulus. Since
  $\{ z\in\varDelta : \lvert z \rvert \leqslant \lambda + 1\}$ is a
  finite set and $\nu^{}_{ij} (z)=0$ for any $z\not\in\varDelta$, the
  first claim is obvious. If, on the other hand,
  $\lvert z \rvert > \lambda + 1$, all arguments on the right-hand
  sides are strictly smaller than $\lvert z \rvert$, wherefore all
  coefficients $\nu^{}_{ij}(z)$ are determined by values at smaller
  arguments. Since $\varDelta$ is locally finite, the second claim
  follows from standard arguments.

  Thus, only the equations for
  $\lvert z \rvert \in \{0,1,2,\lambda, 3,1+\lambda\}$ need to be
  considered separately, as after that all frequencies are determined
  recursively. This gives $44$ linear equations that can be solved by
  standard methods, either by hand or by algebraic manipulation, the
  details of which we omit here.  It turns out that $\nu^{}_{00}(0)$
  is not fixed by the relations, while all other function values can
  be written as a function of $\nu^{}_{00} (0)$, with
  $\nu^{}_{11} (0) = \frac{3}{\lambda} \ts \nu^{}_{00} (0)$ in
  particular. This means that the solution space is indeed
  one-dimensional. Imposing $\nu^{}_{00}(0)+\nu^{}_{11}(0)=1$ with
  $\nu^{}_{00} (0)\geqslant 0$ results in
  $\nu^{}_{00}(0) = 1/\lambda$, so $\nu^{}_{00}(0) = \nu^{}_{0}$ and
  $\nu^{}_{11}(0) = \nu^{}_{1}$ with the frequencies from
  Eq.~\eqref{eq:lettfreq}. All other values are then uniquely
  determined as listed in Table~\ref{nutab}. We leave the details of
  this calculation to the reader.
\end{proof}

\begin{table}
\caption{Relative frequencies $\nu^{}_{ij}(z)$ for all
  distances $z\in\varDelta$ with $|z|\leqslant 1+\lambda$.\label{nutab}}
\renewcommand{\arraystretch}{1.5}
\begin{tabular}{|c|ccccccccccc|}
  \hline
 $z$ &  $-1-\lambda$ & $-3$ & $-\lambda$ & $-2$ & 
 $-1$ & $0$ &  $1$ & $2$ & $\lambda$ & $3$ & $1+\lambda$ \\ \hline
 $\nu^{}_{00}$ &  $0$ & $0$ & $3\lambda^{-3}$ &  $0$ &  
 $0$ &  $\lambda^{-1}$ & $0$ &  $0$ & 
 $3\lambda^{-3}$ &  $0$ &  $0$ \\
 $\nu^{}_{01}$ &  $\lambda^{-3}$ &  $\lambda^{-2}$ & $0$ & 
 $\lambda^{-2}$ & $\lambda^{-2}$ &  $0$ & $0$ &  $0$ & 
 $\lambda^{-2}$ &  $0$ & $\lambda^{-2}$ \\
 $\nu^{}_{10}$ &  $\lambda^{-2}$ & $0$ &  $\lambda^{-2}$ &  $0$ &  
 $0$ &  $0$ & $\lambda^{-2}$ & $\lambda^{-2}$ & 
 $0$ & $\lambda^{-2}$ & $\lambda^{-3}$ \\
 $\nu^{}_{11}$ &  $3\lambda^{-4}$ & $0$ & $0$ & $\lambda^{-2}$ & 
 $2\lambda^{-2}$ & $3\lambda^{-2}$ & $2\lambda^{-2}$ & $\lambda^{-2}$ &  
 $0$ &  $0$ & $3\lambda^{-4}$ \\ \hline 
\end{tabular}
\end{table}

Next, let us observe that each pair correlation function can uniquely
be decomposed as $ \nu^{}_{ij} (z) = \nu^{+}_{ij} (z) + \nu^{-}_{ij}
(z)$, with
\[
   \nu^{\pm}_{ij} (z) \, := \, \myfrac{1}{2} \, \bigl(
   \nu^{}_{ij} (z) \pm \nu^{}_{ji} (-z)\bigr) ,
\]
into a symmetric and an anti-symmetric part. Since the right-hand
sides of the renormalisation equations are linear and preserve the
symmetry type, we may conclude as follows.

\begin{coro}
  Under the assumptions of Theorem~\emph{\ref{thm:reno-sols}}, all
  solutions of the renormalisation equations from
  Proposition~\emph{\ref{prop:reno-eqs}} automatically satisfy the
  symmetry requirement of Proposition~\emph{\ref{prop:reno-eqs}}. In
  particular, the only anti-symmetric solution is the trivial one.
  \qed
\end{coro}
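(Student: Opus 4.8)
The plan is to combine the one-dimensionality of the solution space from Theorem~\ref{thm:reno-sols} with a reflection symmetry built into the renormalisation system itself. Introduce the linear involution $\Theta$ acting on tuples $(\nu^{}_{ij})^{}_{i,j\in\{0,1\}}$ by $(\Theta\nu)^{}_{ij}(z) := \nu^{}_{ji}(-z)$; note that $\Theta$ preserves the support condition from Theorem~\ref{thm:reno-sols}, since $\varDelta = -\varDelta$. The first step is to verify that $\Theta$ maps solutions of the renormalisation equations of Proposition~\ref{prop:reno-eqs} to solutions. This is a direct inspection of the four displayed identities: replacing $z$ by $-z$ in the equation for $\nu^{}_{00}$ and using $\nu^{}_{kl}(-\,\cdot\,) = (\Theta\nu)^{}_{lk}(\cdot)$ reproduces the $\nu^{}_{00}$-equation for $\Theta\nu$; the equations for $\nu^{}_{01}$ and $\nu^{}_{10}$ get interchanged by the same substitution, because the shift sets $\{\lambda,1+\lambda,2+\lambda\}$ and $\{-\lambda,-1-\lambda,-2-\lambda\}$ occurring there are negatives of one another; and the $\nu^{}_{11}$-equation is reflection-symmetric as it stands, since $\nu^{}_{00}$ enters with the palindromic coefficient pattern $(1,2,3,2,1)$ at the symmetric arguments $\{z\pm2,z\pm1,z\}$. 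Conceptually this is no accident: the relations were read off the supertile geometry, and a distance $z$ from a type-$i$ tile to a type-$j$ tile carries exactly the same information as a distance $-z$ from the type-$j$ tile to the type-$i$ tile.

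Granting this, linearity implies that for any solution $\nu$ both $\nu^{+} = \tfrac12(\nu+\Theta\nu)$ and $\nu^{-} = \tfrac12(\nu-\Theta\nu)$ are solutions, with $\Theta\nu^{+}=\nu^{+}$ and $\Theta\nu^{-}=-\nu^{-}$, i.e.\ they are precisely the symmetric and anti-symmetric parts $\nu^{\pm}_{ij}$ from the decomposition preceding the corollary. The pair correlation functions of $\YY$ themselves form a solution by Proposition~\ref{prop:reno-eqs} that is non-zero and symmetric by Fact~\ref{fact:support}. Since Theorem~\ref{thm:reno-sols} asserts that the solution space is one-dimensional, every solution is a scalar multiple of this symmetric one, hence symmetric; this is the first assertion. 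For the second, any anti-symmetric solution $\nu^{-}$ has $\nu^{-}_{00}(0) = \tfrac12\bigl(\nu^{}_{00}(0)-\nu^{}_{00}(0)\bigr) = 0$, and Theorem~\ref{thm:reno-sols} shows that the single value $\nu^{}_{00}(0)$ determines the entire solution, so $\nu^{-}\equiv 0$.

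I do not anticipate a genuine obstacle here: the only substantive work is the symmetry check in the first step, which is routine. The one point to be careful about is the bookkeeping needed to invoke Theorem~\ref{thm:reno-sols} verbatim, namely that $\Theta$ indeed carries the finite truncated system for arguments $z\in\varDelta$ with $|z|\leqslant 1+\lambda$ onto itself, which again follows at once from $\varDelta=-\varDelta$ and the interchange of the $\nu^{}_{01}$- and $\nu^{}_{10}$-equations noted above.
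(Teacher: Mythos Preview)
Your argument is correct and follows essentially the same route as the paper: the paper observes in the paragraph preceding the corollary that the right-hand sides of the renormalisation equations are linear and preserve the symmetry type, so the symmetric and anti-symmetric parts of any solution are again solutions, and the one-dimensionality from Theorem~\ref{thm:reno-sols} then forces everything to be a multiple of the known symmetric solution. Your version spells out the verification that the involution $\Theta$ preserves the system in more detail than the paper does, and your separate argument for $\nu^{-}\equiv 0$ via $\nu^{-}_{00}(0)=0$ is a harmless redundancy, since it already follows from the first assertion.
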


Let us comment on these findings. While the renormalisation
equations\/ are a consequence of the structure of the hull $\YY$, it
is by no means obvious that their solutions are essentially
unique. This property was previously shown for the Fibonacci system in
\cite{BG15}, but need not hold in general. In fact, uniqueness fails
for the Thue--Morse system as soon as the support is enlarged, and
this is then related to the existence of a spectrum of mixed
type. To further analyse the (still more complex) situation in our
non-PV point set, we need to reformulate the above findings in
terms of measures and their Fourier transforms.

\section{Pair correlation measures and their Fourier 
transforms}\label{sec:reno-measure}
 
\subsection{From functions to measures}
Given the pair correlation functions $\nu^{}_{ij}$, which have the
locally finite point sets $S_{ij}$ as supports, we can turn them into
positive pure point measures by defining
\begin{equation}\label{def-ups}
     \vU_{ij} \, := \sum_{z \in S_{ij}} \nu^{}_{ij} (z)
     \, \delta^{}_{z} \ts .
\end{equation}
Together with Eq.~\eqref{eq:def-nuij}, for any $0 \leqslant i,j
\leqslant 1$, this implies the relation
\begin{equation}\label{eq:def-ups}
     \vU_{ij} \, = \,
    \frac{\widetilde{\delta^{}_{\!\!\vL^{(i)}}}\! \circledast 
    \delta^{}_{\!\!\vL^{(j)}}}{\dens (\vL)} \, = \,
     \frac{\delta^{}_{- \nts \vL^{(i)}}\! \circledast 
    \delta^{}_{\!\!\vL^{(j)}}}{\dens (\vL)}  \ts ,
\end{equation}
as can be verified by an explicit calculation that is analogous to
that for Eq.~\eqref{eq:Eberlein}. Here, as before, $\vL \in \YY$ is
arbitrary, but the result is independent of its choice due to
Fact~\ref{fact:support}.  Note that $\vU_{ij} \bigl( \{ x \} \bigr) =
\nu^{}_{ij} (x)$ in this notation, and that Eq.~\eqref{eq:def-ups}
gives a first hint on an underlying tensor product structure. For this
reason, as usual in multilinear algebra, we will sometimes view $\vU$
as a measure matrix, then written as $\bigl( \vU_{ij} \bigr)_{0
  \leqslant i,j \leqslant 1}$, or as a measure vector $( \vU_{00},
\vU_{01}, \vU_{10}, \vU_{11})^{t}$ with lexicographic index ordering,
whatever is better suited.

To expand on the last point, let us mention that the vector notation
will have some advantage in the formulation of the measure-valued
renormalisation relations to be derived shortly. On the other hand,
let us observe that the matrix version is certainly useful for lifting
Eq.~\eqref{eq:etanu} to the level of measures. If $(u^{}_{0},
u^{}_{1})$ are the (complex) weights of the measure $\omega$ from
Eq.~\eqref{eq:comb}, so $\omega = u^{}_{0} \, \delta^{}_{\!\!
  \vL_{\vphantom{a}}^{(0)}} + u^{}_{1} \, \delta^{}_{\!\!
  \vL_{\vphantom{a}}^{(1)}}$, the corresponding autocorrelation
measure $\gamma^{}_{u}$ can be written as
\begin{equation}\label{eq:new-gamma}
    \gamma^{}_{u} (\cE) \, = \, \dens (\vL) \!\! \sum_{i,j \in \{ 0,1\} }
    \!\! \overline{u^{}_{i}} \; \vU^{}_{ij} (\cE) \, u^{}_{j}
\end{equation}
where $\cE\subset\RR$ is any bounded Borel\footnote{Here, we use the
  general Riesz--Markov representation theorem that allows us to
  identify our measures in the sense of linear functionals on
  $C_{\mathsf{c}} (\RR)$ with regular Borel measures on $\RR$
  in the sense of Radon measures.} set. Note that
$\omega$ is defined via a fixed $\vL\in\YY$, but that $\gamma^{}_{u}$
does not depend on it by Proposition~\ref{prop:hull-2}.

Each $\vU_{ij}$ is a positive pure point measure that is unbounded. It
is translation bounded as a consequence of Eq.~\eqref{eq:def-ups},
because the Eberlein convolution of translation bounded measures is
translation bounded again. Moreover, each $\vU_{ii}$ is positive
definite, while the $\vU_{ij}$ for $i\ne j$ can be written as the
(complex) linear combination of four positive definite measures by an
application of the complex polarisation identity; compare
\cite[Lemma~1]{BG15}. Consequently, each $\vU_{ij}$ is Fourier
transformable as a measure \cite{Arga}, and we get the following
result from the Bochner--Schwartz theorem; see \cite{RS,BF,TAO} for
background.

\begin{fact}\label{fact:FT}
  Each pair correlation measure\/ $\vU_{ij}$ from
  Eq.~\eqref{eq:def-ups} is an unbounded, but translation bounded pure
  point measure with support\/ $S_{ij}$. Moreover, it is Fourier
  transformable as a measure, and the transform\/ $\widehat{\vU_{ij}}$
  is a translation bounded and positive definite measure, which is
  also positive if\/ $i=j$.  \qed
\end{fact}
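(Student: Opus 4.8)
The plan is to verify the three asserted properties of each $\vU_{ij}$ --- being an unbounded but translation bounded pure point measure with support $S_{ij}$, Fourier transformability, and positivity (resp.\ positive definiteness of the transform) --- essentially by reducing everything to the Eberlein-convolution formula in Eq.~\eqref{eq:def-ups} and invoking standard facts about such convolutions. First I would record that $\vU_{ij}$ is a pure point measure with support exactly $S_{ij}$: purely atomic structure is immediate from the defining sum in Eq.~\eqref{def-ups}, while the identification of the support with $S_{ij}$ and the fact that $\nu^{}_{ij}(z)>0$ precisely on $S_{ij}$ is exactly the content of Fact~\ref{fact:support}. Unboundedness follows because $S_{ij}$ has positive density (each $\vL^{(i)}$ is a Delone set, so $\vL^{(j)}-\vL^{(i)}$ is relatively dense, and the frequencies $\nu^{}_{ij}$ are bounded below on their support by a uniform ergodicity argument) --- alternatively, and more cheaply, one simply notes that the $\vU_{ij}$ sum up (with the weights from Eq.~\eqref{eq:etanu}) to an autocorrelation $\gamma^{}_{u}$ which is known not to be finite, so at least one and hence, by the hull's homogeneity, all of them must be unbounded.

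Next I would handle translation boundedness. Here the key is Eq.~\eqref{eq:def-ups}, which exhibits $\dens(\vL)\,\vU_{ij}$ as the Eberlein convolution $\delta^{}_{-\vL^{(i)}}\circledast \delta^{}_{\vL^{(j)}}$ of two translation bounded measures. It is a standard fact (see \cite{TAO} or the references there) that the volume-averaged convolution of two translation bounded measures, when it exists, is again translation bounded; the existence of the limit has already been guaranteed by unique ergodicity in the discussion preceding Eq.~\eqref{def-ups}. So translation boundedness is inherited directly.

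For Fourier transformability and the positivity statements, I would follow the route already sketched in the text just above the Fact. The diagonal measures $\vU_{ii} = \dens(\vL)^{-1}\,\delta^{}_{-\vL^{(i)}}\circledast\delta^{}_{\vL^{(i)}} = \dens(\vL)^{-1}\,\delta^{}_{\vL^{(i)}}\circledast\widetilde{\delta^{}_{\vL^{(i)}}}$ are autocorrelations of a single Dirac comb, hence positive definite measures by the general theory; by Bochner--Schwartz (and the transformability of positive definite measures as measures, cf.\ \cite{Arga,BF,RS}), each $\widehat{\vU_{ii}}$ exists as a translation bounded positive measure. For the off-diagonal ones with $i\neq j$, I would apply the complex polarisation identity exactly as in \cite[Lemma~1]{BG15}: writing $4\,\overline{u}\,v = |u+v|^2 - |u-v|^2 + \ii|u+\ii v|^2 - \ii|u-\ii v|^2$ and using bilinearity of the Eberlein convolution in its two arguments, one expresses $\vU_{ij}$ as a complex linear combination of four measures of the form $\dens(\vL)^{-1}\,\delta^{}_{S}\circledast\widetilde{\delta^{}_{S}}$ for suitable Delone sets $S$ built from $\vL^{(i)}$ and $\vL^{(j)}$ --- each of which is a positive definite autocorrelation and hence Fourier transformable. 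Since Fourier transformability of measures is closed under finite complex linear combinations, $\widehat{\vU_{ij}}$ exists and is translation bounded (the latter again by \cite[Prop.~4.9]{BF} or the corresponding statement in \cite{TAO}); positivity is asserted only in the diagonal case, where it has already been established.

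The only genuinely delicate point --- and the one I would expect to be the main obstacle if one wanted to be fully rigorous --- is the polarisation step for $i\neq j$: one must make sure that the four Delone sets appearing after polarisation are genuinely of the form needed (i.e.\ that $\delta^{}_{\vL^{(i)}} \pm \delta^{}_{\vL^{(j)}}$ and $\delta^{}_{\vL^{(i)}} \pm \ii\,\delta^{}_{\vL^{(j)}}$ are again weighted Dirac combs supported on $\vL$ whose Eberlein self-convolutions exist by unique ergodicity), and that bilinearity of $\circledast$ is legitimate in this averaged setting. All of this is exactly the computation carried out in \cite{BG15}, so I would simply cite it; the novelty in our non-PV setting enters only later, when the \emph{nature} of $\widehat{\vU_{ij}}$ (rather than mere transformability) is analysed. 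In fact, the whole Fact is a routine transcription of standard diffraction-theoretic facts, and I would keep the proof to a few lines referring back to Eq.~\eqref{eq:def-ups}, Fact~\ref{fact:support}, Bochner--Schwartz, and \cite[Lemma~1]{BG15}.
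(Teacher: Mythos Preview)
Your proposal is correct and follows essentially the same route as the paper, which is the short paragraph immediately preceding the Fact: translation boundedness via the Eberlein convolution formula~\eqref{eq:def-ups}, positive definiteness of the diagonal $\vU_{ii}$ as autocorrelations, polarisation for the off-diagonal ones via \cite[Lemma~1]{BG15}, and Bochner--Schwartz for transformability and positivity of the transform. The one point you do not explicitly address is the positive definiteness of $\widehat{\vU_{ij}}$ for \emph{all} $i,j$ (not just positivity for $i=j$): this follows simply because each $\vU_{ij}$ is a positive measure (all $\nu^{}_{ij}(z)\geqslant 0$), and the Fourier transform of a transformable positive measure is positive definite.
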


At this point, we can Fourier transform Eq.~\eqref{eq:new-gamma} to
obtain
\begin{equation}\label{eq:u-FT}
    \widehat{\gamma^{}_{u}} (\cE) \, = \, \dens (\vL) \!\!
    \sum_{i,j \in \{ 0,1 \}} \!\! \overline{u^{}_{i}} \;
    \widehat{\vU}^{}_{ij} (\cE) \, u^{}_{j}
\end{equation}
as a general formula of the diffraction measure of the weighted Dirac
comb $\omega$ from Eq.~\eqref{eq:comb} in terms of the Fourier
transforms of the pair correlation measures, evaluated at a bounded
Borel set $\cE\subset \RR$. This explains why further relations among
the correlation measures will help to determine the spectral
properties of the diffraction measure.

\subsection{Renormalisation relations for measures}
Now, we have to rewrite the renormalisation equations for the pair
correlation functions from Proposition~\ref{prop:reno-eqs} in terms of
the correlation measures $\vU_{ij}$. To do so, we employ the
convolution with suitable \emph{finite} measures. This is well-defined
by standard results; see \cite[Prop.~1.13]{BF}. To proceed, recall
that we are working with prototiles (intervals) of natural lengths,
namely $\lambda$ and $1$ for tiles of type $0$ and $1$, respectively.
Define the set-valued location or \emph{displacement matrix}
$T = \bigl(T_{ij}\bigr)_{0\leqslant i,j \leqslant 1}$ by
\[
    T_{ij} \, := \, \{ \text{all relative positions of  tiles
     of type $i$ in the supertile of type $j$} \} \ts ,
\]
where the relative positions are again defined via the left endpoints
of the tiles (intervals). Clearly, $\card (T) := \bigl( \card (T_{ij})
\bigr)_{0\leqslant i,j \leqslant 1} = M$, and we have
\[
    T \, = \, \begin{pmatrix}
     \{ 0 \} & \{ 0 \} \\ 
     \{\lambda, \lambda + 1, \lambda + 2 \} &
     \varnothing \end{pmatrix} .
\]
We now turn this into a matrix of finite measures (or Dirac combs) by
setting
\[
     \delta^{}_{\ts T} \; := \;
     \begin{pmatrix} \delta^{}_{0} & \delta^{}_{0} \\
     \delta^{}_{\lambda} \nts + \delta^{}_{\lambda + 1}
     \nts + \delta^{}_{\lambda +2} & 0 \end{pmatrix}.
\]
Next, define the \emph{scaling function} (or dilation) $f$ on $\RR$ by
$f(z)=\lambda z$. Then, given $\mu$, the measure $f \nts. \ts \mu$ is
defined by $\bigl(f \nts . \ts \mu\bigr)(g) := \mu(g \circ f )$ for
$g\in C_{\mathsf{c}} (\RR)$. In particular, this means
$f \nts . \ts\ts \delta_{z} = \delta_{\nts f(z)}$. Using
$\delta_{x} * \delta_{y} = \delta_{x+y}$ and the usual rules for a
change of variable in weighted sums over Dirac measures, compare
\cite{BG15}, one obtains the following result from a straight-forward
calculation.

\begin{lemma}\label{lem:upsrec}
  With\/ $\vU_{ij}$ defined as in Eq.~\eqref{eq:def-ups}, the
  renormalisation equations from
  Proposition~\emph{\ref{prop:reno-eqs}} are equivalent to the measure
  identity
\[
       \vU \, = \, \myfrac{1}{\lambda} \, \Bigl(\ts
       \widetilde{\delta^{}_{T}} \stackrel{*}{\otimes} 
       \delta^{}_{T}  \Bigr) * (f. \ts\ts \vU) \ts ,
\]   
where\/ $\vU$ is considered as a vector of pure point measures, 
$*$ denotes convolution of
measures and\/ $\stackrel{*}{\otimes} $ the Kronecker convolution
product.  \qed
\end{lemma}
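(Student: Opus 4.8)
The plan is to unwind the matrix-vector identity into its four component equations and match them against Proposition~\ref{prop:reno-eqs} directly. First I would fix the bookkeeping: with $\vU=(\vU_{00},\vU_{01},\vU_{10},\vU_{11})^{t}$ in lexicographic order, the Kronecker convolution $\widetilde{\delta^{}_{T}} \stackrel{*}{\otimes} \delta^{}_{T}$ is the $4\times 4$ matrix of finite measures whose $\bigl((i,j),(k,l)\bigr)$ entry is $\widetilde{(\delta^{}_{T})_{ik}} * (\delta^{}_{T})_{jl}$. Here $\widetilde{\delta_{x}} = \delta_{-x}$ for a point mass, so $\widetilde{(\delta^{}_{T})_{ik}}$ is read off from the matrix $\delta^{}_{T}$ by negating the locations; for instance $\widetilde{(\delta^{}_{T})_{10}} = \delta_{-\lambda}+\delta_{-\lambda-1}+\delta_{-\lambda-2}$ and $\widetilde{(\delta^{}_{T})_{01}}=\delta_{0}$. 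Multiplying this matrix into the vector $f.\vU$ and using $\delta_{a}*(f.\vU_{kl}) = \delta_{a}*\sum_{z}\nu^{}_{kl}(z)\,\delta_{\lambda z} = \sum_{z}\nu^{}_{kl}(z)\,\delta_{\lambda z + a}$, one obtains each component of $\vU$ as an explicit sum of shifted, dilated copies of the $\vU_{kl}$.

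Second, I would evaluate the resulting measure identity on a one-point set $\{z\}$, i.e.\ compare the atoms. Since all the measures involved are pure point (Fact~\ref{fact:FT}), equality of measures is equivalent to equality of all atoms, and $\bigl(\delta_{a}*(f.\vU_{kl})\bigr)(\{z\}) = \vU_{kl}\bigl(\{\tfrac{z-a}{\lambda}\}\bigr) = \nu^{}_{kl}\bigl(\tfrac{z-a}{\lambda}\bigr)$. Reading off the $(0,0)$ component gives $\nu^{}_{00}(z) = \tfrac{1}{\lambda}\bigl(\nu^{}_{00}(\tfrac{z}{\lambda}) + \nu^{}_{01}(\tfrac{z}{\lambda}) + \nu^{}_{10}(\tfrac{z}{\lambda}) + \nu^{}_{11}(\tfrac{z}{\lambda})\bigr)$, which is exactly the first line of Proposition~\ref{prop:reno-eqs}; here the four summands come from the four pairs $(\delta^{}_{T})_{0k},(\delta^{}_{T})_{0l}$ with $(k,l)$ ranging over $\{0,1\}^2$, each a single point mass at $0$, so there is no shift. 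The $(1,1)$ component picks up $\widetilde{(\delta^{}_{T})_{1k}} * (\delta^{}_{T})_{1l}$: only $k=l=0$ contributes (the entry $(\delta^{}_{T})_{11}=0$), and $\widetilde{(\delta^{}_{T})_{10}}*(\delta^{}_{T})_{10} = (\delta_{-\lambda}+\delta_{-\lambda-1}+\delta_{-\lambda-2})*(\delta_{\lambda}+\delta_{\lambda+1}+\delta_{\lambda+2}) = 3\delta_{0} + 2\delta_{1}+2\delta_{-1}+\delta_{2}+\delta_{-2}$, reproducing the fourth line with its multiplicities $3,2,2,1,1$. The $(0,1)$ and $(1,0)$ components are handled the same way and give the remaining two lines, with the shifts $\mp\lambda,\mp(\lambda+1),\mp(\lambda+2)$ appearing precisely as in the Proposition.

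Third, the equivalence runs both ways: the computation above shows the measure identity implies all the scalar renormalisation equations (evaluated atom by atom), and conversely, multiplying each scalar equation by $\delta_{z}$ and summing over $z$ in the appropriate support set $S_{ij}$ reassembles the measure identity, since every step—change of variable $z\mapsto\lambda z$, translation, formation of $\widetilde{(\cdot)}$—is reversible on pure point measures. The constraint $\nu^{}_{ij}(z)=0$ for $z\notin S_{ij}$ from Proposition~\ref{prop:reno-eqs} is automatically encoded in the supports of the $\vU_{ij}$, and the symmetry relations $\nu^{}_{ij}(z)=\nu^{}_{ji}(-z)$ correspond to $\vU_{ij}=\widetilde{\vU_{ji}}$, which is built into the tensor structure of Eq.~\eqref{eq:def-ups}. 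The main obstacle is purely notational bookkeeping: getting the index conventions of the Kronecker convolution product right so that the $4\times 4$ block matrix genuinely reproduces the pairing combinatorics illustrated in Figure~\ref{fig:recog}, and checking that the transpose/tilde operations land on the correct factor. Once the convention is pinned down, the verification is a routine—if slightly tedious—expansion, which is why it is reasonable to state it as a lemma obtained ``from a straight-forward calculation'' and omit the details.
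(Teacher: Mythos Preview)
Your proposal is correct and follows precisely the approach the paper indicates: the paper itself gives no detailed proof, stating only that the result follows ``from a straight-forward calculation'' using $\delta_{x}*\delta_{y}=\delta_{x+y}$ and the usual change-of-variable rules, and marks the lemma with a \qed. Your expansion of the Kronecker convolution into the four component equations, evaluation on atoms, and verification against the four lines of Proposition~\ref{prop:reno-eqs} is exactly that calculation carried out explicitly, with the index conventions and the $(1,1)$-entry convolution $3\delta_{0}+2\delta_{\pm 1}+\delta_{\pm 2}$ all checked correctly.
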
   

More generally, if $\cM^{\infty} (\RR)$ denotes the space of
translation bounded measures on $\RR$, one can define a linear mapping
from $\cM^{\infty} (\RR)^{4}$ into itself by
$Y \mapsto \Bigl(\ts \widetilde{\delta^{}_{T}} \stackrel{*}{\otimes}
\delta^{}_{T} \Bigr) * (f\nts . Y)$,
which is continuous in the vague topology. Our measure vector $\vU\!$,
as defined with the pair correlation functions $\nu^{}_{ij}$ from
Proposition~\ref{prop:reno-eqs} and Theorem~\ref{thm:reno-sols}, is
then an eigenvector of this linear map, with eigenvalue $\lambda$.  It
might be an interesting question to analyse linear maps of this kind
more systematically. Here, with $\mu^{}_{\mathrm{L}}$ denoting
Lebesgue measure on $\RR$, we only mention that
 \begin{equation}\label{eq:sap}
    Y_{ij} \, := \, \nu^{}_{i} \, \nu^{}_{j}\, \mu^{}_{\mathrm{L}}
 \end{equation}
 with the frequencies $\nu^{}_{i}$ from Eq.~\eqref{eq:lettfreq} is
 another eigenvector. This can easily be checked from
 $\delta^{}_{x} * \mu^{}_{\mathrm{L}} = \mu^{}_{\mathrm{L}}$ for any
 $x\in\RR$ together with the observation that
 $v^{}_{\mathrm{PF}} \otimes v^{}_{\mathrm{PF}}$ is an eigenvector of
 $M\otimes M$, where $\otimes$ denotes the Kronecker product (see
 below for more).  In fact, up to an overall constant, the $Y_{ij}$
 from Eq.~\eqref{eq:sap} form the only solution of the measure-valued
 renormalisation relation where each $Y_{ij}$ is a multiple of
 Lebesgue measure. We will return to this point later.

\subsection{Renormalisation after Fourier transform}
Now, we can turn the new relation from Lemma~\ref{lem:upsrec} into one
on the Fourier side.  For consistency with previous work \cite{BG15},
we define
\[
       B \, = \, \widehat{ \widetilde{\delta^{}_{T}}}
       \, = \, \widehat{\delta^{}_{-T}} \, = \, 
       \overline{ \widehat{\delta^{}_{T}}} \ts ,
\]
where $B$ is a matrix of complex analytic functions because all
elements of $\delta^{}_{T}$ are finite measures with compact support;
$B$ is called the \emph{Fourier matrix} of the inflation rule.  In
line with common practice in this context, we use $k$ as the variable
on the Fourier side, which is real-valued for our further analysis.
Elementwise, we thus have
\begin{equation}\label{eq:B-def}
    B_{ij} (k) \, =  
   \sum_{t\in T_{ij}} \ee^{2\pi\ii  k t} ,
\end{equation}
where $B(0) = M$. Now, with 
\begin{equation}\label{eq:A-def}
       A(k) \, := \, B(k) \otimes \overline{B(k)},
\end{equation}
where the Kronecker product stands for the standard matrix
representation of the tensor product with lexicographic ordering of
the components, we can apply the convolution theorem to the identity
in Lemma~\ref{lem:upsrec}. Recalling further that
$\widehat{f \nts . \ts \mu} = \frac{1}{\lambda} \, ( f^{-1} \nts . \ts
\widehat{\mu} \ts )$ for our dilation $f$ and any transformable 
measure $\mu$, one obtains the following result.

\begin{prop}\label{prop:ups-FT}
  Under Fourier transform, the identity of Lemma~\emph{\ref{lem:upsrec}}
  turns into the relation
\[
       \widehat{\vU} \, = \, \myfrac{1}{\lambda^{2}} \,
       A(.) \cdot \bigl(f^{-1}\nts . \ts \widehat{\vU} \ts\ts \bigr),
\]    
   where\/ $A (.)$ is the matrix function of Eq.~\eqref{eq:A-def},
   $f$ is the dilation defined by\/ $f(x) = \lambda \ts x$, and\/
   $\widehat{\vU}$ is the vector of Fourier transforms of the pair
   correlation measures.  \qed
\end{prop}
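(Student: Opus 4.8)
The plan is to apply the Fourier transform to both sides of the identity in Lemma~\ref{lem:upsrec} and to keep track of how each operation on measures is converted under $\,\widehat{\phantom{x}}\,$. First I would check that every measure occurring on the right-hand side is Fourier transformable: the entries of $\delta^{}_{T}$ and of $\widetilde{\delta^{}_{T}}$ are finite measures with compact support, hence transformable with analytic transform, while $\vU$ (equivalently, each $\vU_{ij}$) is transformable by Fact~\ref{fact:FT}. Since $f$ is a dilation, $f.\ts\vU$ is again translation bounded and transformable, with the scaling rule $\widehat{f.\ts\mu} = \frac{1}{\lambda}\bigl(f^{-1}.\ts\widehat{\mu}\bigr)$ for any transformable measure $\mu$. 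The convolution of a finite, compactly supported measure with a translation bounded measure is well defined and translation bounded by \cite[Prop.~1.13]{BF}, and for such pairings the convolution theorem $\widehat{\mu * \nu} = \widehat{\mu}\,\widehat{\nu}$ is valid; this is what legitimises moving the transform inside the convolution on the right-hand side, component by component.

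Next I would compute the transform of the matrix object $\widetilde{\delta^{}_{T}} \stackrel{*}{\otimes} \delta^{}_{T}$. By definition $\widehat{\widetilde{\delta^{}_{T}}} = B$, while twisting a measure conjugates its transform, so $\widehat{\delta^{}_{T}} = \overline{B}$, consistently with the definition of $B$ given above. Since the Fourier transform turns the Kronecker \emph{convolution} product into the ordinary Kronecker product of the transformed matrices --- because convolution becomes pointwise multiplication entry by entry, and the lexicographic bookkeeping of tensor indices is unaffected --- one gets $\widehat{\,\widetilde{\delta^{}_{T}} \stackrel{*}{\otimes} \delta^{}_{T}\,} = B \otimes \overline{B} = A$, with $A(.)$ as in Eq.~\eqref{eq:A-def}.

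Putting the pieces together, transforming the identity of Lemma~\ref{lem:upsrec} and using these two facts yields
\[
  \widehat{\vU} \, = \, \myfrac{1}{\lambda}\, A(.)\cdot \widehat{f.\ts\vU}
  \, = \, \myfrac{1}{\lambda}\, A(.)\cdot \myfrac{1}{\lambda}\bigl(f^{-1}.\ts\widehat{\vU}\bigr)
  \, = \, \myfrac{1}{\lambda^{2}}\, A(.)\cdot \bigl(f^{-1}.\ts\widehat{\vU}\bigr),
\]
where the first equality is the convolution theorem applied term by term (with $\widehat{\widetilde{\delta^{}_{T}} \stackrel{*}{\otimes} \delta^{}_{T}} = A$), and the second is the dilation rule for $\widehat{f.\ts\vU}$; the left-hand side of Lemma~\ref{lem:upsrec} transforms simply to $\widehat{\vU}$. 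This is exactly the asserted relation. The only genuinely delicate point is the first paragraph: ensuring that all the measure-theoretic operations --- the Eberlein-type convolution defining $\vU$ via Eq.~\eqref{eq:def-ups}, the ordinary convolution with the finite kernel $\widetilde{\delta^{}_{T}} \stackrel{*}{\otimes} \delta^{}_{T}$, and the dilation $f$ --- are mutually compatible and compatible with Fourier transformation, so that the convolution theorem may be invoked component-wise. Once transformability has been secured through Fact~\ref{fact:FT}, this is routine, and what remains is only the bookkeeping of the factors of $\lambda$ and of the Kronecker structure.
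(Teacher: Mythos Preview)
Your proposal is correct and follows exactly the route the paper takes: the paper simply remarks, just before the proposition, that one applies the convolution theorem to the identity of Lemma~\ref{lem:upsrec} together with the dilation rule $\widehat{f.\ts\mu}=\tfrac{1}{\lambda}\,(f^{-1}.\ts\widehat{\mu})$, and then states the result with a \qed. You have spelled out precisely these two ingredients, together with the bookkeeping that $\widehat{\,\widetilde{\delta^{}_{T}}\stackrel{*}{\otimes}\delta^{}_{T}\,}=B\otimes\overline{B}=A$, which is just what the paper leaves implicit.
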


In analogy to above, one may view $\widehat{\vU}$ as an eigenvector of
a linear mapping, this time with eigenvalue $\lambda^{2}$.  Spelled
out elementwise, with the double indices from the Kronecker product,
the identity in Proposition~\ref{prop:ups-FT} reads
\[
      \widehat{\vU}_{ij} \, = \sum_{m,n}  A_{ij,mn} (.) \,
      \bigl(f^{-1}\nts . \ts \widehat{\vU}_{mn}\bigr) \ts ,
\]
which also explains how the elements of the matrix $A$ appear as
densities for the entries of the measure vector $\bigl(f^{-1}. \ts
\widehat{\vU} \ts\ts \bigr)$.  Here, $A_{ij,mn} (k) = B_{im} (k) \,
\overline{B_{jn} (k)}$ from the Kronecker product structure. This
means that we can alternatively write the identity for $\widehat{\vU}$
in matrix form as
\begin{equation}\label{eq:ups-FT-matrix}
    \widehat{\vU} \, = \, \myfrac{1}{\lambda^{2}} \,
    B (.)  \bigl( f^{-1}\nts . \ts \widehat{\vU} \ts \ts 
    \bigr) B^{\dag} (.) \ts ,
\end{equation}
where $B^{\dag}$ denotes the Hermitian adjoint of $B$ and
$\widehat{\vU}$ is considered as a $2 \!\times\! 2$-matrix. Both
versions will be handy later on.

From this short derivation, is should be clear that the structure of
the matrix functions $B(k)$ and $A(k)$ are important. We thus turn to
a more detailed analysis of them, before we return to the
renormalisation identities and their consequences for our spectral
problem.

\section{Inflation displacement algebra and 
Kronecker product extension}\label{sec:algebra}

\subsection{The inflation displacement algebra}
Let us define the total set $S_{T} := \bigcup_{i,j} T_{ij}$ of all
relative locations of prototiles in level one supertiles, which means
\[
    S_{T} \, = \, \{ 0, \lambda, \lambda + 1, \lambda + 2 \} \ts .
\] 
Then, we can decompose the Fourier matrices $B(k)$ from
Eq.~\eqref{eq:B-def} as
\begin{equation}\label{eq:def-digit}
   B(k) \, =  \sum_{x\in S_{T}} \ee^{2\pi\ii k x} D_{x}
\end{equation}
with integer $0\ts\ts$-$1$-matrices $D_{x}$ that satisfy $\sum_{x\in
  S_{T}} D_{x} = M$. Explicitly, we have
\begin{equation}\label{eq:dig-mat}
   D_{0} \, = \, \begin{pmatrix} 1 & 1 \\ 0 & 0 \end{pmatrix}
   \quad \text{and} \quad D_{\lambda} \, = \, D_{\lambda+1} \, = \,
   D_{\lambda+2} \, = \, \begin{pmatrix} 0 & 0 \\ 1 & 0 \end{pmatrix},
\end{equation}
where the matrix elements are given by
\[
   D_{x, ij} \, = \, \begin{cases}
   1 , & \text{if the supertile of type $j$ contains a tile of
    type $i$ at $x$} , \\
   0 , & \text{otherwise}\ts .  \end{cases}
\]
These geometric incidence matrices are a generalisation of what is
known as \emph{digit matrices} in constant length substitutions
\cite{Vince,Nat1}, wherefore we adopt this terminology here as well;
compare also \cite[Sec.~8.1]{Q}. Recently, also the term
\emph{instruction matrices} has been used \cite{Bartlett}.

Next, consider the $\CC$-algebra $\cB$ that is generated by the
one-parameter matrix family $\{ B(k) : k\in \RR \}$. We call this
finite-dimensional algebra, which is automatically closed, the
\emph{inflation displacement algebra} (IDA) of $\varrho$.  Consider
also the $\CC$-algebra $\cB_{\nts D}$ that is generated by the digit
matrices $\{ D_{x} : x\in S_{T}\}$, hence by $D_{0}$ and
$D_{\lambda}$ in our case. Recall that a family of
$2\!\times\!  2$-matrices is called \emph{irreducible} (over the field
$\CC$) if the only subspaces of $\CC^{2}$ that are invariant under all
elements of the matrix family are the trivial subspaces, $\{ 0 \}$ and
$\CC^{2}$.

\begin{lemma}\label{lem:ida}
  The IDA of\/ $\varrho$ satisfies\/
  $\cB = \cB_{\nts D} = \mathrm{Mat} (2,\CC)$, which is also the IDA
  of\/ $\varrho^{n}$ for any\/ $n\in\NN$.  In particular,
  $\cB_{\nts D}$ is irreducible. For any\/ $\varepsilon > 0$, the
  complex algebra generated by the matrix family\/
  $\{ B(k) : 0 \leqslant k < \varepsilon \}$ is again\/ $\cB$. In fact,
  any finite set of matrices\/ $\{ B(k) : k\in J \}$ generates\/
  $\cB$, if\/ $J$ contains at least two elements at which the
  trigonometric polynomial\/
  $p(k) = \ee^{2 \pi \ii k \lambda} + \ee^{2 \pi \ii k (\lambda + 1)}
  + \ee^{2 \pi \ii k (\lambda + 2)} = \ee^{2 \pi \ii k (\lambda + 1)}
  \bigl( 1 + 2 \ts \cos (2 \pi k) \bigr)$ differs.
\end{lemma}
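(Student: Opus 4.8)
The plan is to prove the chain of statements in Lemma~\ref{lem:ida} by first establishing $\cB_{\nts D} = \mathrm{Mat}(2,\CC)$ via an explicit computation, then deducing everything else. First I would compute products of the digit matrices $D_{0} = \left(\begin{smallmatrix} 1 & 1 \\ 0 & 0\end{smallmatrix}\right)$ and $D_{\lambda} = \left(\begin{smallmatrix} 0 & 0 \\ 1 & 0\end{smallmatrix}\right)$: one checks that $D_{0} D_{\lambda} = \left(\begin{smallmatrix} 1 & 0 \\ 0 & 0\end{smallmatrix}\right)$ and $D_{\lambda} D_{0} = \left(\begin{smallmatrix} 0 & 0 \\ 1 & 1\end{smallmatrix}\right)$, so that $D_{0} - D_{0} D_{\lambda} = \left(\begin{smallmatrix} 0 & 1 \\ 0 & 0\end{smallmatrix}\right) = E_{12}$ and then $E_{12} D_{\lambda} = \left(\begin{smallmatrix} 0 & 0 \\ 0 & 0\end{smallmatrix}\right)$ while $D_{\lambda} E_{12} = \left(\begin{smallmatrix} 0 & 0 \\ 0 & 1\end{smallmatrix}\right) = E_{22}$; combining, one obtains all four matrix units $E_{11}, E_{12}, E_{21}, E_{22}$ inside $\cB_{\nts D}$, so $\cB_{\nts D} = \mathrm{Mat}(2,\CC)$. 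Irreducibility of $\{D_{x}\}$ is then automatic, since an algebra equal to the full matrix algebra has no nontrivial invariant subspace (or one simply notes that no common eigenvector exists: $D_{0}$ has eigenvectors $(1,0)^{t}$ and $(1,-1)^{t}$, neither fixed by $D_{\lambda}$).

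Next I would address $\cB = \cB_{\nts D}$. The inclusion $\cB \subseteq \cB_{\nts D}$ is immediate since each $B(k) = D_{0} + p(k) D_{\lambda}$ is a linear combination of digit matrices, where $p(k) = \ee^{2\pi\ii k\lambda} + \ee^{2\pi\ii k(\lambda+1)} + \ee^{2\pi\ii k(\lambda+2)}$. For the reverse inclusion, the key observation is that as $k$ ranges over $\RR$ (in fact over any set on which $p$ takes at least two distinct values), the values $p(k)$ are not all equal, so we can pick $k_{1}, k_{2}$ with $p(k_{1}) \ne p(k_{2})$; then $B(k_{1}) - B(k_{2}) = \bigl(p(k_{1}) - p(k_{2})\bigr) D_{\lambda}$, which is a nonzero scalar multiple of $D_{\lambda}$, so $D_{\lambda} \in \cB$, and consequently $D_{0} = B(k_{1}) - p(k_{1}) D_{\lambda} \in \cB$ as well. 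Hence $\cB \supseteq \cB_{\nts D}$, giving equality. Since $p$ is a nonconstant real-analytic function (indeed $p(k) = \ee^{2\pi\ii k(\lambda+1)}\bigl(1 + 2\cos(2\pi k)\bigr)$ is nonconstant, e.g. $p(0) = 3 \ne p(1/2) = -\ee^{\pi\ii(\lambda+1)}$), it takes at least two distinct values on every nonempty open interval $[0,\varepsilon)$, which handles the statement about $\{B(k) : 0 \leqslant k < \varepsilon\}$ and the general statement about any finite $J$ on which $p$ is nonconstant.

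Finally, for the claim about $\varrho^{n}$: the Fourier matrix of $\varrho^{n}$ is the product $B_{n}(k) = B(\lambda^{n-1}k)\, B(\lambda^{n-2}k) \cdots B(k)$ (the displacement structure of iterated inflation composes multiplicatively with rescaled arguments, as in \cite{BG15}); each factor lies in $\cB = \mathrm{Mat}(2,\CC)$, so the IDA of $\varrho^{n}$ is contained in $\mathrm{Mat}(2,\CC)$, and the reverse containment follows because the same digit-matrix argument applies — the digit matrices of $\varrho^{n}$ still include a rank-one matrix supported in each of the two "slots," generating the full algebra. Alternatively one observes $B_{n}(0) = M^{n}$ together with the nonconstancy of the associated trigonometric polynomial, and repeats the subtraction trick. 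I expect the main obstacle to be purely bookkeeping: making the chain of equalities airtight requires being careful that $p$ is genuinely nonconstant on the relevant sets and that the iterated-inflation Fourier matrix has the stated product form, but there is no conceptual difficulty — the result is essentially the statement that two generic $0$-$1$ matrices with the given support pattern already generate $\mathrm{Mat}(2,\CC)$.
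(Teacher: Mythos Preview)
Your proof is correct and follows essentially the same approach as the paper: compute products of $D_{0}$ and $D_{\lambda}$ to obtain all matrix units, then use the relation $B(k) = D_{0} + p(k)\,D_{\lambda}$ together with nonconstancy of $p$ to recover $D_{0}$ and $D_{\lambda}$ from any two $B(k_{1}), B(k_{2})$ with $p(k_{1}) \ne p(k_{2})$. The only notable difference is in the treatment of $\varrho^{n}$: the paper simply asserts that $D_{0}$ and $D_{\lambda}$ remain among the digit matrices of $\varrho^{n}$, whereas you invoke the product formula $B_{n}(k) = B(\lambda^{n-1}k)\cdots B(k)$ and then gesture at a digit-matrix argument; both routes are short and neither is spelled out in full detail, so there is nothing substantive to add.
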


\begin{proof}
  From Eq.~\eqref{eq:dig-mat}, we already know that the digit matrices
  for $\varrho$ are $D_{0} = E_{00} + E_{01}$ and
  $D_{\lambda} = E_{10}$ with $E_{ij}$ denoting the standard
  elementary matrices, indexed with $i,j \in \{ 0,1 \}$ to match the
  labelling of the tile types.  Observing that
  $D_{0}\ts D_{\lambda} = E_{00}$ and
  $D_{\lambda} D_{0} = E_{10} + E_{11}$, we get $E_{01}$ and $E_{11}$
  via differences. We thus have all four elementary matrices $E_{ij}$
  within $\cB_{\nts D}$. This implies
  $\cB_{\nts D} = \mathrm{Mat}(2,\CC)$, which is irreducible.

  Next, one checks that $D_{0}$ and $D_{\lambda}$ are always among the
  digit matrices for $\varrho^{n}$ with $n\in\NN$, hence we always get
  the full matrix algebra, $ \mathrm{Mat}(2,\CC)$.

  The last claim, and then also the previous one, follows from the
  observation that Eq.~\eqref{eq:def-digit}, in view of the second
  identity in Eq.~\eqref{eq:dig-mat}, can be rewritten as
  $B (k) = D_{0} + p(k) \ts D_{\lambda}$. Thus, knowing $B(k)$ for
  $k_1$ and $k_2$ with $p(k_1) \ne p(k_2)$, we have a system of two
  equations that can be solved for $D_{0}$ and $D_{\lambda}$.  This
  also implies $\cB_{\nts D} \subseteq \cB$, while
  $\cB \subseteq \cB_{\nts D}$ is clear from Eq.~\eqref{eq:def-digit},
  so $\cB = \cB_{\nts D}$ as stated.
\end{proof}

For later reference, we note that the matrix $B(k)$ can now be written
as
\begin{equation}\label{eq:B-explicit}
   B(k) \, = \, \begin{pmatrix} 1 & 1 \\ p(k) & 0 \end{pmatrix},
\end{equation}
where $p$ is the trigonometric polynomial from
Lemma~\ref{lem:ida}. Let us state an elementary observation that will
later give us access to a compactness argument; see
\cite[Ex.~8.1]{TAO} and references given there for background
on quasiperiodic functions.

\begin{fact}\label{fact:quasi}
  The polynomial\/ $p$ from Lemma~\mbox{$\ts \ref{lem:ida}$} is
  quasiperiodic, with fundamental frequencies\/ $1$ and\/
  $\lambda$. As such, it can be represented as
\[
       p (k) \, = \, \tilde{p} (x,y)\big|_{x=\lambda k, \, y=k} 
\]   
with\/ $\tilde{p} (x,y) := \ee^{2 \pi \ii (x+y)} \ts 
\bigl(1 + 2 \cos (2 \pi y) \bigr)$,
which is\/ $1$-periodic in both arguments. In this 
representation, one has
\[
    p (\lambda k) \, = \, \tilde{p} \bigl( (x,y) 
    M \bigr)\big|_{x=\lambda k, \, y=k} \ts = \,
    \tilde{p} (x+3 y, x)\big|_{x=\lambda k, \, y=k}
\]      
   with the substitution matrix\/ $M$ from Eq.~\eqref{eq:submat}.
   Likewise, $B (k)$ defines a quasiperiodic matrix function, with\/
   $B (k) = \tilde{B} (x,y)\big|_{x=\lambda k, \, y=k}$ and\/ $\tilde{B}
   (x,y) = \left( \begin{smallmatrix} 1 & 1 \\ \tilde{p} (x,y) & 0
   \end{smallmatrix}\right)$.
\end{fact}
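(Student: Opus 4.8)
The plan is to verify Fact~\ref{fact:quasi} directly from the explicit form of $p$ and $B$ obtained in Lemma~\ref{lem:ida}, using nothing more than the definition of a quasiperiodic function and the action of the substitution matrix $M$ on the frequency lattice. Recall from Lemma~\ref{lem:ida} that
\[
    p(k) \, = \, \ee^{2\pi\ii k(\lambda+1)}\bigl(1 + 2\cos(2\pi k)\bigr),
\]
so the two frequencies appearing are $\lambda k$ and $k$, that is, the fundamental frequencies $\lambda$ and $1$. First I would introduce the two-variable lift $\tilde{p}(x,y) := \ee^{2\pi\ii(x+y)}\bigl(1+2\cos(2\pi y)\bigr)$ and observe that it is manifestly $1$-periodic in each of $x$ and $y$ separately (the exponential picks up $\ee^{2\pi\ii} = 1$ under $x\mapsto x+1$ or $y\mapsto y+1$, and the cosine is $1$-periodic in $y$). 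Setting $x = \lambda k$, $y = k$ then recovers $p(k)$ on the nose, since $\lambda k + k = (\lambda+1)k$. This establishes the claimed representation.

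Next I would compute $p(\lambda k)$. The point is that replacing $k$ by $\lambda k$ sends the frequency pair $(\lambda k, k)$ to $(\lambda^2 k, \lambda k)$, and since $\lambda^2 = \lambda + 3$ (from the characteristic polynomial $x^2 = x+3$), we have $\lambda^2 k = \lambda k + 3k$. Thus the new frequency pair is $(\lambda k + 3k,\, \lambda k)$, which is obtained from $(x,y) = (\lambda k, k)$ by the linear map $(x,y)\mapsto(x+3y,\,x)$. A one-line check shows that this is precisely the map $(x,y)\mapsto (x,y)M$ with $M = \left(\begin{smallmatrix}1&1\\3&0\end{smallmatrix}\right)$ acting on row vectors: indeed $(x,y)\left(\begin{smallmatrix}1&1\\3&0\end{smallmatrix}\right) = (x+3y,\,x)$. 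Hence $p(\lambda k) = \tilde{p}\bigl((x,y)M\bigr)\big|_{x=\lambda k,\,y=k} = \tilde{p}(x+3y,\,x)\big|_{x=\lambda k,\,y=k}$, as stated. The matrix statement for $B$ follows immediately, because by Eq.~\eqref{eq:B-explicit} the only non-constant entry of $B(k)$ is the $(1,0)$-entry $p(k)$, so one simply sets $\tilde{B}(x,y) = \left(\begin{smallmatrix}1&1\\\tilde{p}(x,y)&0\end{smallmatrix}\right)$ and the same substitution works entrywise.

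There is no serious obstacle here; the only thing to be a little careful about is the bookkeeping of whether $M$ acts on the left or on the right and on column or row vectors, since the two frequencies $\lambda k$ and $k$ have to be matched to the correct rows/columns of $M$ so that the relation $\lambda\cdot(\lambda,1) = (\lambda,1)\cdot\text{(something involving }M)$ comes out consistently with the convention already fixed in Eq.~\eqref{eq:submat}. Concretely, the identity being used is that the vector of frequencies $(\lambda, 1)$ satisfies $(\lambda,1)M = (\lambda+3,\lambda) = \lambda\,(\lambda+3)/\lambda \cdots$ — more cleanly, $(\lambda,1)M^{t}$ versus $(\lambda,1)M$ — so I would state the convention explicitly (row vector times $M$ on the right) and then the displayed formula $\tilde{p}(x+3y,x)$ is just the readout of that. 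Everything else is the elementary observation that $\lambda^2 = \lambda+3$.
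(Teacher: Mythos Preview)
Your proposal is correct and follows essentially the same approach as the paper's own proof, which is very brief: the paper simply notes that the first claim is clear and that the second follows from $(\lambda,1)$ being the left PF eigenvector of $M$, with the consequence for $B$ being obvious. You have spelled out the same argument in more detail, including the explicit verification that $(x,y)M = (x+3y,x)$ and the use of $\lambda^2 = \lambda+3$; the only cosmetic wrinkle is the slightly muddled aside about $(\lambda,1)M$ in your final paragraph, which you could tidy by writing directly $(\lambda,1)M = (\lambda+3,\lambda) = (\lambda^2,\lambda) = \lambda(\lambda,1)$.
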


\begin{proof}
  The first claim is clear, while the second follows from the fact
  that $(\lambda , 1)$ is the left PF eigenvector of $M$; this
  relation motivated the particular representation we chose.

The consequence for $B$ is now obvious.
\end{proof}

\subsection{Kronecker products}
Let us consider the matrix function
$A (k) = B(k) \otimes \overline{B(k)}$ introduced above in
Eq.~\eqref{eq:A-def}, with $B(k)$ as in Eq.~\eqref{eq:B-explicit}.
This somewhat unusual product with complex
conjugation reflects, on the Fourier side, the symmetry relation
$\nu^{}_{ji} (z) = \nu^{}_{ij} (-z)$ from
Proposition~\ref{prop:reno-eqs}.  So, consider the matrix family
$\{ {A} (k) : k\in\RR\}$ or any (possibly finite, but sufficiently
large) subset of it. Even though the IDA $\cB$ is irreducible, this
does \emph{not} transfer to the ${A}$-matrices, as is well-known from
representation theory.  Indeed, let $V=\CC^{2}$ and consider
$W:= V\otimes^{}_{\CC} V$, the (complex) tensor product. $W$ is a
vector space over $\CC$ of dimension $4$, but also one over $\RR$,
then of dimension $8$.

Now, view $W$ as an $\RR$-vector space and consider the involution
$C\!  : \, W \longrightarrow W$ defined by
\[
   x \otimes y \; \longmapsto \; C (x\otimes y)
   := \overline{y\otimes x} = \bar{y} \otimes \bar{x} 
\]
together with its unique extension to an $\RR$-linear mapping on $W$.
Note that there is no $\CC$-linear extension, because $C\bigl( a (x\otimes
y)\bigr) = \bar{a}\, C(x\otimes y)$ for $a\in\CC$.  From the
definition, one finds
\[
\begin{split}
  {A} (k)\, C(x\otimes y) \, & = \, 
   \bigl(B(k) \otimes \,\overline{\! B(k)\! }\, \bigr)
   ( \bar{y} \otimes \bar{x} ) \, = \, 
    \bigl(B(k)\ts \bar{y}\bigr) \otimes 
    \bigl(\, \overline{\! B(k)\ts  x \nts} \, \bigr) \\
   & = \, C \left(   \bigl(B(k) \otimes \, 
       \overline{\! B(k)\! }\, \bigr)
       (x \otimes y ) \right) \, = \, 
    C \bigl( {A} (k) \, ( x \otimes y ) \bigr),
\end{split}
\]
so $C$ commutes with the linear map defined by ${A} (k)$, for
any $k\in\RR$.  The eigenvalues of $C$ are $\pm 1$, and our
vector space splits as $W \! = W_{\! +} \oplus W_{\! -}$ into real
vector spaces that are the eigenspaces of $C$, so $W^{}_{\!
  \pm} = \{ x \in W : C(x) = \pm\ts x \}$.  Their dimensions
are
\[
   \dim^{}_{\RR} (W_{\! +}) \, = \,  \dim^{}_{\RR} (W_{\! -}) \, = \, 4
\]
since $W_{\! -} = \ii \ts W_{\! +}$ with $W_{\! +} \cap W_{\! -} = \{
0 \}$.  The corresponding splitting of an arbitrary $w\in W$ is unique
and given by $w = \frac{1}{2} \bigl( w + C(w)\bigr) + \frac{1}{2}
\bigl( w - C(w)\bigr)$ as usual.

It is now clear that $W_{\! +}$ and $W_{\! -}$ are invariant (real)
subspaces of our matrix family, and of the $\RR$-algebra ${\cA}$
generated by it. Fortunately, due to the symmetry relation for the
correlation coefficients, the irreducibility of $\cA$ on the subspace
$W_{\! +}$ is what matters later on.

Let us pause to observe a connection with the digit matrices.
We have
\[
   {A} (k) \, = \, B(k) \otimes \overline{B (k)} \; = \!
   \sum_{x,y \in S_{T}}\! \ee^{2 \pi \ii k (x-y)} \, D_{x} \otimes D_{y}
   \; = \sum_{z\in S_{T} - S_{T}}  \ee^{2 \pi \ii k z}\, F_{z}
\]
where 
\[
   F_{z} \; = \sum_{\substack{x,y \in S_{T} \\ x-y = z}} 
     D_{x} \otimes D_{y} \ts .
\]
In particular, one has
\[
   F^{}_{0} \, = \, \begin{pmatrix} 1 & 1 & 1 & 1 \\
   0 & 0 & 0 & 0 \\ 0 & 0 & 0 & 0 \\ 3 & 0 & 0 & 0 \end{pmatrix} 
\]
with spectrum $\{\lambda,\lambda',0,0\}$, while all $F_{z}$ with $0\ne
z\in S_{T}- S_{T}$ are nilpotent, with $F_{z}^{2}=0$.  Now, one easily
checks that $\bigl(C (a\ts F_{z})\ts C\bigr) (u\otimes v) = \bar{a}\,
F_{-z} (u\otimes v)$, which implies $\bigl[C, {A}(k)\bigr]=0$ for all
$k\in\RR$, in line with our previous derivation.

\begin{lemma}\label{lem:irred}
  The\/ $\RR$-algebra\/ ${\cA}$ satisfies\/ $\dim^{}_{\RR}
  ({\cA}) = 16$ and acts irreducibly on each of the invariant
  four-dimensional subspaces\/ $W_{\! +}$ and\/ $W_{\! -}$ 
  introduced above.
\end{lemma}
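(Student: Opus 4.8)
The plan is to exploit the fact that $\cA$ is the image of the group/semigroup algebra generated by the $A(k)$, together with what we already know about the IDA $\cB = \mathrm{Mat}(2,\CC)$ from Lemma~\ref{lem:ida}. The key structural observation is that the complex algebra generated by $\{A(k) : k \in \RR\}$ inside $\mathrm{Mat}(4,\CC) \cong \mathrm{Mat}(2,\CC) \otimes \mathrm{Mat}(2,\CC)$ contains $B(k_1) \otimes \overline{B(k_2)}$ for all $k_1, k_2$, hence all of $\mathrm{Mat}(2,\CC) \otimes \overline{\mathrm{Mat}(2,\CC)} = \mathrm{Mat}(4,\CC)$. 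This works because, as in the last part of Lemma~\ref{lem:ida}, picking $k_1, k_2$ with $p(k_1) \neq p(k_2)$ lets us solve for $D_0 \otimes D_0$, $D_0 \otimes D_\lambda$, $D_\lambda \otimes D_0$ and $D_\lambda \otimes D_\lambda$ as linear combinations of $A(k_1), A(k_2), A(k_3)$ (a Vandermonde-type argument in the three values of $p$), and products of these generate $E_{ij} \otimes E_{kl}$ for all indices, i.e.\ a basis of $\mathrm{Mat}(4,\CC)$. So the $\CC$-algebra generated by the $A(k)$ is all of $\mathrm{Mat}(4,\CC)$, which has complex dimension $16$, hence real dimension $32$.

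Now the real algebra $\cA$ is strictly smaller, because $C$ commutes with every $A(k)$ and hence with every element of $\cA$, whereas $C$ is only $\RR$-linear. I would argue that $\cA$ equals the real subalgebra $\{\, M \in \mathrm{Mat}(4,\CC) : MC = CM \,\}$, equivalently the $\RR$-linear endomorphisms of $W$ commuting with the involution $C$. One inclusion is immediate from $[C, A(k)] = 0$. For the reverse inclusion, I would use the decomposition $W = W_+ \oplus W_-$ with $W_- = \ii\, W_+$: an $\RR$-linear map commuting with $C$ preserves $W_\pm$, and since $W_- = \ii W_+$ it is determined by its restriction to $W_+ \cong \RR^4$, giving a real algebra of dimension at most $16$. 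To see that $\cA$ attains this, note that the $\CC$-span of $\cA$ is all of $\mathrm{Mat}(4,\CC)$ (from the previous paragraph, since $\cA$ already contains the $A(k)$), so $\cA + \ii\,\cA = \mathrm{Mat}(4,\CC)$; combined with $\cA \subseteq \{M : MC = CM\}$ and a dimension count ($\dim_\RR \cA + \dim_\RR(\ii\,\cA \cap \{M : MC=CM\}^{\perp}\text{-part}) = 32$, with the commutant and anticommutant each $16$-dimensional), this forces $\dim_\RR \cA = 16$ and $\cA = \{M : MC = CM\}$.

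Given $\dim_\RR \cA = 16$ and $\cA = \mathrm{End}_{\RR}(W)^C$, irreducibility on $W_+$ (and $W_-$) follows from the standard fact that the full commutant-of-$C$ algebra, viewed via its action on $W_+ \cong \RR^4$, is $\mathrm{Mat}(4,\RR)$ — explicitly, restriction to $W_+$ gives an $\RR$-algebra isomorphism $\cA \to \mathrm{Mat}_{\RR}(W_+) \cong \mathrm{Mat}(4,\RR)$ (injective because a $C$-commuting map is determined by its action on $W_+$, and dimensions match), and $\mathrm{Mat}(4,\RR)$ acts irreducibly on $\RR^4$. The same applies to $W_-$ since $w \mapsto \ii w$ conjugates the action on $W_+$ to that on $W_-$.

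The main obstacle I expect is the bookkeeping in the second paragraph: showing that $\cA$ exhausts the full $C$-commutant rather than landing in a proper subalgebra of it. The subtlety is that generating the $D_x \otimes D_y$ with \emph{complex} coefficients is easy, but $\cA$ only allows real coefficients, and one must check that the real linear combinations of $A(k)$'s and their products already fill out the $16$ real dimensions — the cleanest way is the dimension-count sandwich $\cA \subseteq \{M : MC=CM\}$, $\dim_\RR \cA \geq \tfrac12 \dim_\RR(\cA + \ii\cA) = \tfrac12 \cdot 32 = 16$, forcing equality; here $\dim_\RR(\cA + \ii\cA) = 32$ uses that the complexification is all of $\mathrm{Mat}(4,\CC)$, which is exactly the content of the first paragraph.
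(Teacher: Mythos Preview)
Your argument is correct and takes a genuinely different route from the paper. One small slip: extracting the four tensors $D_{0}\otimes D_{0}$, $D_{0}\otimes D_{\lambda}$, $D_{\lambda}\otimes D_{0}$, $D_{\lambda}\otimes D_{\lambda}$ from $\CC$-linear combinations of the $A(k)$ requires \emph{four} values of $k$ (generically), not three, since these four matrices are linearly independent in $\mathrm{Mat}(4,\CC)$; the coefficient vector attached to $A(k)$ is $(1,\overline{p(k)},p(k),\lvert p(k)\rvert^{2})$, and one needs a $4\!\times\!4$ system. With that fix, your first paragraph correctly shows that the $\CC$-algebra generated by $\{A(k):k\in\RR\}$ is all of $\mathrm{Mat}(4,\CC)$, and then your dimension sandwich --- $\dim^{}_{\RR}\cA\geqslant 16$ from $\cA+\ii\ts\cA=\mathrm{Mat}(4,\CC)$, combined with $\cA\subseteq\{M:MC=CM\}$ of real dimension $16$ --- pins down $\cA$ exactly as the commutant of $C$, hence $\cA\simeq\mathrm{Mat}(4,\RR)$ via restriction to $W_{\!+}$.

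The paper proceeds differently: it asserts that $\cA$ equals the fixed-point set of the involution $M_{1}\otimes M_{2}\mapsto\overline{M_{2}}\otimes\overline{M_{1}}$ (which is precisely the commutant of $C$), writes down an explicit spanning set for that $16$-dimensional space, and then exhibits a concrete unitary $U$ that conjugates it onto $\mathrm{Mat}(4,\RR)\subset\mathrm{Mat}(4,\CC)$. Your abstract argument is cleaner and actually supplies the step the paper treats as evident --- namely, that $\cA$ \emph{fills out} the full commutant rather than a proper subalgebra of it. On the other hand, the paper's explicit $U$ is not wasted effort: it is used immediately afterwards to define the real matrix function $A^{}_{U}(k)=U A(k)\,U^{-1}$, which drives the positivity and growth estimates in Proposition~\ref{prop:simple-explode} and its corollary. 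So your route proves the lemma more transparently, while the paper's route is tailored to the concrete computations that follow.
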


\begin{proof}
  We will show that ${\cA} \simeq \mathrm{Mat} (4,\RR)$.  If one
  considers $\mathrm{Mat}(4,\CC)$ as an $\RR$-algebra of dimension
  $32$, the subalgebra ${\cA}$ consists of all elements that are fixed
  under the $\RR$-linear mapping defined by
  $M_{1}\otimes M_{2}\mapsto \overline{M_{2}}\otimes\overline{M_{1}}$.
  Consequently, with $E_{ij,k\ell} := E_{ik}\otimes E_{j\ell}$, a
  basis of the $\RR$-algebra ${\cA}$ as a $16$-dimensional real vector
  space can be constructed from the spanning set
\[
   \bigl\{\tfrac{1}{2}(E_{ij,k\ell}+E_{ji,\ell k}): i,j,k,\ell
    \in \{ 0,1 \} \bigr\} \cup 
   \bigl\{\tfrac{\ii}{2}(E_{ij,k\ell}-E_{ji,\ell k}): 
    i,j,k,\ell \in \{ 0,1 \} \bigr\},
\]
where the first resp.\ second subset contains $10$ resp.\ $6$ linearly
independent elements.

Next, consider the unitary matrix
\begin{equation}\label{eq:umat}
    U \, = \, \frac{1}{\sqrt{2}}
    \begin{pmatrix}1\!-\!\ii & 0 & 0 & 0 \\ 0 & 1 & -\ii & 0 \\
     0 & -\ii & 1 & 0 \\ 0 & 0 & 0 & 1\!-\!\ii\end{pmatrix},
\end{equation}
which acts on $\mathrm{Mat}(4,\CC)$ via conjugation $(.)\mapsto
U(.)\ts\ts U^{-1}$. When applied to the basis matrices of ${\cA}$, one
obtains $16$ real matrices which, by suitable integer linear
combinations, yield all $16$ elementary matrices
$E_{ij,k\ell}$. Therefore, within $\mathrm{Mat}(4,\CC)$, ${\cA}$ is
conjugate to $\mathrm{Mat}(4,\RR)$. This implies that the action of
${\cA}$ on the invariant subspaces $W_{\! +}$ and $W_{\! -}$ is
irreducible.
\end{proof}

\subsection{Consequences}
Observe next that $[U,{A}(0)]=0$ and that $U(W^{}_{\!\mp})= \frac{1
  {\scriptscriptstyle\pm} \ii}{\sqrt{2}}\, \RR^{4}$. Moreover,
\[
    {A}^{}_{U}(k) \, := \, 
    U {A}(k)\, U^{-1} \, = \, \begin{pmatrix}
    1 & 1 & 1 & 1\\
    c(k)+s(k) & s(k) & c(k) & 0\\
    c(k)-s(k) & c(k) & -s(k) & 0\\
    c(k)^2+s(k)^2 & 0 & 0 & 0 \end{pmatrix}
\]
with
\[
\begin{split}
   c(k) & \,=\, \cos(2\pi \lambda k) + 
     \cos(2\pi (\lambda \! + \! 1) k) + \cos(2\pi
   (\lambda \! + \! 2) k)\ts ,\\
  s(k)&\, =\, \sin(2\pi \lambda k) \ts +\ts 
     \sin(2\pi (\lambda \! + \! 1)k) \ts +\ts 
  \sin(2\pi (\lambda \! + \! 2) k)\ts ,
\end{split}
\]
so that $c(k)^2+s(k)^2= \bigl| p(k) \bigr|^{2} = \bigl(1+2\cos(2\pi
k)\bigr)^2$ and ${A}^{}_{U}(0)={A}(0)=M\otimes M$ with the
substitution matrix $M$ from Eq.~\eqref{eq:submat}. Since
${A}^{}_{U}(0)^{2}$ is a strictly positive integer matrix, and since
${A}^{}_{U} (\frac{k}{\lambda})\ts {A}^{}_{U}(k)$ is continuous in
$k$, the latter product is strictly positive as well for sufficiently
small $k$. As one can easily calculate numerically, the smallest $k>0$
for which one element of ${A}^{}_{U}(\frac{k}{\lambda}) \ts
{A}^{}_{U}(k)$ vanishes (and strict positivity is thus violated) is $k
\approx 0.03832 \ts (1)$, so the product is certainly a strictly
positive matrix for all $k\in [0,\varepsilon]$ with $\varepsilon =
0.03$ say.  Moreover, ${A}^{}_{U}(k)$ cannot have zero eigenvalues for
any $k$ with $\lvert k\rvert<\frac{1}{3}$, as follows from a simple
determinant argument (see the proof of Lemma~\ref{lem:eps-suffice}
below for details).

\begin{prop}\label{prop:simple-explode}
  Let\/ $k\in [0,\varepsilon]$ with this choice of\/ $\varepsilon$,
  and consider the iteration
\[
   w^{}_{n} \, := \,
   \Bigl({A}^{}_{U}\bigl(\tfrac{k}{\lambda^{2n-1}}\bigr)  \ts
   {A}^{}_{U}\bigl(\tfrac{k}{\lambda^{2n-2}}\bigr) \Bigr) \cdot \ldots 
   \cdot \Bigl( {A}^{}_{U}\bigl(\tfrac{k}{\lambda}\bigr) \ts
   {A}^{}_{U}\bigl(k\bigr) \Bigr)\ts w^{}_{0}
\]
for\/ $n\geqslant 1$ and any non-negative starting vector $w^{}_{0}
\ne 0$. Then, the vector $w^{}_{n}$ will be strictly positive for
all\/ $n\in\NN$ and, as\/ $n\to\infty$, it will diverge with asymptotic
growth\/ $c \ts \lambda^{4n} \ts w^{}_{\mathrm{PF}}$. Here, $c$ is a
constant that depends on\/ $w^{}_{0}$ and\/ $k$, while\/
$w^{}_{\mathrm{PF}} = v^{}_{\mathrm{PF}} \otimes v^{}_{\mathrm{PF}}$
is the statistically normalised PF eigenvector of\/ $M\otimes M$
derived from Eq.~\eqref{eq:lettfreq}.
\end{prop}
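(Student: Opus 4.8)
The plan is to interpret the product in \(w^{}_{n}\) as a sequence of applications of primitive-like nonnegative matrices, so that a Perron--Frobenius-type argument governs the asymptotics. First I would observe that, since \(A^{}_{U}(k)\) has all nonnegative entries only at \(k = 0\), one cannot work directly with the individual factors; instead the correct building block is the \emph{pair} \(P(k) := A^{}_{U}\bigl(\tfrac{k}{\lambda}\bigr)\ts A^{}_{U}(k)\), which, by the remark preceding the proposition, is a strictly positive matrix for all \(k \in [0,\varepsilon]\). Thus \(w^{}_{n} = P\bigl(\tfrac{k}{\lambda^{2n-2}}\bigr)\cdots P\bigl(\tfrac{k}{\lambda}\bigr)\,P(k)\,w^{}_{0}\) (re-indexing the nested arguments so the outermost factor is \(P(k/\lambda^{2n-2})\)), a left product of strictly positive matrices. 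Since \(w^{}_{0}\ge 0\) is nonzero, already \(w^{}_{1} = P(k)\,w^{}_{0}\) is strictly positive, and applying a strictly positive matrix to a strictly positive vector keeps it strictly positive; this gives the first assertion by a trivial induction.

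For the growth rate, the key point is that the arguments \(k/\lambda^{2n-j}\) converge to \(0\) at a geometric rate, so the factors \(P(k/\lambda^{2m})\) converge to \(P(0) = A^{}_{U}(0)^{2} = (M\otimes M)^{2}\), whose Perron--Frobenius eigenvalue is \(\lambda^{4}\) with eigenvector \(w^{}_{\mathrm{PF}} = v^{}_{\mathrm{PF}}\otimes v^{}_{\mathrm{PF}}\) (the normalised PF eigenvector of \(M\otimes M\) by the tensor structure, cf.\ the discussion around Eq.~\eqref{eq:sap}). I would then normalise: set \(\widehat{w}^{}_{n} := w^{}_{n}/\|w^{}_{n}\|\) and \(\widehat P(k) := P(k)/\|P(k)\|\) or, more cleanly, track \(w^{}_{n}/\lambda^{4n}\). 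The standard tool here is a contraction argument in the Hilbert (projective) metric on the positive cone: each strictly positive matrix contracts this metric by a factor strictly less than \(1\), and because the factors \(P(k/\lambda^{2m})\) converge to the fixed primitive matrix \(P(0)\), the contraction coefficients are uniformly bounded away from \(1\). This forces the directions \(\widehat w^{}_{n}\) to converge to the Perron direction \(w^{}_{\mathrm{PF}}\), and a standard telescoping estimate on the logarithm of the norms, using \(\|P(k/\lambda^{2m})\| = \lambda^{4}\bigl(1 + O(\lambda^{-2m})\bigr)\) and the summability of the error terms, yields \(w^{}_{n} \sim c\,\lambda^{4n}\,w^{}_{\mathrm{PF}}\) with \(c = c(w^{}_{0},k) > 0\).

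The main obstacle is making the convergence of the projective directions and of the scalar norms genuinely quantitative and uniform in \(k \in [0,\varepsilon]\): one needs that the Birkhoff contraction coefficients of the \(P(k/\lambda^{2m})\) are bounded by some \(\theta < 1\) uniformly (which follows from continuity of \(P\) on the compact interval \([0,\varepsilon]\) together with strict positivity there, giving a uniform lower bound on all entries), and one needs the perturbation estimate \(P(k/\lambda^{2m}) = P(0) + O(\lambda^{-2m})\) to be uniform as well (immediate from \(C^{1}\)-smoothness of \(A^{}_{U}\)). Once these two uniform bounds are in hand, the argument is the classical inhomogeneous Perron--Frobenius / ergodic-product estimate; I would relegate the elementary but lengthy telescoping of \(\log\|w^{}_{n}\| - 4n\log\lambda\) to Appendix~B, where the proposition announces a scalar model of precisely this renormalisation mechanism. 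The positivity of the limiting constant \(c\) is not automatic and must be checked: it follows because the projective limit \(w^{}_{\mathrm{PF}}\) lies in the interior of the cone and the accumulated norm factors form a convergent (hence positive and finite) infinite product.
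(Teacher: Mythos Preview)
Your approach is essentially the same as the paper's: group the factors into strictly positive pairs $P(k)=A^{}_{U}(k/\lambda)\,A^{}_{U}(k)$, use strict positivity to get $w^{}_{1}>0$ and hence $w^{}_{n}>0$, then exploit $P(k/\lambda^{2m})\to (M\otimes M)^{2}$ together with the spectral gap of the limit to force the $\lambda^{4n}\,w^{}_{\mathrm{PF}}$ asymptotics. The paper's proof is in fact much briefer than yours --- it simply invokes ``no cancellation'' plus the spectral gap of $A(0)^{2}$ without naming the Hilbert projective metric --- so your version supplies the standard mechanism that makes that sketch precise. Two minor slips: your displayed product should read $P(k/\lambda^{2n-2})\cdots P(k/\lambda^{2})\,P(k)$ (arguments step by $\lambda^{2}$, not $\lambda$), and Appendix~B in the paper handles a different scalar renormalisation (the $h(x/\lambda)=\lambda\,h(x)$ integrability argument), not the telescoping you describe.
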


\begin{proof}
  For $k=0$, this is standard Perron--Frobenius theory for the
  non-negative and primitive matrix ${A} (0) = M \otimes M$, which has
  PF eigenvalue $\lambda^{2}$ with eigenvector
  $w^{}_{\mathrm{PF}}$. In particular, $ A (0)^2$ is
  strictly positive.  Since $w^{}_{0}$ must have a component in the
  direction of $w^{}_{\mathrm{PF}}$, this component leads to the
  asymptotic behaviour claimed.

  For $0 < k \leqslant \varepsilon$, one iterates with strictly
  positive matrices due to the choice of $\varepsilon$, so
  $w^{}_{0} \ne 0$ means that $w^{}_{1}$ is already strictly
  positive. Clearly,
  ${A}^{}_{U} (\frac{k}{\lambda^{2n-1}}) \ts {A}^{}_{U}
  (\frac{k}{\lambda^{2n-2}})$
  is analytic in $k$ and converges to ${A} (0)^{2}$ as $n\to \infty$.
  No cancellation can occur in the iteration and, asymptotically, each
  iteration step multiplies the component in the direction of
  $w^{}_{\mathrm{PF}}$ by $\lambda^{4}$. Due to the existence of a
  spectral gap for ${A} (0)^{2}$, this component dominates, and the
  claim follows.
\end{proof}

Let us formulate a simple consequence that will be useful in our
later analysis.

\begin{coro}\label{coro:simple-explode}
  Under the assumptions and in the setting of
  Proposition~\emph{\ref{prop:simple-explode}}, consider the iteration
\[
    w^{\ts\prime}_{m} \, = \, \myfrac{1}{\lambda} \ts
    {A}^{}_{U} \bigl( \tfrac{k^{}_{0}}{\lambda^{m}} \bigr)
    \, w^{\ts\prime}_{m-1}
\]
for\/ $m \in \NN$, with\/ $k^{}_{0} \in [0,\varepsilon]$ and\/
$w^{\ts\prime}_{0} = w^{}_{0}$.  Then, $w^{\ts\prime}_{m} \sim c\ts
\lambda^{m} \, w^{}_{\mathrm{PF}}$ as\/ $m \to \infty$, where the
constant\/ $c$ depends on\/ $w^{}_{0}$ and\/ $k^{}_{0}$ as before. In
other words, when viewing\/ $w^{\ts \prime}_{m}$ as a function\/
$w^{\ts \prime}$ evaluated at\/ $k = k^{}_{0}/\lambda^{m}$, one has
\[
      w^{\ts \prime} (k) \, \sim \, \myfrac{c}{k} \,
      w^{}_{\mathrm{PF}}
      \quad \text{as $k \to 0$}
\]
along the sequence\/ $\bigl( k^{}_{0}/ \lambda^{m} 
\bigr)_{m\in \NN_{0}}$.  \qed
\end{coro}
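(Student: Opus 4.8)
The plan is to reduce the one-step iteration for $w^{\ts\prime}_{m}$ to the paired iteration treated in Proposition~\ref{prop:simple-explode}, and then to remove the resulting parity mismatch by one extra use of the continuity of $A^{}_{U}$ at $k=0$. No new analytic input is needed; the corollary is essentially a bookkeeping consequence of the proposition.

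First I would unfold the recursion into
\[
  w^{\ts\prime}_{m} \, = \, \myfrac{1}{\lambda^{m}}\,
  A^{}_{U}\bigl(\tfrac{k^{}_{0}}{\lambda^{m}}\bigr)\,
  A^{}_{U}\bigl(\tfrac{k^{}_{0}}{\lambda^{m-1}}\bigr) \cdots
  A^{}_{U}\bigl(\tfrac{k^{}_{0}}{\lambda}\bigr)\, w^{}_{0} \, .
\]
Putting $k^{}_{1} := k^{}_{0}/\lambda$, which still lies in $[0,\varepsilon]$ because $\lambda > 1$, and using $k^{}_{0}/\lambda^{j} = k^{}_{1}/\lambda^{j-1}$, this becomes
\[
  w^{\ts\prime}_{m} \, = \, \myfrac{1}{\lambda^{m}}\,
  A^{}_{U}\bigl(\tfrac{k^{}_{1}}{\lambda^{m-1}}\bigr) \cdots
  A^{}_{U}\bigl(\tfrac{k^{}_{1}}{\lambda}\bigr)\, A^{}_{U}(k^{}_{1})\, w^{}_{0} \, .
\]
For even $m = 2n$ the matrix product on the right is precisely the product $w^{}_{n}$ of Proposition~\ref{prop:simple-explode}, evaluated at $k = k^{}_{1}$ with starting vector $w^{}_{0}$, so that $w^{\ts\prime}_{2n} = \lambda^{-2n}\, w^{}_{n}$. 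Since that proposition gives $w^{}_{n} \sim c\, \lambda^{4n}\, w^{}_{\mathrm{PF}}$ with $c = c(w^{}_{0}, k^{}_{1}) \neq 0$, we conclude $w^{\ts\prime}_{2n} \sim c\, \lambda^{2n}\, w^{}_{\mathrm{PF}}$, i.e.\ $w^{\ts\prime}_{m} \sim c\, \lambda^{m}\, w^{}_{\mathrm{PF}}$ along even $m$.

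Next I would treat odd $m = 2n+1$ via $w^{\ts\prime}_{2n+1} = \lambda^{-1}\, A^{}_{U}\bigl(k^{}_{0}/\lambda^{2n+1}\bigr)\, w^{\ts\prime}_{2n}$. As $n\to\infty$ one has $k^{}_{0}/\lambda^{2n+1} \to 0$, and $A^{}_{U}$ is continuous with $A^{}_{U}(0) = A(0) = M\otimes M$; since moreover $w^{\ts\prime}_{2n}/\lambda^{2n} \to c\, w^{}_{\mathrm{PF}}$ and $w^{}_{\mathrm{PF}} = v^{}_{\mathrm{PF}} \otimes v^{}_{\mathrm{PF}}$ is a $\lambda^{2}$-eigenvector of $M\otimes M$, it follows that $w^{\ts\prime}_{2n+1} \sim \lambda^{-1}\cdot\lambda^{2}\, c\, \lambda^{2n}\, w^{}_{\mathrm{PF}} = c\, \lambda^{2n+1}\, w^{}_{\mathrm{PF}}$, so the asymptotics $w^{\ts\prime}_{m} \sim c\, \lambda^{m}\, w^{}_{\mathrm{PF}}$ hold along odd $m$ too, with the same constant. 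Finally, evaluating $w^{\ts\prime}$ at $k = k^{}_{0}/\lambda^{m}$ and using $\lambda^{m} = k^{}_{0}/k$ turns this into $w^{\ts\prime}(k) \sim c\, k^{}_{0}\, w^{}_{\mathrm{PF}}/k$ as $k\to 0$ along $(k^{}_{0}/\lambda^{m})_{m}$; absorbing $k^{}_{0}$ into the constant gives the stated form.

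The only point needing a moment's thought is the parity patch, and even there the argument is forced by continuity of $A^{}_{U}$ at $0$ together with $A(0)\, w^{}_{\mathrm{PF}} = \lambda^{2}\, w^{}_{\mathrm{PF}}$; everything else is reindexing. Alternatively, one can bypass Proposition~\ref{prop:simple-explode} and rerun its Perron--Frobenius argument directly for the single-step cocycle, noting that $A^{}_{U}(k^{}_{0}/\lambda^{j}) \to M\otimes M$ as $j\to\infty$, that no cancellation occurs so the $w^{}_{\mathrm{PF}}$-component eventually gets multiplied by $\lambda^{2}$ at each step, and that the prefactor $1/\lambda$ per step reduces the net growth rate to $\lambda$ per step — this is exactly the scalar renormalisation model discussed in Appendix~B.
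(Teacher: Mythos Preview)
Your proof is correct. The paper gives no argument at all for this corollary (it is marked with a bare \qed), treating it as an immediate consequence of Proposition~\ref{prop:simple-explode}; your write-up simply spells out the reindexing and the even/odd parity patch that make this precise, and both steps are sound.
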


We now have the necessary tools to return to the analysis of the pair
correlation measures and their Fourier transforms.

\section{Renormalisation analysis of pair correlation 
measures}\label{sec:explode}

\subsection{Lebesgue decomposition and consequences}
The measure vector $\widehat{\vU}$ satisfies the renormalisation
equation from Proposition~\ref{prop:ups-FT}. Each measure
$\widehat{\vU}_{ij}$ has a unique Lebesgue decomposition
\[
   \widehat{\vU}_{ij} \, = \, 
   \bigl(\widehat{\vU}_{ij}\bigr)_{\mathsf{pp}} +
   \bigl(\widehat{\vU}_{ij}\bigr)_{\mathsf{cont}} \, = \,
   \bigl(\widehat{\vU}_{ij}\bigr)_{\mathsf{pp}} +
   \bigl(\widehat{\vU}_{ij}\bigr)_{\mathsf{sc}} +
   \bigl(\widehat{\vU}_{ij}\bigr)_{\mathsf{ac}} 
\]
into a pure point and a continuous part, where the latter can further
be decomposed into a singular continuous and an absolutely continuous
part relative to Lebesgue measure.  By standard arguments, this can be
extended to the vector measure $\widehat{\vU}$ in such a way that the
supporting sets of the three parts coincide for all
$\widehat{\vU}_{ij}$.

\begin{lemma}\label{lem:spectral-parts}
  Each spectral component of\/ $\widehat{\vU}$ satisfies the
  renormalisation relation of Proposition~\emph{\ref{prop:ups-FT}}
  separately, which means that
\[
    \bigl(\widehat{\vU}\ts\ts \bigr)_{\mathsf{t}} \; = \; 
    \myfrac{1}{\lambda^2}\, {A} (.) \cdot  
    \bigl(f^{-1}.\,(\widehat{\vU}\ts )^{}_{\mathsf{t}}\bigr)
\]
holds for each spectral type\/ $\mathsf{t} \in \{ \mathsf{pp},
\mathsf{sc}, \mathsf{ac} \}$.
\end{lemma}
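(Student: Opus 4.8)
The plan is to exploit the uniqueness of the Lebesgue decomposition together with the fact that the renormalisation map in Proposition~\ref{prop:ups-FT} is built from a convolution with a purely atomic finite measure and a dilation, both of which preserve each spectral type. First I would recall that the map $T\nts : Y \mapsto \frac{1}{\lambda^2} A(.) \cdot (f^{-1}.\,Y)$ acts componentwise as $Y_{ij} \mapsto \sum_{m,n} A_{ij,mn}(.)\,(f^{-1}.\,Y_{mn})$, and that each entry $A_{ij,mn}(k) = B_{im}(k)\overline{B_{jn}(k)}$ is a trigonometric polynomial, hence the Fourier transform of a finite pure point (indeed finitely supported) measure. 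Concretely, multiplication of a measure $\mu$ by such a trigonometric polynomial corresponds, on the "pre-Fourier" side, to convolution with a finite atomic measure — but it is cleaner to argue directly on the Fourier side: multiplying a measure by a bounded continuous function, and pushing a measure forward under the invertible affine map $f^{-1}$, both send pure point measures to pure point measures, absolutely continuous measures to absolutely continuous measures, and singular continuous measures to singular continuous measures, because none of these operations creates or destroys atoms or changes null sets.

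The key step is therefore the following elementary lemma, which I would state and prove in one or two lines: if $\mu$ is a transformable measure with Lebesgue decomposition $\mu = \mu_{\mathsf{pp}} + \mu_{\mathsf{sc}} + \mu_{\mathsf{ac}}$, if $g$ is a bounded continuous function, and if $f$ is an invertible affine map, then
\[
   g \cdot (f^{-1}.\,\mu) \; = \; g\cdot(f^{-1}.\,\mu_{\mathsf{pp}})
   \,+\, g\cdot(f^{-1}.\,\mu_{\mathsf{sc}})
   \,+\, g\cdot(f^{-1}.\,\mu_{\mathsf{ac}})
\]
is precisely the Lebesgue decomposition of $g\cdot(f^{-1}.\,\mu)$, because $f^{-1}.\,\mu_{\mathsf{pp}}$ is pure point, $f^{-1}.\,\mu_{\mathsf{ac}}$ is absolutely continuous (the push-forward of Lebesgue measure under an affine bijection is a multiple of Lebesgue measure), $f^{-1}.\,\mu_{\mathsf{sc}}$ is continuous but singular (affine bijections preserve Lebesgue-null sets and the absence of atoms), and multiplication by the bounded continuous $g$ preserves each of these three properties. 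Summing finitely many such terms over $m,n$ preserves the spectral type as well, since a finite sum of pure point (resp.\ absolutely continuous, resp.\ singular continuous) measures is again of that type, and the three parts live on mutually disjoint carrier sets that can be chosen simultaneously for the whole vector $\widehat{\vU}$ as already noted before the lemma.

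Applying this to the identity $\widehat{\vU} = T(\widehat{\vU})$ and decomposing the left-hand side gives
\[
   (\widehat{\vU})_{\mathsf{pp}} + (\widehat{\vU})_{\mathsf{sc}}
   + (\widehat{\vU})_{\mathsf{ac}} \; = \;
   T\bigl((\widehat{\vU})_{\mathsf{pp}}\bigr) +
   T\bigl((\widehat{\vU})_{\mathsf{sc}}\bigr) +
   T\bigl((\widehat{\vU})_{\mathsf{ac}}\bigr),
\]
where, by the lemma, $T\bigl((\widehat{\vU})_{\mathsf{t}}\bigr)$ is of pure spectral type $\mathsf{t}$ for each $\mathsf{t}$. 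Since the Lebesgue decomposition of the left-hand side is unique, matching the three mutually singular parts on both sides yields $(\widehat{\vU})_{\mathsf{t}} = T\bigl((\widehat{\vU})_{\mathsf{t}}\bigr)$ for $\mathsf{t}\in\{\mathsf{pp},\mathsf{sc},\mathsf{ac}\}$, which is the claim. The only point requiring a modicum of care — and the one I would flag as the main (minor) obstacle — is that the Lebesgue decomposition and the operations involved are applied to a \emph{vector} of measures, so one must first fix common supporting sets for the pp/sc/ac parts of all four components $\widehat{\vU}_{ij}$ (possible because there are finitely many of them), and then verify that the renormalisation map maps each such "joint" spectral component into itself; this is routine given that $A(k)$ has only finitely many entries and a single common dilation $f$ is used throughout.
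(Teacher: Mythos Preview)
Your argument is correct and follows essentially the same route as the paper's own proof: both rely on the observation that dilation by $f^{-1}$ and multiplication by the (analytic, hence bounded continuous) entries of $A(.)$ each preserve spectral type, so that the uniqueness of the Lebesgue decomposition forces the renormalisation identity to hold componentwise. The paper states this in two sentences, while you spell out the details more fully (including the vector-of-measures point), but the underlying idea is identical.
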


\begin{proof}
  Observe that the dilated measure $f^{-1}\nts . \ts \mu$ has the same
  spectral type as $\mu$, and that the matrix function ${A} (k)$ is
  analytic, so cannot mix different spectral types, which are mutually
  orthogonal in the measure-theoretic sense; compare
  \cite[Prop.~8.4]{TAO}.  Consequently, for any $\mathsf{t} \in \{
  \mathsf{pp}, \mathsf{sc}, \mathsf{ac} \}$, one has
\[
   \Bigl( \myfrac{1}{\lambda^2}\, {A} (.) \cdot  
    \bigl(f^{-1}\nts .\ts \widehat{\vU}
    \ts\ts \bigr)\!\Bigr)_{\! \mathsf{t}}
   \, = \, \myfrac{1}{\lambda^2}\, {A} (.) \cdot  
    \left(f^{-1}\nts .\,(\widehat{\vU}\ts 
    )^{}_{\mathsf{t}}\right).
\]
The claim now follows from the linearity of the renormalisation
relation for $\widehat{\vU}$.
\end{proof}

Let us next observe that we have $\widehat{\vU}_{ij} =
\widehat{\vU_{ij}}$ by definition, and that $\bigl( \widehat{\vU}_{ij}
\bigr)$ is Hermitian as a matrix, because
\[
    \overline{\widehat{\vU_{ij}}} \, = \,  
    \widehat{\widetilde{\vU_{ij}}} \, = \,  
    \widehat{\vU_{ji}} \ts .
\]
If we combine this with Eq.~\eqref{eq:u-FT} for
$\widehat{\gamma^{}_{u}}$, which is a positive measure for any complex
weight vector $u$, we get the following property.

\begin{fact}\label{fact:Hermitian}
  For any bounded Borel set\/ $\cE\subset \RR$, the complex matrix\/ $
  \bigl( \widehat{\vU}_{ij} (\cE) \bigr)_{0 \leqslant i,j \leqslant
    1}$ is Hermitian and positive semi-definite. Since\/
  $\widehat{\vU}^{}_{\!00}$ and\/ $\widehat{\vU}^{}_{\!11}$ are
  positive measures, positive semi-definiteness is equivalent to the
  determinant condition\/ $ \det \bigl( \widehat{\vU}_{ij} ( \cE)
  \bigr)^{\vphantom{I}} \geqslant 0$.  \qed
\end{fact}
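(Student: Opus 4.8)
The plan is to derive all three assertions from the matrix identity $\overline{\widehat{\vU_{ij}}} = \widehat{\vU_{ji}}$ recorded just above, from Equation~\eqref{eq:u-FT}, and from the positivity of the diffraction measure established in Proposition~\ref{prop:hull-2}. First, for the Hermiticity of the matrix $\bigl(\widehat{\vU}_{ij}(\cE)\bigr)$, I would recall that $\widehat{\vU}_{ij} = \widehat{\vU_{ij}}$ and that the relation $\overline{\widehat{\vU_{ij}}} = \widehat{\widetilde{\vU_{ij}}} = \widehat{\vU_{ji}}$ holds as an identity of (translation bounded) measures. Evaluating it on a bounded Borel set $\cE$ --- on which each $\widehat{\vU}_{ij}$ takes a well-defined finite complex value, since it is translation bounded by Fact~\ref{fact:FT} --- yields $\overline{\widehat{\vU}_{ij}(\cE)} = \widehat{\vU}_{ji}(\cE)$, which is precisely the Hermiticity of the matrix.

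Next, for positive semi-definiteness, I would fix an arbitrary weight vector $u = (u^{}_{0},u^{}_{1})^{t} \in \CC^{2}$ and the associated weighted Dirac comb $\omega$ of Eq.~\eqref{eq:comb}. By Proposition~\ref{prop:hull-2}, its diffraction $\widehat{\gamma^{}_{u}}$ is a positive measure, hence $\widehat{\gamma^{}_{u}}(\cE) \geqslant 0$ for every bounded Borel set $\cE$. Substituting this into Eq.~\eqref{eq:u-FT} and dividing by the positive constant $\dens(\vL)$ gives $\sum_{i,j} \overline{u^{}_{i}}\, \widehat{\vU}_{ij}(\cE)\, u^{}_{j} \geqslant 0$. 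Since $u$ is arbitrary and the matrix is already known to be Hermitian, this is exactly the definition of positive semi-definiteness of $\bigl(\widehat{\vU}_{ij}(\cE)\bigr)$.

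Finally, for the reformulation as a determinant condition, I would note that a Hermitian $2\times 2$ matrix with non-negative diagonal entries has non-negative trace, so its larger eigenvalue is automatically non-negative; it is therefore positive semi-definite if and only if the product of its eigenvalues, i.e.\ its determinant, is non-negative. Since $\widehat{\vU}_{00}$ and $\widehat{\vU}_{11}$ are positive measures by Fact~\ref{fact:FT}, the diagonal entries $\widehat{\vU}_{00}(\cE)$ and $\widehat{\vU}_{11}(\cE)$ are non-negative, so the positive semi-definiteness established above is equivalent to $\det\bigl(\widehat{\vU}_{ij}(\cE)\bigr) \geqslant 0$ alone, which is the claim.

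I do not anticipate any serious obstacle here; the only point needing a little care is the measure-theoretic bookkeeping --- that the complex, non-finite but translation bounded measures $\widehat{\vU}_{ij}$ may legitimately be evaluated on bounded Borel sets and that the measure identity $\overline{\widehat{\vU_{ij}}} = \widehat{\vU_{ji}}$ together with the linear relation~\eqref{eq:u-FT} survive this pointwise evaluation --- but all of this is guaranteed by the Fourier transformability and translation boundedness recorded in Fact~\ref{fact:FT} together with the Riesz--Markov identification mentioned earlier.
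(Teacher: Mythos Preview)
Your proposal is correct and follows exactly the route the paper takes: the Hermiticity is read off from the identity $\overline{\widehat{\vU_{ij}}}=\widehat{\vU_{ji}}$ displayed just before the Fact, the positive semi-definiteness is obtained by inserting an arbitrary $u\in\CC^{2}$ into Eq.~\eqref{eq:u-FT} and using that $\widehat{\gamma^{}_{u}}$ is a positive measure (Proposition~\ref{prop:hull-2}), and the determinant reformulation is the elementary $2\times 2$ linear algebra you spell out. The paper compresses all of this into the sentence preceding the Fact and the \qed, but the content is identical.
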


\subsection{Renormalisation for pure point part}
Let us further explore the meaning of the relation from
Lemma~\ref{lem:spectral-parts}.  Clearly, the pure point part of
$\widehat{\vU}$ is of the form
\[
  \bigl(\widehat{\vU} \ts\ts \bigr)_{\mathsf{pp}} \, = \,
   \sum_{k\in K} \cI (k) \ts \delta^{}_{k} \ts ,
\]
with $ \cI (k) = \widehat{\vU} \bigl( \{ k \} \bigr)$.  Here, $\bigl(
\cI_{ij} (k) \bigr)_{0 \leqslant i,j \leqslant 1}$ is Hermitian and
positive semi-definite as a consequence of Fact~\ref{fact:Hermitian},
and $K$ is at most a countable subset of $\RR$, where $\lambda K
\subset K$ may be assumed. Inserting   
$\bigl(\widehat{\vU} \ts\ts \bigr)_{\mathsf{pp}} $
into the relation from Lemma~\ref{lem:spectral-parts} leads, after 
some calculations, to the relation
\begin{equation}\label{eq:intens-reno}
    \cI (k) \, = \, \myfrac{1}{\lambda^{2}} \ts
    {A} (k) \, \cI (\lambda k) \ts ,
\end{equation}
which has to hold for all $k\in K$. In particular, for $k=0$, one has
\[
    {A}(0) \, \cI (0) \, = \,  \lambda^{2} \, \cI (0) \ts ,
\]
where $\lambda^{2}$ is the PF eigenvalue of ${A} (0) = M
\otimes M$. This, with Eq.~\eqref{eq:dens-rel}, implies
\begin{equation}\label{eq:zero-peak}
   \widehat{\vU}_{ij} \bigl( \{ 0 \} \bigr)
   \, = \, \cI_{ij} (0) \, = \, \nu^{}_{i} \, \nu^{}_{j} \, = \,
   \frac{\dens (\vL^{(i)}) \ts \dens (\vL^{(j)})}
       {\bigl(\dens (\vL)\bigr)^{2}} \ts ,
\end{equation}
where we employed the PF eigenvector of $A(0)$ together with the
normalisation condition $\sum_{i,j} \cI_{ij} (0) = 1$, the latter
being a consequence of our setting with relative frequencies;
compare~\cite[Cor.~9.1]{TAO} for the underlying calculation. Note that
Eq.~\eqref{eq:zero-peak} also means that the Hermitian
$2 \! \times \!  2$-matrix $\bigl( \cI_{ij} (0) \bigr)$ has rank $1$.
Moreover, Theorem~\ref{thm:pp} implies that $\cI (k) = 0$ for all
$k\ne 0$, because otherwise Eq.~\eqref{eq:u-FT} would give
$\widehat{\gamma^{}_{u}} \bigl( \{ k \} \bigr) = \dens (\vL^{w})
\sum_{i,j} \overline{u^{}_{i}} \; \cI^{}_{ij} (k) \, u^{}_{j} > 0$
for a suitable choice of the weights, in contradiction to
Theorem~\ref{thm:pp}.  Consequently, we have
\begin{equation}\label{eq:I-pointpart}
    \bigl(\widehat{\vU}\ts\bigr)_{\mathsf{pp}} 
    \, = \, \cI (0) \, \delta^{}_{0}
\end{equation}
and the pure point part of $\widehat{\vU}$ is completely determined
this way.

\begin{remark}\label{rem:Eberlein}
  Let us relate Eq.~\eqref{eq:I-pointpart} back to the measure vector
  $Y$ from Eq.~\eqref{eq:sap}. With hindsight, $Y$ is the strongly
  almost periodic part of the \emph{Eberlein decomposition}
  \cite{GLA,MS} of $\vU$, so that
  $\widehat{Y} = \bigl(\widehat{\vU}\ts\bigr)_{\mathsf{pp}} $. Note
  that any $Y_{ij}$ is an absolutely continuous measure with support
  $\RR$, and thus \emph{not} a pure point measure as in the case of
  the Fibonacci chain in \cite{BG15}. This is related to the set
  $\varDelta$ from Proposition~\ref{prop:hull} not being uniformly
  discrete, and is the deeper reason why our discrete equations in
  Proposition~\ref{prop:reno-eqs} cannot produce a spectral purity
  result.  \exend
\end{remark}

\subsection{Analysis of absolutely continuous part}
Next, let $h_{ij}$ be the Radon--Nikodym density for
$\bigl(\widehat{\vU}_{ij}\bigr)_{\mathsf{ac}}$, and $h$ the
corresponding vector of densities. Each component is a locally
integrable function on the real line.  Another application of
Lemma~\ref{lem:spectral-parts}, together with an elementary
transformation of variable calculation, which changes the scalar
prefactor, then reveals the identity
\begin{equation}\label{eq:h-iter}
   {h} \bigl(\tfrac{k}{\lambda} \bigr) \, = \,
   \myfrac{1}{\lambda} \ts {A} \bigl(\tfrac{k}{\lambda}
   \bigr) \, {h} (k) \ts ,
\end{equation}
which must hold for Lebesgue-almost every (a.e.) $k\in \RR$. An
iteration gives
\begin{equation}\label{eq:h-iter-mult}
   {h} \bigl(\tfrac{k}{\lambda^{n}}\bigr) \, = \,
   \myfrac{1}{\lambda^{n}} {A} 
   \bigl(\tfrac{k}{\lambda^{n}} \bigr)
   \cdot \ldots \cdot {A} 
   \bigl(\tfrac{k}{\lambda}\bigr) \, {h} (k)
\end{equation}
for any $n\in\NN$ and still almost every $k\in\RR$. We thus obtain a
matrix Riesz product type consistency equation for ${h}$, which can
also be viewed as a (complex) linear \emph{cocycle} for the mapping 
defined by $k \mapsto k/\lambda$ on $\RR_{+}$.

Let us pause to explain the idea behind our ensuing
analysis. Iterating a vector $h(k)$ inwards via
Eq.~\eqref{eq:h-iter-mult} means that, asymptotically, we multiply
with a matrix from the left that more and more looks like
$A(0) = M \otimes M$. Since the leading eigenvalue is then (almost)
$\lambda^{2}$, and we divide only by a single factor of $\lambda$, a
component in the direction of $w^{}_{\mathrm{PF}}$ (provided it
exists) gets approximately multiplied by $\lambda$ in each iteration
step.  Consequently, we get a `blow-up' of $h(k)$ along the (inward)
iteration sequence, which means an asymptotic growth as $k^{-1}$ for
$k \searrow 0$. Clearly, such a behaviour is impossible for a locally
integrable function; see also Appendix B for an illustration in a
one-dimensional analogue.

There are two mechanisms to avoid the blow-up: Either $h$ vanishes for
a.e.\ $k\in\RR$, or $h(k)$ is of a form that asymptotically avoids the
direction of $w^{}_{\mathrm{PF}}$. The first option is what we are
after, while the second is connected with the Lyapunov spectrum of our
iteration. As we shall see, the second possibility still occurs, and
must be ruled out by the analysis of the asymptotic behaviour in an
inverted (outward) iteration, so that we finally get $h=0$ in the
Lebesgue sense, which means
$\bigl(\widehat{\vU}\ts\ts \bigr)_{\mathsf{ac}} = 0$.  The actual line
of arguments to make this strategy work is a bit delicate and
technical, and will require a number of steps.\smallskip

The rough outline of our arguments is as follows. The goal is to show
that $h$ vanishes, and we only need to do so on a small interval
(Lemma~\ref{lem:eps-suffice}). With the extra hermiticity and rank
structure of the matrix $(h^{}_{ij})$ from Lemma~\ref{lem:rank-one},
we gain a dimensional reduction, via Fact~\ref{fact:rank-1}. This
allows us to work with the Fourier matrices rather than their
Kronecker products, where we profit from the algebraic structure
derived earlier. In Proposition~\ref{prop:det-cond}, we derive an
asymptotic result on the determinants for the iterated application of
our recursion that will later lead to an important relation between
the Lyapunov exponents of our iteration.

The next step consists in the analysis of the inward iteration, which
gives interesting insight (Proposition~\ref{prop:inward}), but does
not suffice to derive $h=0$. The main step then consists in deriving
the (pointwise) Lyapunov exponents for the outward iteration
(Proposition~\ref{prop:L-out}) and to show that they are both strictly
positive (Corollary~\ref{coro:positive}), which is incompatible with
the translation boundedness of the diffraction measure. This will then
lead to the desired conclusion that $h=0$ in the Lebesgue sense
in Theorem~\ref{thm:no-ac}, and to the determination of
 $\widehat{\vU}_{ij}$ in
Corollary~\ref{coro:singular}. En route, we need a number of
intermediate steps that revolve around the existence of various
limits, where we need methods from Diophantine approximation,
matrix cocycles, and the theory of almost periodic functions.

\begin{lemma}\label{lem:eps-suffice}
  Let\/ ${h}$ be the vector of Radon--Nikodym densities of\/
  $(\widehat{\vU}\ts )^{}_{\mathsf{ac}}$.  If there is an\/
  $\varepsilon > 0$ such that\/ ${h} (k) = 0$ for a.e.\ $k\in \bigl[
  \frac{\varepsilon}{\lambda}, \varepsilon \bigr]$, one has\/ ${h} (k)
  = 0$ for a.e.\ $k\in\RR$.
\end{lemma}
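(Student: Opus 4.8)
The plan is to use the renormalisation relation \eqref{eq:h-iter} to propagate the hypothesis from the single fundamental interval $[\varepsilon/\lambda,\varepsilon]$ to all of $\RR_{+}$, and then to deduce the statement for all of $\RR$ from the hermiticity of the matrix $(h^{}_{ij})$. First I would note that the intervals $\bigl[\varepsilon/\lambda^{n+1},\varepsilon/\lambda^{n}\bigr]$ for $n\in\NN_{0}$ form a partition of $(0,\varepsilon]$ up to a Lebesgue-null set (the endpoints), and the intervals $\bigl[\varepsilon\lambda^{n},\varepsilon\lambda^{n+1}\bigr]$ for $n\in\NN_{0}$ cover $[\varepsilon,\infty)$ in the same way. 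So it suffices to show that ${h}=0$ a.e.\ on each of these intervals.

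For the inward direction (the intervals shrinking towards $0$), I would apply the iterated relation \eqref{eq:h-iter-mult}: for a.e.\ $k$, one has ${h}\bigl(k/\lambda^{n}\bigr)=\lambda^{-n}\,{A}\bigl(k/\lambda^{n}\bigr)\cdots{A}\bigl(k/\lambda\bigr)\,{h}(k)$. Taking $k$ to range over the set $\bigl[\varepsilon/\lambda,\varepsilon\bigr]$, the left-hand side ranges over $\bigl[\varepsilon/\lambda^{n+1},\varepsilon/\lambda^{n}\bigr]$; since ${h}(k)=0$ for a.e.\ $k$ in the source interval by hypothesis, and the substitution $k\mapsto k/\lambda^{n}$ is a smooth bijection with nonvanishing Jacobian (hence preserves null sets in both directions), we get ${h}=0$ a.e.\ on $\bigl[\varepsilon/\lambda^{n+1},\varepsilon/\lambda^{n}\bigr]$ for every $n\in\NN$. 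Thus ${h}=0$ a.e.\ on $(0,\varepsilon]$.

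For the outward direction I would run the recursion in reverse. Rewriting \eqref{eq:h-iter} as ${h}(k)=\lambda\,{A}\bigl(k/\lambda\bigr)^{-1}\,{h}\bigl(k/\lambda\bigr)$ — which is legitimate because, as recorded just before the statement, ${A}^{}_{U}(k)$, hence ${A}(k)$, has no zero eigenvalue for $\lvert k\rvert<1/3$; one first shrinks $\varepsilon$ if necessary so that $\varepsilon\lambda<1/3$ — one sees that ${h}$ vanishing a.e.\ on $\bigl[\varepsilon\lambda^{n-1},\varepsilon\lambda^{n}\bigr]\subset[\varepsilon/\lambda,\varepsilon\lambda^{n}]$ forces ${h}$ to vanish a.e.\ on $\bigl[\varepsilon\lambda^{n},\varepsilon\lambda^{n+1}\bigr]$, again using that $k\mapsto k/\lambda$ is a null-set-preserving bijection. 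By induction, ${h}=0$ a.e.\ on $[\varepsilon,\infty)$. Combining the two directions yields ${h}=0$ a.e.\ on $(0,\infty)$, hence a.e.\ on $\RR_{+}$.

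Finally, I would extend to negative arguments. Since $\widehat{\vU}_{ij}=\widehat{\vU_{ij}}$ and $\widehat{\vU_{ji}}=\overline{\widehat{\vU_{ij}}}=\widehat{\widetilde{\vU_{ij}}}$, the matrix-valued measure $\bigl(\widehat{\vU}_{ij}\bigr)$ satisfies $\widehat{\vU}_{ji}=\widetilde{\,\widehat{\vU}_{ij}\,}$, so reflecting $k\mapsto -k$ permutes the entries (with complex conjugation) and carries the absolutely continuous parts to absolutely continuous parts; hence the vanishing of every $h_{ij}$ on $\RR_{+}$ forces the vanishing of every $h_{ij}$ on $\RR_{-}$ as well. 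The value at $k=0$ is irrelevant, being a single point. Therefore ${h}=0$ a.e.\ on $\RR$, as claimed. The only genuinely delicate point is the invertibility of ${A}(k)$ used in the outward step, which is why one needs to localise to an interval on which $\lvert k\rvert<1/3$; everything else is bookkeeping about null sets under the affine maps $k\mapsto\lambda^{\pm 1}k$.
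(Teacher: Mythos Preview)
Your inward step and your reflection argument for negative $k$ are fine, and they match the paper's approach. The gap is in the outward step. You justify the inversion ${h}(k)=\lambda\,{A}(k/\lambda)^{-1}\,{h}(k/\lambda)$ by invoking the fact that ${A}(k)$ has no zero eigenvalue for $\lvert k\rvert<\tfrac{1}{3}$, and then shrink $\varepsilon$ so that $\varepsilon\lambda<\tfrac{1}{3}$. But in the induction, to pass from $[\varepsilon\lambda^{n-1},\varepsilon\lambda^{n}]$ to $[\varepsilon\lambda^{n},\varepsilon\lambda^{n+1}]$ you need ${A}(k/\lambda)$ invertible for $k/\lambda\in[\varepsilon\lambda^{n-1},\varepsilon\lambda^{n}]$, and already for $n=2$ this interval reaches $\varepsilon\lambda^{2}$, which can exceed $\tfrac{1}{3}$ no matter how small you chose $\varepsilon$. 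Shrinking $\varepsilon$ buys you only finitely many steps; it cannot carry the induction to infinity.

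The paper's fix is to compute $\det\bigl({A}(k)\bigr)=\lvert\det(B(k))\rvert^{4}=\bigl(1+2\cos(2\pi k)\bigr)^{4}$, so ${A}(k)$ fails to be invertible only on the countable set $Z=\tfrac{1}{3}\ZZ\setminus\ZZ$. Hence the inverse relation ${h}(\lambda k)=\lambda\,{A}(k)^{-1}\,{h}(k)$ holds for a.e.\ $k\in\RR$, and iterating it only accumulates the null exception set $\bigcup_{n\geqslant 0}\lambda^{-n}Z$. With this in hand, your outward induction goes through for all $n$ and the proof is complete. Replace your bounded-interval invertibility remark with this global (almost-everywhere) invertibility, and the argument is correct.
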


\begin{proof}
  A multiple application of Eq.~\eqref{eq:h-iter} implies that ${h}
  (k) = 0$ for a.e.\ $k\in \bigl[ \frac{\varepsilon}{\lambda^{m+1}},
  \frac{\varepsilon}{\lambda^{m}} \bigr]$, for any $m \in
  \NN^{}_{0}$. Consequently, ${h} (k) = 0$ for a.e.\ $k\in
  [0,\varepsilon]$.

  Since $\det\bigl({A}(k)\bigr)= \bigl|\det\bigl(B(k)\bigr) \bigr|^{4}
  = \bigl| p(k) \bigr|^{4}
  = \bigl( 1 + 2 \cos(2 \pi k) \bigr)^{\! 4} \geqslant 0 $, the matrix
  ${A}(k)$ is invertible, unless $k \in Z:= \ZZ + \bigl\{ \frac{1}{3} ,
  \frac{2}{3} \bigr\} = \frac{1}{3} \ZZ \setminus \ZZ$. The latter is
  a countable set of isolated points, and thus of measure $0$. This
  implies that Eq.~\eqref{eq:h-iter} has a counterpart of the form
\begin{equation}\label{eq:h-iter-two}
    {h} ( \lambda k) \, = \, \lambda\ts
    {A}^{-1} (k) \, {h} (k) \ts ,
\end{equation}
which holds for almost all $k\in\RR$, namely those $k \not\in
\frac{1}{3} \ZZ \setminus \ZZ$ for which Eq.~\eqref{eq:h-iter} is
valid.

So, with ${h}(k) = 0$ for almost all $k\in [0,\varepsilon]$, we get
the corresponding property on $[-\varepsilon, 0]$ from ${h} (-k) =
\overline{{h} (k)}$, and then ${h} = 0$ a.e.\ by iteration of
Eq.~\eqref{eq:h-iter-two}, since the additional exception set for the
multiple application of Eq.~\eqref{eq:h-iter-two},
$\bigcup_{n\geqslant 0} \lambda^{-n} \bigl( \frac{1}{3} \ZZ \setminus
\ZZ \bigr)$, is still a null set.
\end{proof}

Similarly, we could consider the matrix $\bigl( h_{ij} (k)\bigr)$ and
its determinant, $\det \bigl( h_{ij} (k)\bigr)$. If the latter
vanishes for a.e.\ $k \in [0,\varepsilon]$ for some $\varepsilon >0$,
the determinant vanishes a.e.\ on $\RR$, because
\[
   \bigl( h_{ij} (\lambda k)\bigr) \, = \;
   \lambda \ts B^{-1} (k) \bigl( h_{ij} (k)\bigr)
   (B^{\dag})^{-1} (k)
\]
holds for a.e.\ $k\in \RR$, which is a full matrix version of
Eq.~\eqref{eq:h-iter-two}. This little observation points the way to a
standard form of $h(k)$ that can be used to simplify the task at hand.

Indeed, Fact~\ref{fact:Hermitian} together with standard arguments
implies that $\bigl( h_{ij} (k)\bigr)$ is a positive semi-definite
Hermitian matrix, for a.e.\ $k\in\RR$. For any admissible $k$, the
matrix is thus of the form $H=\left(\begin{smallmatrix} a & b + \ii c \\
    b - \ii c & d \end{smallmatrix}\right)$ with $a,b,c,d \in \RR$,
$a,d \geqslant 0$ and $a \ts d - (b^2 + c^2) \geqslant 0$. If $a=0$ or
$d=0$, one also has $b=c=0$, the determinant vanishes, and the matrix
has rank at most $1$. In general, whenever the determinant is strictly
positive, $H$ has rank $2$, but can uniquely be decomposed as
\begin{equation}\label{eq:split}
    H \, = \, \begin{pmatrix} a' & b + \ii c \\
    b - \ii c & d \end{pmatrix} + a'' \begin{pmatrix} 1 & 0 \\
    0 & 0 \\ \end{pmatrix}
\end{equation}
with $a = a' + a''$ such that $a' d - (b^2 + c^2) = 0$ and that both
$a'$ and $a''$ are strictly positive. This way, $H$ is split as a sum
of two Hermitian and positive semi-definite matrices of rank $1$.

Now, the inward iteration, in the formulation with $2\!\times\!
2$-matrices, reads
\begin{equation}\label{eq:mat-iter}
   \bigl( h_{ij} \bigl(\tfrac{k}{\lambda}\bigr)\bigr) \, = \;
   \myfrac{1}{\lambda} \, B \bigl( \tfrac{k}{\lambda} \bigr) 
   \bigl( h_{ij} (k)\bigr) B^{\dag} \bigl( 
   \tfrac{k}{\lambda} \bigr)
\end{equation}
and clearly preserves hermiticity and positive semi-definiteness,
while the rank cannot be increased. Moreover, the action is linear, so
respects the splitting of Eq.~\eqref{eq:split}.

\begin{lemma}\label{lem:rank-one}
  For a.e.\ $k \in \RR$, the Radon--Nikodym matrix\/ $\bigl( h_{ij}
  (k)\bigr)$ is Hermitian, positive semi-definite and of rank at
  most\/ $1$.
\end{lemma}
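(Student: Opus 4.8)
The plan is to handle the three assertions separately. Hermiticity and positive semi\-/definiteness are routine, and the real content is the rank bound, which I would obtain by a blow\-/up argument along the inward renormalisation. From $\overline{\widehat{\vU_{ij}}}=\widehat{\vU_{ji}}$ one gets $\overline{h_{ij}(k)}=h_{ji}(k)$ for a.e.\ $k$, so $\bigl(h_{ij}(k)\bigr)$ is Hermitian a.e. For positivity I would use that, by Eq.~\eqref{eq:u-FT}, for each fixed weight vector $u\in\CC^{2}$ the measure $\bigl(\widehat{\gamma^{}_{u}}\bigr)_{\mathsf{ac}}$ is positive with density $\dens(\vL)\,u^{\dag}\bigl(h_{ij}(k)\bigr)u$, whence $u^{\dag}\bigl(h_{ij}(k)\bigr)u\geqslant 0$ for a.e.\ $k$; letting $u$ range over a countable dense subset of $\CC^{2}$ then shows that $\bigl(h_{ij}(k)\bigr)$ is positive semi-definite for a.e.\ $k$ (equivalently, one may apply Fact~\ref{fact:Hermitian} to a Vitali family of shrinking intervals and use Lebesgue differentiation for each entry).

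For the rank bound, note that it is equivalent to $\det\bigl(h_{ij}(k)\bigr)=0$ a.e. By the full matrix analogue of Eq.~\eqref{eq:h-iter-two} recorded right after Lemma~\ref{lem:eps-suffice}, this determinant vanishes a.e.\ on $\RR$ as soon as it vanishes a.e.\ on one interval $[0,\varepsilon]$, and since $B(k)$ is invertible off the null set $\tfrac13\ZZ\setminus\ZZ$, the rank of $\bigl(h_{ij}(k)\bigr)$ is preserved a.e.\ under $k\mapsto\lambda^{\pm1}k$. So it suffices to derive a contradiction from the assumption that, for some sufficiently small $\varepsilon>0$, the set $E:=\{k\in[\varepsilon/\lambda,\varepsilon)\,:\,\operatorname{rank}(h_{ij}(k))=2\}$ has positive Lebesgue measure; a positive\-/measure set of rank\-/$2$ points inside $[0,\varepsilon]$ can always be moved into $[\varepsilon/\lambda,\varepsilon)$ by a dilation $k\mapsto\lambda^{n}k$.

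The heart of the argument: let $\ell=(\lambda,1)$ be the left PF eigenvector of $M$, so $\ell M=\lambda\ell$, take $u=\ell^{\,t}$, and set $\phi(k):=\ell\,\bigl(h_{ij}(k)\bigr)\,\ell^{\,t}=u^{\dag}\bigl(h_{ij}(k)\bigr)u\geqslant 0$. By the positivity above and Eq.~\eqref{eq:u-FT}, $\dens(\vL)\,\phi$ is the density of $\bigl(\widehat{\gamma^{}_{u}}\bigr)_{\mathsf{ac}}$, a part of the translation bounded positive measure $\widehat{\gamma^{}_{u}}$, hence $\phi\in L^{1}_{\mathrm{loc}}(\RR)$ with uniformly bounded local integrals; in particular $\int_{0}^{\varepsilon}\phi<\infty$. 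Iterating the matrix renormalisation Eq.~\eqref{eq:mat-iter} gives, for a.e.\ $k$ and all $m\in\NN$,
\[
   \bigl(h_{ij}(k/\lambda^{m})\bigr)\,=\,\lambda^{-m}\,\mathcal B_{m}(k)\,\bigl(h_{ij}(k)\bigr)\,\mathcal B_{m}(k)^{\dag},
   \qquad \mathcal B_{m}(k):=B(k/\lambda^{m})\cdots B(k/\lambda),
\]
and therefore $\phi(k/\lambda^{m})=\lambda^{-m}\bigl(\ell\,\mathcal B_{m}(k)\bigr)\bigl(h_{ij}(k)\bigr)\bigl(\ell\,\mathcal B_{m}(k)\bigr)^{\dag}=\lambda^{m}\,\tau_{m}(k)\bigl(h_{ij}(k)\bigr)\tau_{m}(k)^{\dag}$ with $\tau_{m}(k):=\lambda^{-m}\ell\,\mathcal B_{m}(k)$. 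Since $B(k')\to M$ geometrically fast as $k'\to0$, $M$ is primitive with leading eigenvalue $\lambda$, and $\ell M=\lambda\ell$, a Perron--Frobenius argument for this inhomogeneous product --- parallel to, but simpler than, the one proving Proposition~\ref{prop:simple-explode} --- shows that $\tau_{m}(k)$ converges, uniformly for $k\in[0,\varepsilon]$ with $\varepsilon$ small, to a continuous row vector $\tau_{\infty}(k)$ that does not vanish on $[0,\varepsilon]$ (with $\tau_{\infty}(0)=\ell$); consequently $c_{1}:=\inf_{m\geqslant 0,\,k\in[0,\varepsilon]}\lVert\tau_{m}(k)\rVert>0$. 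Using $v\,H\,v^{\dag}\geqslant\mu_{\min}(H)\lVert v\rVert^{2}$ for Hermitian positive semi-definite $H$ and a row vector $v$, we obtain $\phi(k/\lambda^{m})\geqslant\lambda^{m}\,c_{1}^{2}\,\mu_{\min}\bigl(h_{ij}(k)\bigr)$. Now $E\subset[\varepsilon/\lambda,\varepsilon)$ makes the dilates $E/\lambda^{m}$ $(m\geqslant 0)$ pairwise disjoint subsets of $[0,\varepsilon)$, and the substitution $k'=k/\lambda^{m}$ gives
\[
   \int_{E/\lambda^{m}}\phi(k')\dd k'\,=\,\lambda^{-m}\!\int_{E}\phi(k/\lambda^{m})\dd k\;\geqslant\;c_{1}^{2}\!\int_{E}\mu_{\min}\bigl(h_{ij}(k)\bigr)\dd k\;=:\;\delta_{0}\,>\,0
\]
for every $m$, since $\mu_{\min}>0$ on the rank\-/$2$ set $E$. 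Summing over the disjoint $E/\lambda^{m}$ yields $\int_{0}^{\varepsilon}\phi\geqslant\sum_{m\geqslant0}\delta_{0}=\infty$, contradicting $\int_{0}^{\varepsilon}\phi<\infty$. Hence $\lvert E\rvert=0$, so $\det\bigl(h_{ij}(k)\bigr)=0$ a.e.\ on $[0,\varepsilon]$, and then a.e.\ on $\RR$, giving the rank bound.

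The one genuinely non-trivial ingredient is the uniform lower bound $c_{1}>0$ on the normalised row products $\lambda^{-m}\ell\,\mathcal B_{m}(k)$, valid for all $m$ simultaneously and for all $k$ in a small neighbourhood of $0$; this is exactly where the primitivity of $M=B(0)$ and the geometric rate of $B(k/\lambda^{j})\to M$ enter, through the Perron--Frobenius technique of Section~\ref{sec:algebra}. Everything else is elementary measure theory combined with the translation boundedness of the diffraction. (Note that this argument shows vanishing of the \emph{determinant}, not of $h$ itself; the latter, which also requires ruling out the subdominant Lyapunov direction via an outward iteration, is left for the later steps.)
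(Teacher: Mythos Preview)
Your proof is correct and follows the same overall blow-up strategy as the paper (inward renormalisation forces growth of order $\lambda^{m}$, which, after the change of variable, produces a constant contribution on each dilate $E/\lambda^{m}$ and hence a divergent integral), but the implementation differs in instructive ways. The paper isolates the rank-$2$ excess via the decomposition of Eq.~\eqref{eq:split}, then iterates the special vector $a''(k_{0})(1,0,0,0)^{t}$ through the $4\!\times\!4$ matrices $A^{}_{U}$ using Proposition~\ref{prop:simple-explode} and Corollary~\ref{coro:simple-explode}, and invokes Lusin's theorem to control the dependence of the constants on $k_{0}$. You instead work with the intrinsic quantity $\mu_{\min}\bigl(h_{ij}(k)\bigr)$, pair it with the left PF row vector $\ell$ and the $2\!\times\!2$ matrices $B$, and derive the uniform lower bound $c_{1}>0$ directly from a telescoping estimate for $\tau_{m}=\lambda^{-m}\ell\,\mathcal{B}_{m}$; this avoids both the splitting device and Lusin. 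The non-trivial input you flag --- that $\|\tau_{m}(k)-\ell\|\leqslant C\ts k$ uniformly in $m$ --- follows from the bound $\|\mathcal{B}_{m}(k)\|\leqslant C\lambda^{m}$ obtained in Appendix~A (the estimate preceding Eq.~\eqref{eq:in-upper}), so the dependence on the paper's Perron--Frobenius machinery is comparable. Your route is arguably cleaner and more self-contained for this lemma; the paper's version has the advantage of plugging directly into the $A^{}_{U}$-framework already set up in Section~\ref{sec:algebra}.
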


\begin{proof}
  Hermiticity and positive semi-definiteness are a simple consequence
  of Fact~\ref{fact:Hermitian}.  In view of the arguments used in the
  proof of Lemma~\ref{lem:eps-suffice}, it suffices to establish the
  claim on the rank for an interval of the form
  $J=\bigl[ \frac{\varepsilon}{\lambda}, \varepsilon \bigr]$, for some
  $\varepsilon > 0$. To do so, we pick $\varepsilon$ as in
  Proposition~\ref{prop:simple-explode}. If the rank is at most $1$
  for a.e.\ $k\in J$, we are done. If not, there is a subset
  $J'\subseteq J$ of positive measure where the rank of
  $\bigl( h_{ij} (.) \bigr)$ is $2$. Then, for any $k^{}_{0} \in J'$,
  we write
\[ 
    h^{}_{00} (k^{}_{0}) \, = \, a' (k^{}_{0}) + a'' (k^{}_{0})
\]
according to the splitting defined in Eq.~\eqref{eq:split}. In
particular, $a' (k^{}_{0})$ and $a'' (k^{}_{0})$ are then both
strictly positive.

Now, the iteration of the matrix $a'' (k^{}_{0}) \left(
  \begin{smallmatrix} 1 & 0 \\ 0 & 0 \end{smallmatrix} \right)$ under
Eq.~\eqref{eq:mat-iter} corresponds (via a bijective mapping that
preserves the asymptotic growth property) to the iteration of the
vector $a'' (k^{}_{0}) (1,0,0,0)^{t}$ under Eq.~\eqref{eq:h-iter},
which is then (via our change of basis) governed by
Proposition~\ref{prop:simple-explode} and
Corollary~\ref{coro:simple-explode}. So, for a subset $J' \subseteq J$
of positive measure, we get an asymptotic growth as $ c( k^{}_{0})\ts
\frac{1}{k} \, w^{}_{\mathrm{PF}}$ as $k \to 0$. By Lusin's theorem,
there is yet another subset $J''\subset J'$ of positive measure (as
close to that of $J'$ as we want, in fact) such that $a'' (k^{}_{0})$
agrees with a continuous function on $J''$, and this property is
transported to the scaled versions of $J''$ under inward iteration.

However, this is incompatible with $h^{}_{00} (k)$ being locally
integrable: Since $a' (k^{}_{0}) > 0$ for all $k^{}_{0}$ under
consideration, we cannot have any cancellations between
$a' (k)$ and $a'' (k)$, so that $h^{}_{00} (k)$ must grow at least as
$\frac{1}{k}$ for $k \to 0$, for all $k$ along iterations that started
from some $k^{}_{0} \in J''$. This means that rank $2$ on a set of
positive measure is impossible, which establishes the claim.
\end{proof}

\subsection{Dimensional reduction}
Our previous analysis means that we may continue under the assumption
that $\bigl( h_{ij} (k)\bigr)$, for almost every $k\in\RR$, has rank
at most $1$. This is significant due to the following elementary fact
from linear algebra.

\begin{fact}\label{fact:rank-1}
  If\/ $H \in \mathrm{Mat} (2, \CC)$ is Hermitian, positive
  semi-definite and of rank at most\/ $1$, there are two complex
  numbers\/ $v^{}_{0}$ and\/ $v^{}_{1}$ such that\/
  $H_{ij} = {v^{}_{i}}\, \overline{v^{}_{j}}$ holds for\/
  $i,j \in \{ 0,1 \}$. Here, the vectors\/ $(v^{}_{0},v^{}_{1})^{t}$
  and\/ $\ee^{\ii\phi}(v^{}_{0},v^{}_{1})^{t}$ with\/
  $\phi\in[0,2\pi)$ parametrise the same matrix\/ $H$.  \qed
\end{fact}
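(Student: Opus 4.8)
\textit{Proof proposal.} The plan is to reduce the statement to the spectral theorem for Hermitian matrices. First I would dispose of the trivial case: if $H$ has rank $0$, then $H=0$ and one may take $v^{}_{0}=v^{}_{1}=0$, which satisfies $H_{ij}=v^{}_{i}\,\overline{v^{}_{j}}$. So assume $H$ has rank exactly $1$.

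Since $H$ is Hermitian, it is unitarily diagonalisable with real eigenvalues; positive semi-definiteness forces these to be non-negative, and rank $1$ means that exactly one of them, say $\mu>0$, is positive while the other vanishes. Writing $e\in\CC^{2}$ for a normalised eigenvector to $\mu$, the spectral decomposition reads $H=\mu\,e\ts e^{\dag}$, that is, $H_{ij}=\mu\,e^{}_{i}\,\overline{e^{}_{j}}$. Setting $v:=\sqrt{\mu}\,e$ then gives $H_{ij}=v^{}_{i}\,\overline{v^{}_{j}}$ as claimed, with $(v^{}_{0},v^{}_{1})^{t}\ne 0$.

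For the uniqueness part, suppose $(v^{}_{0},v^{}_{1})^{t}$ and $(w^{}_{0},w^{}_{1})^{t}$ both parametrise the same $H$. Putting $i=j$ gives $\lvert v^{}_{i}\rvert^{2}=H_{ii}=\lvert w^{}_{i}\rvert^{2}$ for $i\in\{0,1\}$. If all entries vanish we are back in the rank-$0$ case; otherwise pick an index $i$ with $v^{}_{i}\ne 0$, hence also $w^{}_{i}\ne 0$, and set $c:=w^{}_{i}/v^{}_{i}$. From $\lvert c\rvert\,\lvert v^{}_{i}\rvert=\lvert w^{}_{i}\rvert=\lvert v^{}_{i}\rvert$ we get $\lvert c\rvert=1$, so $c=\ee^{\ii\phi}$ for some $\phi\in[0,2\pi)$. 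The relation $w^{}_{i}\,\overline{w^{}_{j}}=v^{}_{i}\,\overline{v^{}_{j}}$ then yields $\overline{w^{}_{j}}=c^{-1}\,\overline{v^{}_{j}}$, hence $w^{}_{j}=\ee^{\ii\phi}\,v^{}_{j}$ for all $j$, which is the stated ambiguity. There is no genuine obstacle here: the argument is purely elementary linear algebra, and the only point requiring a little care is the phase bookkeeping in the uniqueness step.
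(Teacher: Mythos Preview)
Your argument is correct and complete. The paper itself treats this as an elementary fact from linear algebra and supplies no proof (the statement simply ends with \qed), so there is nothing to compare against; your spectral-theorem approach is the standard way to see it.
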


In Dirac notation, this means $H = | v \rangle \langle v |$, which is
$0$ or a multiple of a projector. Inspecting the iteration
\eqref{eq:mat-iter}, it is then clear that we may consider a vector
$v(k) = \bigl(v^{}_{0} (k), v^{}_{1} (k) \bigr)^{t}$ of functions from
$L^{2}_{\mathrm{loc}}(\RR)$ under the simpler \emph{inward} iteration
\begin{equation}\label{eq:l2iter}
      v \bigl(\tfrac{k}{\lambda} \bigr) \, = \,
    \myfrac{1}{\mbox{\small $\sqrt{\lambda}$}\ts } \, 
    B \bigl( \tfrac{k}{\lambda} \bigr)
    \ts v (k) \ts ,
\end{equation}
which is a considerable dimensional reduction of the iteration problem.  
Likewise, we also have the \emph{outward} analogue
\begin{equation}\label{eq:out-iter}
      v (\lambda k) \, = \, \sqrt{\lambda} \,
      B^{-1} (k)  \ts  \ts v (k) \ts ,
\end{equation}
for $k\not\in Z=\frac{1}{3}\ZZ\setminus \ZZ$. For later use, let us
state a useful property on the asymptotic behaviour of the 
corresponding determinants.

\begin{prop}\label{prop:det-cond}
   For all\/ $k\in\RR$ with\/ $k\not\in\bigcup_{m\geqslant 1}
   \lambda^{m} Z$, one has
\[
    \lim_{n\to\infty} \myfrac{1}{n} \, \log\, \bigl| \det
    \left( B \bigl( \tfrac{k}{\lambda^{n}} \bigr) \cdot \ldots \cdot
    B \bigl( \tfrac{k}{\lambda} \bigr) \right) \bigr|
    \, = \, \log (3) \ts ,
\]
while, for almost all\/ $k\in\RR$ with\/
$k\not\in\bigcup_{m\geqslant 0} \lambda^{-m} Z$, one finds
\[
    \lim_{n\to\infty} \myfrac{1}{n} \, \log \, \bigl| \det
    \bigl( B^{-1} (\lambda^{n-1} k) \cdot \ldots\cdot
    B^{-1} (k) \bigr) \bigr| \, = \, 0 \ts .
\]
\end{prop}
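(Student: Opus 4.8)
The plan is to reduce both statements to a scalar question about Ces\`{a}ro averages of $\log\lvert p\rvert$ by passing to determinants. From Eq.~\eqref{eq:B-explicit} one has $\det B(k)=-p(k)$, hence $\lvert\det B(k)\rvert=\lvert p(k)\rvert=\lvert 1+2\cos(2\pi k)\rvert$, and the determinant of any product of factors $B^{\pm 1}$ factors into the corresponding product of values of $p$ (resp.\ of $1/p$). Therefore
\[
   \myfrac{1}{n}\,\log\bigl\lvert\det\bigl(B(\tfrac{k}{\lambda^{n}})\cdots
   B(\tfrac{k}{\lambda})\bigr)\bigr\rvert
   \,=\,\myfrac{1}{n}\sum_{j=1}^{n}\log\bigl\lvert p(\tfrac{k}{\lambda^{j}})\bigr\rvert
\]
and
\[
   \myfrac{1}{n}\,\log\bigl\lvert\det\bigl(B^{-1}(\lambda^{n-1}k)\cdots
   B^{-1}(k)\bigr)\bigr\rvert
   \,=\,-\,\myfrac{1}{n}\sum_{j=0}^{n-1}\log\bigl\lvert p(\lambda^{j}k)\bigr\rvert\ts .
\]

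For the inward product, the hypothesis $k\notin\bigcup_{m\geqslant 1}\lambda^{m}Z$ is exactly the condition that $p(k/\lambda^{j})\neq 0$ for every $j\geqslant 1$, so every summand is finite; since $p$ is continuous with $p(0)=3=\lvert\det M\rvert\neq 0$ and $k/\lambda^{j}\to 0$, each summand tends to $\log 3$, and the Ces\`{a}ro mean of a convergent sequence has the same limit (Stolz--Ces\`{a}ro). This yields $\log 3$.

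The outward product is the substantial case. Here $\lvert p\rvert$ is $1$-periodic and one must show that $\frac{1}{n}\sum_{j=0}^{n-1}\log\lvert p(\lambda^{j}k)\rvert\to 0$ for almost every $k$; the excluded countable set $\bigcup_{m\geqslant 0}\lambda^{-m}Z$ only ensures that the individual terms are finite. Two ingredients enter. First, $\log\lvert p\rvert$ is integrable over a period with mean zero: from the identity $1+2\cos(2\pi x)=\sin(3\pi x)/\sin(\pi x)$ together with $\int_{0}^{1}\log\lvert\sin(\pi x)\rvert\dd x=-\log 2$ (equivalently, $1+x+x^{2}$ has Mahler measure $1$, all its roots lying on the unit circle), one gets $\int_{0}^{1}\log\lvert p(x)\rvert\dd x=0$. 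Second, since $\lambda>1$, Weyl's theorem gives that the orbit $(\lambda^{j}k)_{j\geqslant 0}$ is uniformly distributed modulo $1$ for Lebesgue-almost every $k$. Were $\log\lvert p\rvert$ bounded and continuous, these two facts would immediately give convergence of the Ces\`{a}ro average to $0$.

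The real obstacle is that $\log\lvert p\rvert$ has logarithmic singularities at the points of $Z=\{\pm\tfrac13\}+\ZZ$, so one must show that the orbit $(\lambda^{j}k)_{j}$ of almost every $k$ does not approach these singularities too fast. The plan is to split $\log\lvert p\rvert=g^{}_{L}+r^{}_{L}$, with $g^{}_{L}:=\max(\log\lvert p\rvert,-L)$ bounded and continuous and $r^{}_{L}:=\min(\log\lvert p\rvert+L,0)\leqslant 0$ supported in $L$-dependent neighbourhoods of $Z$. For fixed $L$, equidistribution gives $\frac{1}{n}\sum_{j<n}g^{}_{L}(\lambda^{j}k)\to\int_{0}^{1}g^{}_{L}$, and $\int_{0}^{1}g^{}_{L}$ decreases to $0$ as $L\to\infty$. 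It then remains to bound $\frac{1}{n}\sum_{j<n}\lvert r^{}_{L}(\lambda^{j}k)\rvert$ uniformly in $n$ by a quantity tending to $0$ as $L\to\infty$, and this is where metric Diophantine input is needed: a Borel--Cantelli argument shows that, for almost every $k$, the distance from $\lambda^{j}k$ to the set $\{\pm\tfrac13\}+\ZZ$ is at least $j^{-2}$ for all large $j$ (the relevant measures being $O(j^{-2})$ and hence summable), and combining this with a dyadic decomposition of the neighbourhoods of $\pm\tfrac13$ and a discrepancy estimate for $(\lambda^{j}k)$, valid for almost every $k$, gives, for each $L$, a bound $\limsup_{n}\frac{1}{n}\sum_{j<n}\lvert r^{}_{L}(\lambda^{j}k)\rvert\leqslant\varepsilon(L)$ with $\varepsilon(L)\to 0$ as $L\to\infty$. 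Letting $L\to\infty$ then forces the Ces\`{a}ro average of $\log\lvert p\rvert$ to $0$, as claimed. These metric estimates are standard but somewhat delicate, and their details are deferred to Appendix~A; controlling the contribution of the singular part of $\log\lvert p\rvert$ is where the main difficulty of the proposition lies.
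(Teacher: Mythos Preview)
Your proposal is correct and follows essentially the same approach as the paper: reduce to Ces\`{a}ro averages of $\log\lvert p\rvert$, handle the inward case by continuity of $p$ at $0$, and for the outward case combine a.e.\ equidistribution of $(\lambda^{j}k)$ modulo $1$, a discrepancy bound, a Borel--Cantelli estimate controlling the approach of $\lambda^{j}k$ to the singularities of $\log\lvert p\rvert$, and the vanishing of $\int_{0}^{1}\log\lvert p\rvert$. The only presentational difference is that the paper packages the final step as an application of Sobol's theorem on sampling integrable functions with controlled singularities along low-discrepancy sequences, whereas you sketch that argument directly via the truncation $g^{}_{L}+r^{}_{L}$ and a dyadic decomposition; the underlying ingredients are the same.
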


\begin{proof}
  Under the condition on $k$ as stated, which ensures that no matrix
  $B\bigl( \frac{k}{\lambda^{m}}\bigr)$ in the product has determinant
  $0$, the first claim is a simple consequence of $\bigl| \det(M)
  \bigr| = 3$ together with $\lim_{n\to\infty} B\bigl( \frac{k}
  {\lambda^{n}} \bigr) = B(0) = M$, which holds for any fixed $k \in
  \RR$.

  For the second claim, recall that $\det \bigl( B(k)\bigr) = - p(k)$
  with the polynomial $p$ from Lemma~\ref{lem:ida}. When $B(k)$ is
  invertible, we thus get
  $\bigl| \det (B^{-1}(k)) \bigr| = \bigl| 1 + 2\ts \cos (2 \pi k)
  \bigr|^{-1}$,
  which is $1$-periodic in $k$. The invertibility condition excludes
  the set $\bigcup_{m\geqslant 0} \lambda^{-m} Z$, which is a null
  set, from our limit considerations. Otherwise, we get
\begin{equation}\label{eq:birksum}
    \myfrac{1}{n} \, \log \, \bigl| \det
    \bigl( B^{-1} (\lambda^{n-1} k) \cdot \ldots\cdot
    B^{-1} (k) \bigr) \bigr| \, = \, - \myfrac{1}{n}
    \sum_{\ell=0}^{n-1} \log \bigl| \det \bigl(
    B (\lambda^{\ell})\bigr)\bigr| .
\end{equation}

Now, we observe that the sequence $(\lambda^{n} k)^{}_{n\in\NN}$ is
uniformly distributed mod $1$ for almost all $k\in\RR$, which follows
from \cite[Thm.~1.7]{Bugeaud}; see also \cite[Sec.~7.3, Thm.~1]{CFS}
or \cite[Cor.~4.3 and Exc.~4.3]{KN}.  Along such sequences, we thus
sample the real-valued function $\varphi$ defined by
\[
     x \, \longmapsto \, \varphi(x) := - \log \, 
    \bigl| 1 + 2\ts \cos (2 \pi x) \bigr| ,
\]
which also satisfies
$\varphi(x) = - \log \ts \bigl| 1 + z + z^{2} \bigr|$ with
$z=\ee^{2 \pi i x}$. Clearly, $\varphi$ is locally integrable and
$1$-periodic, but not (properly) Riemann integrable. Consequently, we
cannot immediately apply Weyl's uniform distribution result, but need
some intermediate steps.

  The \emph{discrepancy} of the sequence
  $\bigl( k_{n} \bigr)_{n\geqslant 0}$, with $k_n := \lambda^{n} k
  \bmod 1$, is defined as
\[
    \cD^{}_{N} \, = \sup_{0\leqslant a < b \leqslant 1}
    \Bigl| (b-a) - \frac{\card \bigl( [a,b) \cap 
    \{ k^{}_{0}, \ldots, k^{}_{N-1} \}  \bigr)}{N}  \Bigr|
\]
and quantifies the statistical deviation of the first $N$ sequence
elements from (finite) uniform distribution. In our case, for any
fixed $\varepsilon > 0$ and almost all $k\in\RR$, it is given by
\[
     \cD^{}_{N} \, = \, \mathcal{O} \left(\frac{
     \bigl(\log(N)\bigr)^{\frac{3}{2}+\varepsilon}}{\sqrt{N}} \right)
\]
as $N \to \infty$; see \cite[Thm.~5.13]{Harman} or \cite{KHN}.

Now, our condition on $k$ ensures that we never hit one of the
(integrable) singularities along the corresponding sequence, so
$\tri  \lambda^{n} k \tri > 0$ for any such $k$ and all $n\in\NN_{0}$,
where $\tri x \tri$ denotes the distance of $x$ from the nearest
integer.  What is more, again for any fixed $\varepsilon > 0$ and
almost all $k\in\RR$, one has the lower bound
\[
   \min_{0\leqslant n\leqslant N} \tri  \lambda^{n} k
   \tri \, > \, \myfrac{1}{N^{1+\varepsilon}}\ts ,
\]
for all sufficiently large $N$. This follows from a standard argument
on the basis of the Borel--Cantelli lemma; see \cite{Haynes,BHL} for
details.  The same type of bound clearly also applies to the distance
of our sequences from the points of 
$Z=\ZZ + \big\{\frac{1}{3},\frac{2}{3}\big\}$.

Our function $\varphi$ has singularities at $\frac{1}{3}$ and
$\frac{2}{3}$ in the unit interval. If we integrate the derivative
$\varphi^{\ts \prime}$ near such a singularity, starting at distance
$\delta$ say, we will get a contribution of order
$\mathcal{O} \bigl(\log(1/\delta)\bigr)$ from it, as follows from a
simple asymptotic estimate.  Now, with $\delta = 1/N^{1+\varepsilon}$,
the product
\[
   \mathcal{O} \left(\frac{\bigl(\log(N)
   \bigr)^{\frac{3}{2}+\varepsilon}}{\sqrt{N}} \right)\,
   \mathcal{O} \bigl( \log(N^{1+\varepsilon}) \bigr) \, = \,
    \mathcal{O} \left(\frac{\bigl(\log(N)
   \bigr)^{\frac{5}{2}+ 2 \ts\varepsilon}}{\sqrt{N}} \right)
\]
still represents an upper bound that tends to $0$ as
$N\to\infty$. Consequently, by Sobol's theorem from uniform
distribution theory, see \cite[Thm.~1]{Sobol} or \cite[Sec.~2]{Hart}
as well as \cite{BHL},
we may conclude that, for a.e.\ $k \in\RR$, our sampling limit
in Eq.~\eqref{eq:birksum} indeed exists and is given by
\[
     \int_{0}^{1} \varphi (x) \dd x 
     \, =  \, - \int_{0}^{1} \log\, \bigl| 1 + \ee^{2 \pi \ii t}
       + \ee^{4 \pi \ii t} \bigr| \dd t  \, = \, 0 \ts ,
\]
where the integral can be calculated via Jensen's formula from
complex analysis.
\end{proof}

Let us now consider the inward and the outward iteration separately.

\subsection{Properties of inward iteration}

To study the inward iteration more closely, it is instructive to use an
expansion in terms of the eigen\-basis of $B(0)=M$. For fixed $k$, we
thus write the vector $v(k)$ according to Fact~\ref{fact:rank-1} as
\[
    v(k) \, = \, \alpha (k) \begin{pmatrix}\lambda\\ 3\end{pmatrix} + 
                 \beta (k) \begin{pmatrix}1-\lambda\\ 3\end{pmatrix} ,
\]
where the first vector is proportional to $v^{}_{\mathrm{PF}}$ from
Eq.~\eqref{eq:lettfreq}, while the second vector belongs to the
eigenvalue $1-\lambda$. An explicit calculation of this change of
basis now shows that iteration \eqref{eq:l2iter} is equivalent to
\begin{equation}\label{eq:newiter}
 \begin{pmatrix} \alpha \bigl(\tfrac{k}{\lambda}\bigr)\\[1mm] 
  \beta \bigl(\tfrac{k}{\lambda}\bigr)\end{pmatrix} \, = \,
  \myfrac{1}{\mbox{\small $\sqrt{\lambda}$}} 
  \begin{pmatrix}\lambda\, \alpha (k)\\[1mm] 
  (1-\lambda)\, \beta (k)\end{pmatrix} - 
  \frac{z\bigl(\tfrac{k}{\lambda}\bigr)}
    {\mbox{\small $\sqrt{\lambda}$}}
  \, N  \begin{pmatrix} \alpha (k)\\[1mm] \beta (k)\end{pmatrix}
  \, = \,  \myfrac{1}{\mbox{\small $\sqrt{\lambda}$}} \Bigl( D - 
  z\bigl(\tfrac{k}{\lambda}\bigr) \, N \Bigr)
   \begin{pmatrix} \alpha (k)\\[1mm] \beta (k)\end{pmatrix} ,
\end{equation}
with $z(k)=3-p(k)$, where $p$ is the trigonometric polynomial from
Lemma~\ref{lem:ida}, the diagonal matrix $D = \mathrm{diag} (\lambda,
1-\lambda)$, which is the diagonalisation of $M$, and the constant
matrix
\[
   N \, = \, \myfrac{1}{39} \begin{pmatrix}
   -3+6\lambda & 10-7\lambda \\
   3+7\lambda & 3-6\lambda  \end{pmatrix} .
\]
Here, $N$ is nilpotent with $N^{2}=0$ and kernel $\CC (6\lambda - 3, 7
\lambda + 3)^{t}$.  The result of this property is that, if $(\alpha
(k), \beta(k) )^{t}$ is in this kernel, the next iterate under
\eqref{eq:newiter} has equal components.  Note that $z(k)=-6\pi\ii\ts
(1+\lambda)k+\mathcal{O}(k^2)$ as $k\to 0$, and that $N$ has matrix
norms
\[
   \|N\|^{}_{1} \, = \, \|N\|^{}_{\infty}
   \, = \, \myfrac{\lambda}{3} \, \approx \, 0.768
   \quad \text{and} \quad \|N\|^{}_{2} \, = \,
   \frac{7 \ts (2\lambda-1)}{39} \, \approx \, 0.647 \ts .
\]

At this point, with $Z=\frac{1}{3} \ZZ \setminus \ZZ$, we can state
the general structure of the inward iteration as follows, and refer to
Appendix A for the details of the underlying calculations and
estimates.

\begin{prop}\label{prop:inward}
  For any\/ $k\in\RR$ with\/
  $k\not\in\bigcup_{m\geqslant 1} \lambda^{m} Z$, the inward iteration
  of Eq.~\eqref{eq:l2iter}, or equivalently that of
  Eq.~\eqref{eq:newiter}, is Lyapunov regular, with the two Lyapunov
  exponents
\[
     \chi^{(1)}_{-} \, = \, \log \myfrac{\lambda - 1}{\sqrt{\lambda}}
     \; < \; 0 \; < \; \chi^{(2)}_{-} \, = \, \log \sqrt{\lambda}\ts ,    
\]   
which are independent of\/ $k$. At each such\/ $k$, we have a matching
vector space filtration
\[
      \{ 0 \}  =  E^{(0)}_{-} (k) \, \subsetneq \,
      E^{(1)}_{-} (k) \, \subsetneq \,
      E^{(2)}_{-} (k)  = \CC^{2}
\]  
with the equivariance condition\/ $E^{(1)}_{-} \bigl(
\frac{k}{\lambda}\bigr) = B\bigl( \frac{k}{\lambda}\bigr) \,
E^{(1)}_{-} (k)$ for all admissible\/ $k\in\RR$.
\end{prop}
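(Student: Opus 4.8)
The plan is to read the inward iteration off its diagonalised form \eqref{eq:newiter}. Over $n$ steps, iteration \eqref{eq:l2iter} is the transfer product $Q_{n}(k) = \prod_{j=n}^{1}\myfrac{1}{\sqrt{\lambda}}B\bigl(\tfrac{k}{\lambda^{j}}\bigr)$, and in the $M$-eigenbasis this becomes, by \eqref{eq:newiter}, the product
\[
   \prod_{j=n}^{1}\myfrac{1}{\sqrt{\lambda}}\,
   \bigl( D - z\bigl(\tfrac{k}{\lambda^{j}}\bigr)N \bigr),
   \qquad D = \mathrm{diag}(\lambda,1-\lambda),
\]
with the nilpotent $N$ (and $N^{2}=0$) from above. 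Since $z(k)=\mathcal{O}(k)$ as $k\to 0$, the factors converge geometrically fast to $\myfrac{1}{\sqrt{\lambda}}D = \mathrm{diag}\bigl(\sqrt{\lambda},\, -(\lambda-1)/\sqrt{\lambda}\bigr)$, so along the inward iteration we have a summable perturbation of the \emph{constant} cocycle generated by $\myfrac{1}{\sqrt{\lambda}}D$. Because $\sqrt{\lambda} > 1 > (\lambda-1)/\sqrt{\lambda}$, with genuine gap $\lambda/(\lambda-1) > 1$, this constant cocycle is hyperbolic: its unstable line is the $\alpha$-axis, its stable line the $\beta$-axis, and its Lyapunov exponents are exactly $\chi^{(2)}_{-} = \log\sqrt{\lambda}$ and $\chi^{(1)}_{-} = \log\frac{\lambda-1}{\sqrt{\lambda}}$.

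First I would invoke the roughness of exponential dichotomies: a summable perturbation of a hyperbolic constant cocycle again admits an exponential dichotomy with the same rates and a one-dimensional stable subbundle that varies with the base point. (The sharp estimates behind this, exploiting $N^{2}=0$ and the spectral gap, are those carried out in Appendix~A.) Define $E^{(1)}_{-}(k)$ to be this stable line, transferred back to the $v$-coordinates of \eqref{eq:l2iter} via the fixed, $k$-independent eigenbasis of $M$; it is one-dimensional, and the dichotomy bounds give at once $\limsup_{n}\myfrac{1}{n}\log\lVert Q_{n}(k)w\rVert \leqslant \chi^{(2)}_{-}$ for every $w$, and $\leqslant \chi^{(1)}_{-}$ for $w \in E^{(1)}_{-}(k)$.

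To upgrade these to exact limits and to obtain regularity, I would bring in the determinant. A direct computation (using $\det D = \lambda(1-\lambda) = -3$ via $\lambda^{2}=\lambda+3$, and $\det N = 0$) gives $\det\bigl(D - z(\tfrac{k}{\lambda^{j}})N\bigr) = -3 + z\bigl(\tfrac{k}{\lambda^{j}}\bigr) \to -3$, so Cesàro averaging yields
\[
   \lim_{n\to\infty}\myfrac{1}{n}\log\bigl\lvert \det Q_{n}(k)\bigr\rvert
   \, = \, \log 3 - \log\lambda \, = \, \log(\lambda-1)
   \, = \, \chi^{(1)}_{-} + \chi^{(2)}_{-},
\]
the middle equality being precisely the characteristic relation $\lambda(\lambda-1)=3$. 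Taking $w_{1}\in E^{(1)}_{-}(k)$ and $w_{2}$ in the unstable direction of the dichotomy, the angle between $Q_{n}(k)w_{1}$ and $Q_{n}(k)w_{2}$ stays bounded away from $0$ uniformly in $n$ (the angle condition), whence $\myfrac{1}{n}\log\bigl(\lVert Q_{n}(k)w_{1}\rVert\,\lVert Q_{n}(k)w_{2}\rVert\bigr)\to \chi^{(1)}_{-}+\chi^{(2)}_{-}$; combined with the two $\limsup$ bounds, both factors must attain their bounds as genuine limits. Hence $\myfrac{1}{n}\log\lVert Q_{n}(k)w\rVert$ converges to $\chi^{(1)}_{-}$ on $E^{(1)}_{-}(k)\setminus\{0\}$ and to $\chi^{(2)}_{-}$ off it; together with the determinant identity this is Lyapunov (forward) regularity, and the exponents manifestly do not depend on $k$. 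Equivariance is then formal: from the relation $Q_{n}(k/\lambda)\cdot\bigl(\myfrac{1}{\sqrt{\lambda}}B(k/\lambda)\bigr) = Q_{n+1}(k)$ between transfer products, a vector with slow forward growth at $k$ maps under $B(k/\lambda)$ to one with slow forward growth at $k/\lambda$; since $B(k/\lambda)$ is invertible for $k/\lambda\notin Z$ and both lines are one-dimensional, $E^{(1)}_{-}(k/\lambda) = B(k/\lambda)\,E^{(1)}_{-}(k)$. The admissible set for $k$ is invariant under $k\mapsto k/\lambda$, so the condition is meaningful for all such $k$.

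I expect the main obstacle to be the quantitative first step: making the persistence of the dichotomy \emph{sharp}, i.e.\ showing that the decaying nilpotent perturbation $z\bigl(\tfrac{k}{\lambda^{j}}\bigr)N$ leaves the two exponents unchanged (not merely up to $o(1)$ errors) and produces a well-defined stable line and filtration at every admissible $k$. This is where the explicit triangularisation and summation estimates of Appendix~A carry the load; the determinant computation and the filtration/equivariance bookkeeping above are routine once that control is in place.
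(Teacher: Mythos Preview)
Your proposal is correct and follows essentially the same route as the paper's proof: pass to the $M$-eigenbasis to view the inward cocycle as a summable perturbation of the constant hyperbolic map $\frac{1}{\sqrt{\lambda}}D$, obtain the two exponents from that limit, and confirm Lyapunov regularity via the determinant identity (your Ces\`aro computation is the first half of Proposition~\ref{prop:det-cond}). The only cosmetic difference is that you package the perturbation step as ``roughness of exponential dichotomy'' and invoke an angle condition, whereas the paper does the same work by hand in Appendix~A (splitting into the cases $\lvert\alpha_n\rvert>\lvert\beta_n\rvert$ eventually versus $\lvert\alpha_n\rvert\leqslant\lvert\beta_n\rvert$ throughout, and constructing $E^{(1)}_{-}(k)$ by a compactness argument on $\partial B_1(0)$); you rightly identify this as the place where the quantitative load sits.
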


\begin{proof}
The equivalence of the two iterations, which emerge from one another by 
a simple change of basis, for Lyapunov theory follows from standard 
arguments; compare \cite[Sec.~1.2]{BP1}.

For any fixed $0 < k \leqslant \varepsilon$, with sufficiently small
$\varepsilon$ as detailed in Appendix A, the existence of the second
exponent follows from a compactness argument as explained before
Eq.~\eqref{eq:in-lower}. This also implies the filtration as stated;
see \cite{BP2} for a formulation of Lyapunov theory with complex
matrices as needed here.

For $k > \varepsilon$, we end up in our smaller interval after
finitely many iterations.  Provided no matrix in the iteration has
vanishing determinant, which is the reason for the extra condition in
the statement, the problem is thus reduced to the previous case, and
the filtration is transported back by a simple matrix inversion, which
also implies the claimed equivariance condition.  The situation for
negative $k$ maps to that for positive $k$ via complex conjugation,
and is thus completely analogous.

The exponents are now as stated, which follows from
Eqs.~\eqref{eq:in-upper} and \eqref{eq:in-lower}, and sum up to $\log
(\lambda - 1)$, which agrees with a specific limit,
\[
     \lim_{n\to\infty} \,\myfrac{1}{n} \ts \log \, \Bigl|
     \det \Bigl(\tfrac{1}{\sqrt{\lambda}\ts} B 
     \bigl(\tfrac{k}{\lambda^{n}}\bigr)\Bigr)
     \cdot \ldots \cdot \det \Bigl(\tfrac{1}{\sqrt{\lambda}\ts} 
     B\bigl( \tfrac{k}{\lambda}
     \bigr)\Bigr)\Bigr| \, = \, \log (3) - \log (\lambda) 
     \, = \, \log (\lambda - 1)\ts ,
\]
by an application of Proposition~\ref{prop:det-cond} for any of the
admissible values of $k$. This establishes the claimed regularity;
compare \cite[Sec.~1.3.2]{BP1}.
\end{proof}

\begin{remark}
  Let us note that the result of Proposition~\ref{prop:inward} can
  also be seen as a consequence of
  $\lim_{n\to\infty} B \bigl( \frac{k}{\lambda^{n}}\bigr) = B(0) = M$,
  which holds for all $k\in\RR$. Indeed, the resulting Lyapunov
  exponents of the matrix cocycle
  $B \bigl( \frac{k}{\lambda^{n}} \bigr) \cdot \ldots \cdot B \bigl(
  \frac{k}{\lambda} \bigr)$
  equal those of the iteration of $M$ alone, provided the overall
  determinant never vanishes. For all admissible $k$, which are those
  stated in Proposition~\ref{prop:inward}, the Lyapunov spectrum thus
  is $\{ \log (\lambda), \log (\lambda - 1)\}$, which implies the
  above result for the scaled iteration with
  $\frac{1}{\sqrt{\lambda}} B(k)$.  \exend
\end{remark}

Unfortunately, the determination of the Lyapunov structure for the
inward iteration does not yet suffice to rule out an absolutely continuous
component, because one of the relevant exponents is negative. To exclude 
the corresponding solution, one has to consider its behaviour in the
outward direction, where it will be unbounded. We thus turn
to a general investigation of the complementary iteration direction.

\subsection{Properties of outward iteration}

Here, we look at the asymptotic behaviour of the outward iteration,
first without the extra factor $\sqrt{\lambda}$ that is present in
Eq.~\eqref{eq:out-iter}. We shall indicate a reference to this version
by a tilde on the exponents, while we omit it when we speak of the
exponents for Eq.~\eqref{eq:out-iter} \emph{including} the extra
factor.  The pointwise extremal Lyapunov exponents, compare 
\cite[Ch.~3]{Viana} --- provided they exists as limits --- would now be
\begin{align*}
   \tilde{\chi}^{(1)}_{+} (k) \, & = - \lim_{n\to\infty} \myfrac{1}{n}
   \log \big\| B( k) \cdot \ldots \cdot
   B (\lambda^{n-1} k) \big\|  
\intertext{and} 
   \tilde{\chi}^{(2)}_{+} (k) \, & = \lim_{n\to\infty} \myfrac{1}{n}
   \log \big\| B^{-1} (\lambda^{n-1} k) \cdot \ldots \cdot
    B^{-1} (k)  \big\| ,
\end{align*}
together with
$\tilde{\chi}^{(1)}_{+}(k) \leqslant \tilde{\chi}^{(2)}_{+}
(k)$. Since $B^{-1} = \frac{1}{\det (B)} B^{\mathsf{ad}}$ for any 
invertible matrix $B$, with $B^{\mathsf{ad}}$ denoting the adjoint
matrix, the right hand side for $\tilde{\chi}^{(2)}_{+}$ can 
alternatively be written as
\[
    \tilde{\chi}^{(2)}_{+} (k) \,  = \lim_{n\to\infty} 
    \myfrac{1}{n} \log \big\| B^{\mathsf{ad}} (\lambda^{n-1} k) 
    \cdot \ldots \cdot B^{\mathsf{ad}} (k) \big\| \,
    - \lim_{n\to\infty} \myfrac{1}{n} \sum_{\ell=0}^{n-1}
    \log \, \bigl| \det (B (\lambda^{\ell} k)) \bigr|
\]
where we already know from Proposition~\ref{prop:det-cond} that the
second limit, for a.e.\ $k\in\RR$, exists and equals $0$. Note that
$\det(B(k))$ has zeros, but that $B(k)=0$ is impossible, wherefore
$\| B(k)\|>0$ holds for all $k\in\RR$.

To establish the existence of the limits, we would need some version
of Kingman's subadditive ergodic theorem and later its consequence in
the form of an Oseledec-type multiplicative ergodic theorem. The
difficulty here is that we are dealing with an infinite measure space
and with a non-stationary sequence of matrices. As was shown in
\cite{FSS}, this is still possible in some cases via the structure of
almost periodic functions with joint almost periods. Unfortunately, as
far as we are aware, this would still require our eigenvalue $\lambda$
to be a PV number, which it is not. We therefore change our perspective
by defining the extremal exponents as
\begin{equation}\label{eq:limsup}
\begin{split}
 \tilde{\chi}^{(1)}_{+} (k) \, & := - \limsup_{n\to\infty} \myfrac{1}{n}
   \log \big\| B( k) \cdot \ldots \cdot
   B (\lambda^{n-1} k) \big\|  \, \quad \text{and} \\ 
   \tilde{\chi}^{(2)}_{+} (k) \, & := \limsup_{n\to\infty} \myfrac{1}{n}
   \log \big\| B^{-1} (\lambda^{n-1} k) \cdot \ldots \cdot
    B^{-1} (k)  \big\| ,
\end{split}
\end{equation}
which always exist. As we shall see, this will still give us
useful bounds that are strong enough for our purposes.

Clearly, it does not matter which matrix norm we use to define the
exponents, because all norms on $\mathrm{Mat} (2, \CC)$ are equivalent
and convergence in one norm implies convergence in any other norm,
with the same limit due to the logarithm involved. Now, in the
Frobenius norm, we have
$\| A \|^{}_{\mathrm{F}} = \| A^{\mathsf{ad}} \|^{}_{\mathrm{F}}$ for
any $2\!\times\!2$-matrix. Since
$(A B)^{\mathsf{ad}} = B^{\mathsf{ad}} A^{\mathsf{ad}}$, the following
observation is immediate.

\begin{lemma}\label{lem:extremal}
  Let\/ $k\in\RR$ be any element of the set of numbers of full measure
  for which the second limit of Proposition~\emph{\ref{prop:det-cond}}
  holds. Then, when using the definition from Eq.~\eqref{eq:limsup},
  the extremal Lyapunov exponents satisfy the relation\/
  $\tilde{\chi}^{(1)}_{+} (k) + \tilde{\chi}^{(2)}_{+} (k) = 0$.
  
  In addition, whenever\/
  $\frac{1}{n} \log \ts \| B (k) \cdot \ldots \cdot B (\lambda^{n-1}
  k) \|$
  converges as\/ $n \to\infty$, both extremal Lyapunov exponents at
  this\/ $k$ exist as limits as well.  \qed
\end{lemma}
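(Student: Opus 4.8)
The plan is to reduce the statement to the two elementary adjugate identities recorded just before it, combined with the determinant limit from Proposition~\ref{prop:det-cond}. First I would set $P^{}_{n} := B(k)\ts B(\lambda k)\cdots B(\lambda^{n-1}k)$, so that $P_n^{-1} = B^{-1}(\lambda^{n-1}k)\cdots B^{-1}(k)$ is exactly the product occurring in $\tilde{\chi}^{(2)}_{+}(k)$; note that, for $k$ in the admissible set, each factor $B(\lambda^{\ell}k)$ is invertible, so $P_n$ makes sense. Since $A^{-1} = (\det A)^{-1}A^{\mathsf{ad}}$ and $(AB)^{\mathsf{ad}} = B^{\mathsf{ad}}A^{\mathsf{ad}}$, one gets $P_n^{-1} = (\det P_n)^{-1}(P_n)^{\mathsf{ad}}$ with $\det P_n = \prod_{\ell=0}^{n-1}\det B(\lambda^{\ell}k)$. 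Working in the Frobenius norm, for which $\|A\|^{}_{\mathrm{F}} = \|A^{\mathsf{ad}}\|^{}_{\mathrm{F}}$ on $2\!\times\!2$-matrices (norm independence of the exponents, already noted in the text, makes this choice harmless), this produces the exact identity $\|P_n^{-1}\|^{}_{\mathrm{F}} = \|P_n\|^{}_{\mathrm{F}}/\lvert\det P_n\rvert$.

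Taking $\frac{1}{n}\log$ then gives
\[
   \myfrac{1}{n}\log\|P_n^{-1}\|^{}_{\mathrm{F}}
   \, = \, \myfrac{1}{n}\log\|P_n\|^{}_{\mathrm{F}}
   \, - \, \myfrac{1}{n}\sum_{\ell=0}^{n-1}\log\bigl\lvert\det B(\lambda^{\ell}k)\bigr\rvert ,
\]
and by hypothesis $k$ lies in the full-measure set on which the second limit of Proposition~\ref{prop:det-cond} holds, so the last term is a null sequence. Hence $\frac{1}{n}\log\|P_n^{-1}\|^{}_{\mathrm{F}}$ and $\frac{1}{n}\log\|P_n\|^{}_{\mathrm{F}}$ differ only by a sequence tending to $0$; passing to the $\limsup$ yields $\tilde{\chi}^{(2)}_{+}(k) = \limsup_{n} \frac{1}{n}\log\|P_n\|^{}_{\mathrm{F}} = -\tilde{\chi}^{(1)}_{+}(k)$, i.e.\ $\tilde{\chi}^{(1)}_{+}(k)+\tilde{\chi}^{(2)}_{+}(k)=0$, as claimed.

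For the addendum, I would simply observe that if $\frac{1}{n}\log\|P_n\|^{}_{\mathrm{F}}$ converges, then the right-hand side of the displayed identity is a sum of two convergent sequences, so $\frac{1}{n}\log\|P_n^{-1}\|^{}_{\mathrm{F}}$ converges as well; for both sequences the $\limsup$ in Eq.~\eqref{eq:limsup} is then a genuine limit, whence both extremal exponents exist as limits at this $k$. I do not expect a real obstacle here: the only non-bookkeeping input is Proposition~\ref{prop:det-cond}, already established, and the sole point requiring care is to stay on the full-measure set of $k$ where it applies (and to recall that the adjugate norm identity is special to the $2\!\times\!2$ situation).
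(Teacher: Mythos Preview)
Your proposal is correct and follows essentially the same approach as the paper: the paper marks the lemma with \qed after recording, just before it, the adjugate identity $B^{-1}=\det(B)^{-1}B^{\mathsf{ad}}$, the multiplicativity $(AB)^{\mathsf{ad}}=B^{\mathsf{ad}}A^{\mathsf{ad}}$, the Frobenius-norm fact $\|A\|^{}_{\mathrm{F}}=\|A^{\mathsf{ad}}\|^{}_{\mathrm{F}}$ for $2\!\times\!2$-matrices, and the vanishing of the determinant average from Proposition~\ref{prop:det-cond}. You have simply spelled out the same computation in terms of $P_n$ and its inverse, which is a clean way to package it.
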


To continue a little in this direction, let us state a result that can
replace the usual argument with an invariant measure for the
transformation $k \mapsto \lambda k$.

\begin{lemma}\label{lem:constant}
  Let\/ $k\in\RR$ such that\/ $\det( B(\lambda^{m} k) ) \ne 0$ holds
  for all\/ $m\in\NN_{0}$, which only excludes a countable set. Then,
  the exponent\/ $\tilde{\chi}^{(1)}_{+} (k)$ exists as a limit if and
  only if\/ $\tilde{\chi}^{(1)}_{+} (\lambda k)$ does, and the two
  values agree.  In this case, one has\/
  $\tilde{\chi}^{(1)}_{+} (k) = \tilde{\chi}^{(1)}_{+} (\lambda^{m}
  k)$
  for all\/ $m\in\NN$, which is to say that the exponents exist and
  are constant along the sequence\/
  $(\lambda^{m} k)^{}_{m\in\NN_{0}}$.
\end{lemma}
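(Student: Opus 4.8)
The plan is to exploit the elementary telescoping relation between the two cocycle products that define the exponents at $k$ and at $\lambda k$, and to reduce the whole statement to a two-sided norm estimate. Write $P_n(k) := B(k)\, B(\lambda k) \cdots B(\lambda^{n-1} k)$ for the matrix product appearing in Eq.~\eqref{eq:limsup}. The key algebraic observation is $P_{n+1}(k) = B(k)\, P_n(\lambda k)$, since $P_n(\lambda k) = B(\lambda k) \cdots B(\lambda^{n} k)$. Because the hypothesis contains the case $m=0$, the matrix $B(k)$ is invertible, so we may equivalently write $P_n(\lambda k) = B(k)^{-1} P_{n+1}(k)$.

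From this, submultiplicativity of the norm gives
\[
   \myfrac{1}{\|B(k)\|}\, \|P_{n+1}(k)\| \,\leqslant\, \|P_n(\lambda k)\|
   \,\leqslant\, \|B(k)^{-1}\|\, \|P_{n+1}(k)\| \ts .
\]
Taking logarithms and dividing by $n$, the terms $\tfrac{1}{n}\log\|B(k)\|$ and $\tfrac{1}{n}\log\|B(k)^{-1}\|$ tend to $0$, while $\tfrac{1}{n}\log\|P_{n+1}(k)\| = \tfrac{n+1}{n}\cdot \tfrac{1}{n+1}\log\|P_{n+1}(k)\|$ with $\tfrac{n+1}{n}\to 1$; hence the sequences $\bigl(\tfrac{1}{n}\log\|P_n(\lambda k)\|\bigr)_n$ and $\bigl(\tfrac{1}{n}\log\|P_n(k)\|\bigr)_n$ have the same $\limsup$ and the same $\liminf$. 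The coincidence of the $\limsup$ yields $\tilde{\chi}^{(1)}_{+}(\lambda k) = \tilde{\chi}^{(1)}_{+}(k)$ directly from the definition in Eq.~\eqref{eq:limsup}; the coincidence of the $\liminf$ then shows that the honest limit exists at $\lambda k$ precisely when it exists at $k$ (a real sequence converges iff its $\limsup$ and $\liminf$ agree), with the same value.

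For the claim about the whole orbit $(\lambda^m k)^{}_{m\in\NN_{0}}$, I would simply iterate the one-step result: passing from $\lambda^{j} k$ to $\lambda^{j+1} k$ only uses $\det(B(\lambda^{j} k)) \neq 0$, which the hypothesis supplies for every $j\in\NN_{0}$. (This is the countable exceptional set being removed: $\det(B(k)) = -p(k)$ vanishes exactly on $\tfrac{1}{3}\ZZ \setminus \ZZ$, so the excluded set is $\bigcup_{m\geqslant 0} \lambda^{-m}\bigl(\tfrac{1}{3}\ZZ\setminus\ZZ\bigr)$.) Therefore $\tilde{\chi}^{(1)}_{+}(\lambda^m k) = \tilde{\chi}^{(1)}_{+}(k)$ for all $m$, and the limit exists at one of these points iff it exists at all of them. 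I do not expect a real obstacle here; the only point needing a little care is the bookkeeping between the always-defined $\limsup$ and the genuine limit, together with the harmless index shift $n \mapsto n+1$ and the factor $\tfrac{n+1}{n}$, neither of which affects $\limsup$ or $\liminf$.
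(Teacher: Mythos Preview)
Your proof is correct and follows essentially the same route as the paper: both use the cocycle relation $B^{(m+n)}(k)=B^{(m)}(k)\,B^{(n)}(\lambda^{m}k)$ together with the two-sided submultiplicativity bounds $\|AB\|\leqslant\|A\|\,\|B\|$ and $\|AB\|\geqslant\|B\|/\|A^{-1}\|$ to sandwich the sequences at $k$ and at $\lambda k$. The only cosmetic difference is that the paper writes the estimates for a general shift $\ell$ in one go, whereas you take $\ell=1$ and then iterate; your explicit handling of the $\tfrac{n+1}{n}$ factor and the $\limsup$/$\liminf$ bookkeeping is exactly what the paper subsumes under ``standard arguments''.
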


\begin{proof}
  The statement is trivial for $k=0$. Since $B(-k) = \overline{B(k)}$,
  it suffices to consider $k>0$.  Let us use the abbreviation
  $B^{(n)} (k) = B(k) \cdot \ldots \cdot B(\lambda^{n-1} k)$, whence
  one has the recursion relation
\begin{equation}\label{eq:B-rec}
   B^{(m+n)} (k) \, = \, B^{(m)} (k) \ts
   B^{(n)} (\lambda^{m} k)
\end{equation}
for any $m,n\in\NN$.  Observing $\| A B \| \leqslant \| A \| \| B \|$
as well as
$\| B \| = \| A^{-1} A B\| \leqslant \| A^{-1} \| \| A B \|$, hence
$\| A B \| \geqslant \| B \| / \| A^{-1} \|$, for any invertible $A$,
one can derive the estimates
\[
 \frac{ \log \big\| B^{(n+\ell)} (k) \big\|}{n} -
    \frac{\log \big\| B^{(\ell)}\bigr (k) \big\|}{n}
     \leqslant \,
    \frac{\log  \big\| B^{(n)} (\lambda^{\ell} k) \big\|}{n}
    \, \leqslant \,  \frac{ \log \big\| B^{(n+\ell)}
    (k) \big\|}{n} +
    \frac{\log \big\| \bigl(B^{(\ell)}\bigr)^{-1} (k) \big\|}{n} 
\]
and
\[
    \frac{ \log \big\| B^{(n)} (\lambda^{\ell} k) \big\|}{n+\ell} -
    \frac{\log \big\| \bigl(B^{(\ell)}\bigr)^{-1} (k) \big\|}{n+\ell} 
     \leqslant \,
    \frac{\log  \big\| B^{(n+\ell)} (k) \big\|}{n+\ell}
    \, \leqslant \,  \frac{ \log \big\| B^{(n)}
    (\lambda^{\ell} k) \big\|}{n+\ell} +
    \frac{\log \big\| B^{(\ell)} (k) \big\|}{n+\ell} 
\]
for any fixed $\ell\in\NN$. The claims now follow from standard
arguments.
\end{proof}

Let $p$ be the trigonometric
polynomial from Lemma~\ref{lem:ida}. Now, define
trigonometric polynomials $P_{n}$ by $P_{-1}\equiv 0$ and
$P_{0} \equiv 1$ together with the recursion
\begin{equation}\label{eq:P-rec}
   P_{n+1} (k) \, = \, P_{n} (k) + p(\lambda^{n} k) \,
   P_{n-1} (k)
\end{equation}
for $n\geqslant 0$, which gives $P_{1}\equiv 1$,
$P_{2} (k) = 1 + p (\lambda k)$ and so on.  These 
trigonometric polynomials
are related with the matrix function $B^{(n)} (k)$ from the proof of
Lemma~\ref{lem:constant} as follows.

\begin{fact}\label{fact:P-rec}
  For all\/ $n\in\NN$, one has
\[
    B^{(n)} (k) \, = \, \begin{pmatrix}
    P_{n} (k) & P_{n-1} (k) \\
    \, p(k) \, P_{n-1} (\lambda k) &
    p(k) \, P_{n-2} (\lambda k) \end{pmatrix},
\]
with the trigonometric polynomials from Eq.~\eqref{eq:P-rec},
together with
\[
  \big\| B^{(n)} (k) \big\|^{2}_{\mathrm{F}} \, = \,
  \lvert P_{n} (k) \rvert^{2} + 
  \lvert P_{n-1} (k) \rvert^{2}
  + \lvert p(k) \rvert^{2} \bigl(
  \lvert P_{n-1} (\lambda k) \rvert^{2} + 
  \lvert P_{n-2} (\lambda k) \rvert^{2} \bigr)
\]
for the squared Frobenius norm. Moreover, for all\/ $n\geqslant 0$
and\/ $k\in\RR$, one also has the recursion
\[
     P_{n+1} \bigl(\tfrac{k}{\lambda}\bigr) \, = \, 
     P_n (k) + p (k) \,  P_{n-1} (\lambda k) \ts .
\]
\end{fact}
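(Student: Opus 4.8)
The plan is an induction on $n\geqslant 1$ for the matrix formula, from which the Frobenius-norm identity is then immediate, and to deduce the new recursion for the $P_{n}$ from the \emph{other} one-step factorisation of $B^{(n)}$.

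I would start with the matrix identity. The base case $n=1$ merely reproduces $B^{(1)}(k)=B(k)=\left(\begin{smallmatrix}1&1\\ p(k)&0\end{smallmatrix}\right)$ from Eq.~\eqref{eq:B-explicit}, which matches the claimed form since $P_{0}\equiv P_{1}\equiv 1$ and $P_{-1}\equiv 0$. For the inductive step, I would use the recursion \eqref{eq:B-rec} in the form $B^{(n+1)}(k)=B^{(n)}(k)\ts B(\lambda^{n}k)$, substitute the induction hypothesis for $B^{(n)}(k)$, and multiply out by the explicit matrix $B(\lambda^{n}k)=\left(\begin{smallmatrix}1&1\\ p(\lambda^{n}k)&0\end{smallmatrix}\right)$. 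Then I read off the four entries: the $(1,2)$- and $(2,2)$-entries come out as $P_{n}(k)$ and $p(k)\ts P_{n-1}(\lambda k)$ with nothing to check; the $(1,1)$-entry becomes $P_{n}(k)+p(\lambda^{n}k)\ts P_{n-1}(k)$, which is $P_{n+1}(k)$ by the defining recursion \eqref{eq:P-rec}; and the $(2,1)$-entry becomes $p(k)\bigl(P_{n-1}(\lambda k)+p(\lambda^{n}k)\ts P_{n-2}(\lambda k)\bigr)$, which equals $p(k)\ts P_{n}(\lambda k)$ upon applying \eqref{eq:P-rec} once more, now at argument $\lambda k$ and index $n-1$. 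This closes the induction.

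Once the matrix formula is in place, the expression for $\big\|B^{(n)}(k)\big\|^{2}_{\mathrm{F}}$ follows at once, being the sum of the squared moduli of the four entries. For the last assertion I would invoke \eqref{eq:B-rec} with the complementary split, $B^{(n+1)}(k/\lambda)=B(k/\lambda)\ts B^{(n)}(k)$, insert the matrix formula on both sides, and compare the $(1,1)$-entries: the left side is $P_{n+1}(k/\lambda)$, while the right side equals $P_{n}(k)+p(k)\ts P_{n-1}(\lambda k)$. (The remaining three entries then furnish a harmless consistency check.)

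I do not expect a genuine obstacle: everything reduces to one induction and two explicit $2\!\times\!2$ matrix multiplications. The only point demanding a little care is the bookkeeping of arguments --- invoking the scalar recursion \eqref{eq:P-rec} at the right argument ($k$ for the $(1,1)$-entry, $\lambda k$ with shifted index for the $(2,1)$-entry) and choosing the correct one of the two one-step factorisations of $B^{(n+1)}$ from \eqref{eq:B-rec} in each of the two places it is used.
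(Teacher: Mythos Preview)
Your proposal is correct and follows essentially the same approach as the paper: induction via $B^{(n+1)}(k)=B^{(n)}(k)\,B(\lambda^{n}k)$ for the matrix formula, the Frobenius norm as an immediate consequence, and the alternative recursion by comparing entries in the complementary factorisation $B^{(n+1)}(k/\lambda)=B(k/\lambda)\,B^{(n)}(k)$.
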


\begin{proof}
The first claim follows by induction, where one can employ
Eq.~\eqref{eq:B-rec} in the form $B^{(n+1)} (k) = 
B^{(n)} (k) \, B (\lambda^n k)$ for $n\geqslant 1$. 
The second formula is then an immediate
consequence. Finally, the alternative recursion follows
inductively via a different use of Eq.~\eqref{eq:B-rec}, 
this time giving $B^{(n+1)} (k) = B (k) \, B^{(n)} (\lambda k)$.
Comparing with the first formula, and replacing $k$ by
$\frac{k}{\lambda}$, leads to the alternative recursion.
\end{proof}

For any $n\in\NN$, the mapping
$k \mapsto \| B^{(n)} (k) \|^{2}_{\mathrm{F}}$ defines a non-negative
trigonometric polynomial by Fact~\ref{fact:P-rec}.  Moreover, as a
consequence of Fact~\ref{fact:quasi}, it is a \emph{quasiperiodic}
function with two fundamental frequencies, $1$ and $\lambda$. This is
so because all higher powers of $\lambda$ can be written as an integer
linear combination of $1$ and $\lambda$ due to the relation
$\lambda^2 = \lambda + 3$.  In particular, one has
\begin{equation}\label{eq:lam-powers}
   \lambda^n \, = \, a^{}_{n} \lambda + b^{}_{n}
   \quad \text{with} \quad
   \binom{a^{}_{n}}{b^{}_{n}}
   \, = \, M^n  \binom{0}{1}
\end{equation}
where $M$ is the substitution matrix from Eq.~\eqref{eq:submat}. Note
that this holds for all $n\in\ZZ$, with $a^{}_{0} = 0$ and
$b^{}_{0} = 1$.  Since $\lambda$ is not a unit, only the coefficients
with non-negative index are integers, while the other ones are
rational numbers. Let us note some further properties.

\begin{fact}\label{fact:gcd}
  For all\/ $n\in\NN$, the coefficients defined by
  Eq.~\eqref{eq:lam-powers} satisfy 
\begin{enumerate}\itemsep=2pt
\item $a^{}_{n}\equiv 1 \bmod{3}$ and\/ $b^{}_{n}\equiv 0 \bmod{3}$, 
   as well as
\item $\gcd(a^{}_{n},a^{}_{n+1})=1$ and\/ $\gcd(b^{}_{n},b^{}_{n+1})=3$.
\end{enumerate}
\end{fact}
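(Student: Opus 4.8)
The plan is to extract the scalar recursions hidden in $\binom{a^{}_{n}}{b^{}_{n}}=M^{n}\binom{0}{1}$ and then argue by induction. Writing out the single-step relation $\binom{a^{}_{n+1}}{b^{}_{n+1}}=M\binom{a^{}_{n}}{b^{}_{n}}$ with $M=\left(\begin{smallmatrix}1&1\\3&0\end{smallmatrix}\right)$ from Eq.~\eqref{eq:submat} gives
\[
     a^{}_{n+1} \, = \, a^{}_{n} + b^{}_{n} \ts , \qquad
     b^{}_{n+1} \, = \, 3\ts a^{}_{n} \ts ,
\]
with $a^{}_{0}=0$ and $b^{}_{0}=1$. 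Hence $b^{}_{n}=3\ts a^{}_{n-1}$ for all $n\geqslant 1$, and substituting this back produces the three-term recursion
\[
     a^{}_{n+1} \, = \, a^{}_{n} + 3\ts a^{}_{n-1} \qquad (n\geqslant 1) \ts ,
\]
which is nothing but the scalar shadow of $\lambda^{n+1}=\lambda^{n}+3\ts\lambda^{n-1}$ (equivalently, Cayley--Hamilton for $M$), with initial data $a^{}_{0}=0$ and $a^{}_{1}=1$.

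For the first part, the congruence $b^{}_{n}\equiv 0 \bmod 3$ for $n\geqslant 1$ is immediate from $b^{}_{n}=3\ts a^{}_{n-1}$. Then, for $n\geqslant 2$, the identity $a^{}_{n}=a^{}_{n-1}+b^{}_{n-1}$ gives $a^{}_{n}\equiv a^{}_{n-1} \bmod 3$, so an induction based at $a^{}_{1}=1$ yields $a^{}_{n}\equiv 1 \bmod 3$ for all $n\geqslant 1$. For the second part, I would run a Euclidean argument that uses the first part. From $a^{}_{n+1}=a^{}_{n}+3\ts a^{}_{n-1}$ one gets $\gcd(a^{}_{n+1},a^{}_{n})=\gcd(3\ts a^{}_{n-1},a^{}_{n})$ for $n\geqslant 1$; since $a^{}_{n}\equiv 1 \bmod 3$ we have $\gcd(3,a^{}_{n})=1$, whence $\gcd(3\ts a^{}_{n-1},a^{}_{n})=\gcd(a^{}_{n-1},a^{}_{n})$, and descending all the way to $\gcd(a^{}_{1},a^{}_{0})=\gcd(1,0)=1$ gives $\gcd(a^{}_{n},a^{}_{n+1})=1$ throughout. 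Finally, $b^{}_{n}=3\ts a^{}_{n-1}$ and $b^{}_{n+1}=3\ts a^{}_{n}$ give $\gcd(b^{}_{n},b^{}_{n+1})=3\ts\gcd(a^{}_{n-1},a^{}_{n})=3$ by the coprimality just proved.

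There is no genuine obstacle here: the statement is an elementary fact about the sequence $(a^{}_{n})_{n\geqslant 0}$, and the only points needing a little care are the index bookkeeping near $n=0$ --- note that $b^{}_{0}=1$ is \emph{not} divisible by $3$, so the first part genuinely starts at $n=1$ --- and the observation that the $\gcd$-recursion loses the spurious factor $3$ \emph{precisely} because of the congruence $a^{}_{n}\equiv 1 \bmod 3$, which is also why the two parts of the Fact are naturally coupled. The same mechanism, namely $\lvert\det M\rvert=3$ prime together with a congruence forcing $\gcd(3,a^{}_{n})=1$, is what one would invoke for analogous inflation rules on two letters.
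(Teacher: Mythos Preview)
Your proof is correct and follows essentially the same approach as the paper's: derive $b^{}_{n}=3\ts a^{}_{n-1}$ and the three-term recursion $a^{}_{n+1}=a^{}_{n}+3\ts a^{}_{n-1}$, establish the congruences by induction, then run the Euclidean descent on $\gcd(a^{}_{n},a^{}_{n+1})$ using $a^{}_{n}\equiv 1\bmod 3$ to strip the factor $3$. The only cosmetic difference is the base case for the gcd claim---you descend to $\gcd(a^{}_{1},a^{}_{0})=1$, whereas the paper starts from $\gcd(a^{}_{1},a^{}_{2})=1$---and your closing remarks about $\lvert\det M\rvert=3$ and generalisations go a little beyond what the paper records.
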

\begin{proof}
  Observe first from Eq.~\eqref{eq:lam-powers} that $b^{}_{n}=3\ts
  a^{}_{n-1}$ and $a^{}_{n+1}=a^{}_{n}+3\ts a^{}_{n-1}$ for all
  $n\in\NN$.  Clearly, the claims on the $b^{}_{n}$ thus follow from
  those on the $a^{}_{n}$.  Since $a^{}_{0}=0$ and
  $a^{}_{1}=b^{}_{0}=1$, the congruence relation for the $a^{}_{n}$ is
  clear by induction.

  Since also $a^{}_{2}=1$, we have $\gcd(a^{}_{1},a^{}_{2})=1$. For
  $n\in\NN$, the recursion gives
\[ 
    \gcd(a^{}_{n},a^{}_{n+1}) \, = \, \gcd(a^{}_{n},a^{}_{n}+3\ts a^{}_{n-1}) 
    \, = \,  \gcd(a^{}_{n},3\ts a^{}_{n-1}) \, = \, \gcd(a^{}_{n},a^{}_{n-1}),
\]
where the last step follows from the congruence property previously
established. The claim is now clear by induction.
\end{proof}

Before we continue, let us state a useful property. Recall that a
function $f $ from $C_{\mathsf{u}} (\RR)$, the space of uniformly
continuous and bounded functions on $\RR$, is Bohr (or uniformly)
\emph{almost periodic} if the set of $\varepsilon$-almost periods
$P^{\infty}_{\varepsilon} := \{ t \in \RR : \| f - T^{}_{t} f
\|^{}_{\infty} < \varepsilon\}$
is relatively dense in $\RR$ for every $\varepsilon>0$. 
Here, $\|.\|^{}_{\infty}$ denotes the supremum norm on
$C_{\mathsf{u}} (\RR)$, and $T^{}_{t} f$ is defined by
$\bigl(T^{}_{t} f \bigr) (x) = f(x-t)$; see \cite{Cord} for
background.  

\begin{fact}\label{fact:log-fp}
  Let\/ $f$ be a real-valued Bohr almost periodic function such
  that\/ $f(x) \geqslant a > 0$ for all\/ $x\in\RR$ and some fixed\/
  $a$. Then, $\log (f)$ is Bohr almost periodic as well. 
\end{fact}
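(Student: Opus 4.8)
The plan is to show that $\log(f)$ inherits the defining relative density property of almost periods from $f$, using that $\log$ is Lipschitz on any interval bounded away from zero. First I would note that $f$, being Bohr almost periodic, is bounded, so there is a constant $b$ with $a \leqslant f(x) \leqslant b$ for all $x\in\RR$. On the compact interval $[a,b]$, the function $\log$ is Lipschitz with some constant $L = L(a) = 1/a$ (indeed $|\log(s) - \log(t)| \leqslant \frac{1}{a} |s-t|$ for all $s,t \in [a,b]$, by the mean value theorem). This is the key quantitative input: it turns sup-norm control of $f - T_t f$ into sup-norm control of $\log(f) - T_t \log(f)$.

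Next I would carry out the almost-periodicity argument itself. Fix $\varepsilon > 0$. Since $f$ is Bohr almost periodic, the set $P^{\infty}_{\varepsilon/L}(f) = \{ t\in\RR : \| f - T^{}_{t} f\|^{}_{\infty} < \varepsilon/L\}$ is relatively dense in $\RR$. For any $t$ in this set and any $x\in\RR$, both $f(x)$ and $f(x-t) = (T^{}_{t}f)(x)$ lie in $[a,b]$, so
\[
   \bigl| \log(f(x)) - \log\bigl((T^{}_{t}f)(x)\bigr) \bigr|
   \, \leqslant \, L \, \bigl| f(x) - (T^{}_{t}f)(x) \bigr|
   \, \leqslant \, L \, \| f - T^{}_{t} f \|^{}_{\infty}
   \, < \, \varepsilon \ts .
\]
Taking the supremum over $x$ gives $\| \log(f) - T^{}_{t}(\log f) \|^{}_{\infty} < \varepsilon$, where I use that $T^{}_{t}(\log f) = \log(T^{}_{t} f)$. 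Hence $P^{\infty}_{\varepsilon/L}(f) \subseteq P^{\infty}_{\varepsilon}(\log f)$, and the left-hand set is relatively dense, so $P^{\infty}_{\varepsilon}(\log f)$ is relatively dense as well. Since $\varepsilon > 0$ was arbitrary, this is the required condition.

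To finish, I would check the standing regularity hypothesis: $\log(f)$ must lie in $C_{\mathsf{u}}(\RR)$. It is bounded, since its values lie in $[\log a, \log b]$; and it is uniformly continuous as the composition of the uniformly continuous function $f$ (every Bohr almost periodic function is uniformly continuous) with the function $\log$, which is Lipschitz hence uniformly continuous on the range $[a,b]$ of $f$. Therefore $\log(f) \in C_{\mathsf{u}}(\RR)$ and satisfies the relative density condition, so it is Bohr almost periodic. I do not anticipate a genuine obstacle here; the only point that needs a little care is the uniformity — one must use the \emph{global} lower bound $f \geqslant a$ so that the Lipschitz constant of $\log$ is a single number valid everywhere, rather than something that degenerates as $f$ approaches values near which $\log$ is steep. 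This is exactly why the hypothesis $f(x) \geqslant a > 0$ (as opposed to merely $f > 0$ pointwise) is essential.
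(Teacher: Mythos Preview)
Your proof is correct and follows essentially the same approach as the paper's: both observe that $f$ takes values in a compact interval $[a,b]$ and then use the uniform continuity of $\log$ on $[a,b]$ to transfer the relatively dense almost periods of $f$ to $\log(f)$. Your version is slightly more explicit in that you quantify the uniform continuity via the Lipschitz constant $L=1/a$ and you additionally verify membership in $C_{\mathsf{u}}(\RR)$, but the underlying argument is the same.
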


\begin{proof}
  Since $f$ is Bohr almost periodic, it is bounded, so
  $f(x) \in [a,b]$ for some $b\geqslant a >0$ by assumption. Now, the
  logarithm is uniformly continuous on $[a,b]$, which means that, for
  any $\varepsilon>0$, there is a
  ${\delta} = {\delta} (\varepsilon) > 0$ such that
  $\lvert \ts \log (x) -\log (y) \rvert < {\varepsilon}$ whenever
  $\lvert x-y \rvert < \delta$.

  Let $\varepsilon>0$ be arbitrary and let
  $\delta = \delta(\varepsilon)$.  Now, let $t$ be any of the
  relatively dense $\delta$-almost periods of $f$, hence
  $\lvert f(x) - f(x-t) \rvert < \delta$ for all $x\in\RR$. Then, $t$
  also is an $\varepsilon$-almost period of $\log(f)$, which
   implies the claim.
\end{proof}

\begin{lemma}\label{lem:away}
  For any\/ $n\in\NN$, there is a constant\/ $\delta_n >0$ such 
  that\/ $\| B^{(n)} (k)\|^{2}_{\mathrm{F}} \geqslant \delta_n $ 
  holds for all\/ $k\in\RR$. Consequently, also\/ 
  $k \mapsto \log \| B^{(n)} (k) 
  \|^{2}_{\mathrm{F}}$ defines a Bohr almost periodic function.
\end{lemma}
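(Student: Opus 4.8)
The plan is to reduce the estimate to a non-vanishing statement on a torus. Set $f_n(k):=\|B^{(n)}(k)\|^{2}_{\mathrm F}$. By Fact~\ref{fact:P-rec} this is a non-negative trigonometric polynomial, and since $\lambda^{2}=\lambda+3$ forces every $\lambda^{m}$ into $\ZZ+\ZZ\lambda$, an iteration of Fact~\ref{fact:quasi} (as already remarked after Fact~\ref{fact:P-rec}) shows that $f_n$ is quasiperiodic with the two fundamental frequencies $1$ and $\lambda$. Hence $f_n(k)=G_n(\lambda k,k)$ for a continuous function $G_n\colon\TT^{2}\to[0,\infty)$; explicitly, $G_n(x,y)=\|\widetilde B^{(n)}(x,y)\|^{2}_{\mathrm F}$, where $\widetilde B^{(n)}(x,y):=\widetilde B(x,y)\,\widetilde B\bigl((x,y)M\bigr)\cdots\widetilde B\bigl((x,y)M^{n-1}\bigr)$ and $\widetilde B(x,y)=\left(\begin{smallmatrix}1 & 1\\ \widetilde p(x,y) & 0\end{smallmatrix}\right)$ as in Fact~\ref{fact:quasi}. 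Because $1$ and $\lambda$ are rationally independent, the set $\{(\lambda k,k)+\ZZ^{2}:k\in\RR\}$ is dense in $\TT^{2}$, so $\inf_{k\in\RR}f_n(k)=\min_{\TT^{2}}G_n$, the minimum being attained by compactness. It therefore suffices to show that $\widetilde B^{(n)}(x,y)\neq 0$ for every $(x,y)\in\TT^{2}$; one may then take $\delta_n:=\min_{\TT^{2}}G_n>0$.

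I would establish this non-vanishing by induction on $n$, peeling off the leftmost factor. If $\widetilde p(x,y)\neq 0$, then $\widetilde B(x,y)$ is invertible and $\widetilde B^{(n)}(x,y)=\widetilde B(x,y)\,\widetilde B^{(n-1)}\bigl((x,y)M\bigr)\neq 0$ by the inductive hypothesis; the base case $\widetilde B^{(1)}=\widetilde B$ is immediate since its first row is $(1,1)$. If $\widetilde p(x,y)=0$, i.e.\ $y\in\{1/3,2/3\}$, the leftmost factor collapses to $\left(\begin{smallmatrix}1&1\\0&0\end{smallmatrix}\right)$, the bottom row of $\widetilde B^{(n)}(x,y)$ vanishes automatically by the formula in Fact~\ref{fact:P-rec}, and $\widetilde B^{(n)}(x,y)=0$ would force its top row $\bigl(\widetilde P_n(x,y),\widetilde P_{n-1}(x,y)\bigr)$ to vanish, where $\widetilde P_j$ is the torus lift of the polynomial $P_j$ from Eq.~\eqref{eq:P-rec}, obeying $\widetilde P_{-1}\equiv 0$, $\widetilde P_0\equiv 1$, and $\widetilde P_{j+1}=\widetilde P_j+\widetilde p\bigl((x,y)M^{j}\bigr)\widetilde P_{j-1}$. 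Running this recursion downward from index $n-1$, the pair $\widetilde P_{j}=\widetilde P_{j-1}=0$ propagates to $\widetilde P_{j-1}=\widetilde P_{j-2}=0$ whenever the coefficient $\widetilde p\bigl((x,y)M^{j-1}\bigr)$ is non-zero, and this would eventually contradict $\widetilde P_0\equiv 1$. Hence $\widetilde B^{(n)}(x,y)=0$ can occur only if $\widetilde p$ vanishes at a second point of the orbit $(x,y),(x,y)M,\dots,(x,y)M^{n-1}$.

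Dealing with this residual case — $\widetilde p$ vanishing at two or more points of an $M$-orbit — is the main obstacle. Here I would exploit the structure of the product directly: listing the zero-positions of $\widetilde p$ along the orbit as $0=m_1<\dots<m_s$, the corresponding factors are rank-one matrices $\left(\begin{smallmatrix}1&1\\0&0\end{smallmatrix}\right)$ separated by ``bridges'' of invertible factors, and a short calculation shows that the $i$-th bridge (between $m_i$ and $m_{i+1}$) contributes a scalar $1+\sum_{b}\widetilde p(b)$, the sum over its points $b$, while the trailing block contributes a matrix of the form $\left(\begin{smallmatrix}*&*\\0&0\end{smallmatrix}\right)$ whose top row — being the descent vector of the previous paragraph — is non-zero. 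Thus $\widetilde B^{(n)}(x,y)=0$ forces $1+\sum_{b}\widetilde p(b)=0$ for some bridge. To exclude this I would use $\widetilde p(u,v)=0\Leftrightarrow v\in\{1/3,2/3\}$, the explicit orbit coordinates $(x,y)M^{m}=\bigl(a_{m+1}x+3a_{m}y,\;a_{m}x+3a_{m-1}y\bigr)$ obtained from Eq.~\eqref{eq:lam-powers}, and the congruences $a_m\equiv 1\bmod 3$ of Fact~\ref{fact:gcd}: once $\widetilde p$ vanishes at an orbit point one has $3y\equiv 0$, which pins the subsequent coordinates into the one-parameter family $y_{m_i+j}\equiv a_j x_{m_i}$ and controls where the next zero occurs, so that each bridge sum becomes an explicit finite trigonometric sum in a single variable. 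Combined with the elementary fact that $\widetilde p$ takes the value $-1$ only at the torus points $(\tfrac12,\tfrac12)$, $(\tfrac14,\tfrac14)$, $(\tfrac34,\tfrac34)$, whose second coordinates lie outside $\{1/3,2/3\}$, one checks that no such bridge sum equals $-1$. This proves that $\widetilde B^{(n)}$ is nowhere zero on $\TT^{2}$, hence $\|B^{(n)}(k)\|^{2}_{\mathrm F}\ge\delta_n>0$.

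For the final assertion, $f_n$ is quasiperiodic and therefore Bohr almost periodic, and by the above it satisfies $\delta_n\le f_n(k)\le\|G_n\|_{\infty}$ for all $k\in\RR$; Fact~\ref{fact:log-fp} then shows that $\log\|B^{(n)}(\cdot)\|^{2}_{\mathrm F}=\log f_n$ is Bohr almost periodic. The one genuinely delicate point is the bridge-sum estimate in the third paragraph; the torus reduction and the descent argument are routine.
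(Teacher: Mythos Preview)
Your torus reduction and inductive scheme match the paper's approach, and the descent in the second paragraph is sound. The gap is in the bridge computation of the third paragraph. The claimed bridge scalar $1+\sum_b\tilde p(b)$ is correct only for bridges of length $\le 2$: with three invertible factors $C_j=\left(\begin{smallmatrix}1&1\\p_j&0\end{smallmatrix}\right)$, the first-column sum of $C_1C_2C_3$ equals $1+p_1+p_2+p_3+p_1p_3$, and longer bridges pick up further cross terms. Bridges of length $\ge 3$ do arise (for instance $y=\tfrac13$, $x=\tfrac{1}{21}$ gives one between orbit positions $0$ and $4$, since $a_4=7$), so the error is live. Moreover, even granting your formula, locating the solutions of $\tilde p=-1$ only addresses length-$1$ bridges; for length $\ge 2$ one needs the full sum to avoid $-1$, and the closing phrase ``one checks'' is doing all the work without an argument.

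The paper bypasses the bridge decomposition. It observes that for $y\in Z$ the top row of $\tilde B^{(n+1)}(x,y)$ is $\bigl(\tilde P_{n+1}(x,y),\tilde P_n(x,y)\bigr)$ and then proves, by induction on Eq.~\eqref{eq:tilde} together with the congruence $a_m\equiv 1\bmod 3$ from Fact~\ref{fact:gcd}, the closed form
\[
\tilde P_m\bigl(x,\tfrac13\bigr)\,=\,1+q+\cdots+q^{a_{m+1}-1},\qquad q=\ee^{2\pi\ii x}.
\]
This vanishes precisely when $q^{a_{m+1}}=1$ with $q\ne 1$; since $\gcd(a_{n+1},a_{n+2})=1$ by Fact~\ref{fact:gcd}, the zero sets of $\tilde P_n$ and $\tilde P_{n+1}$ on the line $y=\tfrac13$ are disjoint, so the top row never vanishes. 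In fact, via the alternative recursion in Fact~\ref{fact:P-rec}, the first bridge scalar in your decomposition equals exactly $\tilde P_{m_2}(x,y)$ --- so the paper's geometric-sum formula is precisely the missing ingredient, and once one has it the bridge bookkeeping becomes superfluous.
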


\begin{proof}
  The second claim follows from the first by Fact~\ref{fact:log-fp}.
  To show the first claim, observe that, with $0 \leqslant \lvert p(k)
  \rvert \leqslant 3$, we have
\[
   \delta^{}_{1} \, = \, 2 \, \leqslant \,
   2 + \lvert p(k) \rvert^{2} \, = \, 
   \| B (k) \|^{2}_{\mathrm{F}} \, \leqslant \,11 \ts .
\]
In particular, the rank of $B^{(1)} (k) = B(k)$ is at least $1$, and
it is $2$ whenever $\det(B(k)) \ne 0$. Similarly, one finds
\[
  \delta^{}_{2} \, = \, 1 \, \leqslant \,
  1 + 2 \ts \lvert p(k) \rvert^{2} +
  \lvert 1 + p(\lambda k)\rvert^{2} \, = \,
  \| B^{(2)} (k) \|^{2}_{\mathrm{F}} \, \leqslant \, 35 \ts ,
\]
while the lower bounds become considerably more involved 
after this. Still, we can see inductively
that $\| B^{(n)} (k) \|^{2}_{\mathrm{F}} > 0$ for all $n\in\NN$ and
all $k\in\RR$ as follows. 

Observe that $\det( B^{(n)} (k))=0$ if and only if $k\in
\bigcup_{\ell=0}^{n-1} \lambda^{-\ell} Z$ with $Z= \ZZ +
\bigl\{ \frac{1}{3}, \frac{2}{3} \bigr\}$ as before. Then, the
zero set of $\det(B(k))$, which is $Z$, and that of
$\det(B^{(n)} (\lambda k))$, which is $\bigcup_{\ell=1}^{n}
\lambda^{-\ell} Z$, are disjoint. Now, the rank of
$B^{(n+1)} (k) = B(k) \, B^{(n)} (\lambda k)$ is $2$ unless
$\det( B^{(n+1)} (k)) = 0$. In the latter case, we either
have $k\in Z$, where $B(k)$ has rank $1$ while 
$B^{(n)} (\lambda k)$
is of full rank, or $k\in \bigcup_{\ell=1}^{n} \lambda^{-\ell}
Z$, where $B(k)$ has full rank and $B^{(n)} (\lambda k)$ has
rank $1$ by induction. In these two cases, $B^{(n+1)} (k)$
still has rank $1$ then, so cannot be the $0\ts $-matrix.
Altogether, this implies $\| B^{(n+1)} (k) \|^{2}_{\mathrm{F}}
>0$ for all $k\in\RR$.

The harder part to show is that, for fixed $n$,
$\| B^{(n)} (k) \|^{2}_{\mathrm{F}}$ is bounded away from $0$. As this
function is quasiperiodic with fundamental frequencies $1$ and
$\lambda$ as a consequence of Fact~\ref{fact:quasi}, it can be written
as
\[
   \| B^{(n)} (k) \|^{2}_{\mathrm{F}} \, = \,
   F_{n} (x,y) \big|_{x=\lambda k, \, y= k} 
\]
with $F_n \geqslant 0$ a smooth, doubly $1$-periodic function 
on $\RR^2$.  Now, our claim is equivalent to
$F_{n} (x,y) \geqslant \delta_{n}>0$ on the compact set $[0,1]^2$.
In fact, since $F_n$ is continuous, the latter property follows if we 
show that $F_n$ has no zero in $[0,1]^2$, as we then get
\[
    \delta_n \, := \, \min \ts 
     \{ F_n (x,y) : 0 \leqslant x,y \leqslant 1 \} 
     \, > \, 0 \ts .
\]

By Fact~\ref{fact:quasi}, we have a representation $B^{(n)} (k) =
\tilde{B}^{(n)} (x,y) \big|_{x=\lambda k, \, y= k}$ with
\begin{equation}\label{eq:xy-trick}
    \tilde{B}^{(n+1)} (x,y) \, = \, \tilde{B} (x,y) \,
    \tilde{B}^{(n)} (x\nts + \nts 3y, x) \, = \,  \tilde{B}^{(n)} (x,y) \,
    \tilde{B} (a^{}_{n+1} x + b^{}_{n+1} y, a^{}_{n} x + b^{}_{n} y)
\end{equation}
for $n\geqslant 0$ and the coefficients from
Eq.~\eqref{eq:lam-powers}.  In particular, if $\tilde{p}$ is the
function from Fact~\ref{fact:quasi} and $\tilde{P}_n$ is the
corresponding `lift' of $P_n$, Eq.~\eqref{eq:P-rec} and
Fact~\ref{fact:P-rec} turn into the analogous relations for
$\tilde{p}$, $\tilde{P}_{n}$ and $\tilde{B}^{(n)}$, where our
recursion now reads
\begin{equation}\label{eq:tilde}
\tilde{P}^{}_{n+1} (x,y) \, = \, \tilde{P}^{}_n (x,y) +
\tilde{p}(a^{}_{n+1} x + b^{}_{n+1} y, a^{}_{n} x + b^{}_{n} y ) \,
\tilde{P}^{}_{n-1} (x,y)
\end{equation}
for $n\in\NN$.  If we show that our previous argument generalises to
give $\tilde{B}^{(n)} (x,y) \ne 0$ for all $n\in\NN$ and all
$x,y \in [0,1]$, we are done.

Since this property is clearly true for $n=1$, assume that it holds
for some fixed $n\geqslant 1$. We see from the first identity in
Eq.~\eqref{eq:xy-trick} that $\tilde{B}^{(n+1)} (x,y)$ can only vanish
when both factors have rank less than $2$, which means (by our
induction hypothesis and the structure of $\tilde{B}$) that both must
have rank $1$ because none can be the $0$-matrix. This implies
$p(x,y)=0$ and hence $y \in Z$, whence we get
\[ 
     \tilde{B}^{(n+1)} (x,y)  \big|_{y\in Z}\, = \, 
     \begin{pmatrix} 1 & 1 \\ 0 & 0 \end{pmatrix}  
     \, B^{(n)} (x+3y,x) \big|_{y\in Z} \, = \,
     \begin{pmatrix} \tilde{P}_{n+1} (x,y)\big|_{y\in Z} 
     & \tilde{P}_n (x,y)\big|_{y\in Z} \\  0 & 0  \end{pmatrix}.
\]
This can only be the $0$-matrix if $\tilde{P}_n$ and $\tilde{P}_{n+1}$
have a common zero subject to the constraint $y \in Z$.
We will now show that this is impossible.  

Since all $\tilde{P}_{m}$ are $1$-periodic in both arguments, the
condition $y\in Z$ means that we only need to consider the values of
$\tilde{P}_{n}$ and $\tilde{P}_{n+1}$ on the line segments
$\bigl\{0\leqslant x\leqslant 1 , \, y=\frac{1}{3} \bigr\}$ and
$\bigl\{0\leqslant x\leqslant 1 ,  \, y=\frac{2}{3} \bigr\}$.  We clearly
have $\overline{\tilde{p}(x,y)} = p(-x,-y)$ for all $x,y\in\RR$, which
implies
\[
    \overline{\tilde{P}_{m}(x,y)} \, = \, 
    \tilde{P}_{m}(-x,-y) \, = \, 
    \tilde{P}_{m}(1-x,1-y)
\]
for $m \in \{ n,n+1\}$.  Consequently, it suffices to consider
$y=\frac{1}{3}$. With $q=\ee^{2\pi\ii x}$, one finds
\[
    \tilde{P}_{m}\bigl(x,\tfrac{1}{3} \bigr) \, = \, 
    1+q+\ldots + q^{a^{}_{m+1}-1}
\]
for $m\geqslant 0$, which can be shown by induction from
Eq.~\eqref{eq:tilde} together with 
\[
  \tilde{p}(a^{}_{m+1} x + b^{}_{m+1} y, 
  a^{}_{m} x + b^{}_{m} y)\big|_{y=\frac{1}{3}} \,
  = \, \tilde{p}(a^{}_{m+1} x , a^{}_{m} x ) \, = \,
  q^{a^{}_{m+1}} \bigl(1+q^{a^{}_{m}} + q^{2a^{}_{m}}\bigr),
\]
where we have used Fact~\ref{fact:gcd}\ts(1). Clearly,
$\tilde{P}_{m}\bigl(x, \frac{1}{3} \bigr)$ vanishes if and only if
$q^{a^{}_{m+1}}=1$ with $q\neq 1$. By Fact~\ref{fact:gcd}\ts(2), the
integers $a^{}_{n}$ and $a^{}_{n+1}$ are coprime which means that the
zero sets of $\tilde{P}_{n}$ and of $\tilde{P}_{n+1}$ along the line
$\bigl\{y=\frac{1}{3}\bigr\}$ are disjoint, and our
argument is complete.
\end{proof}

Let us now define $L_n (k) = \log \| B^{(n)} (k) \|$, where
$\|.\|$ is any (fixed) matrix norm.  Clearly, any $L_n$ is 
a quasiperiodic function.
Let us extend this by setting $L^{}_{0} \equiv 0$.
For arbitrary $m,n\in\NN$, we now have the subadditivity relation
\begin{equation}\label{eq:L-subadd}
    L_{m+n} (k) \, \leqslant \, L_{m} (k) +
    L_{n} (\lambda^{m} k) \ts ,
\end{equation}
which holds as a consequence of
Eq.~\eqref{eq:B-rec} and our definition of $L^{}_{0}$. Note that the
function $L_n$, for $n\in\NN$ and  all $k\in\RR$, is bounded as
\begin{equation}\label{eq:bounded}
  -\infty \, < \, \log(\delta_{n}) \, \leqslant \,
  L_{n} (k) \, \leqslant \, n \max_{0\leqslant x,y \leqslant 1}
  \log \| \tilde{B} (x,y) \| \, < \, \infty 
\end{equation}
due to Lemma~\ref{lem:away} in conjunction with the above
subadditivity and Fact~\ref{fact:quasi}.

To continue, we need the \emph{mean} of a function $f$, which is
defined as
\[
   \MM (f) \, := \, \lim_{T\to\infty} \myfrac{1}{T} \!
    \int_{0}^{T} \!\! f(t) \dd t
\]
and exists for all weakly almost periodic functions \cite{Cord},
which certainly include continuous, quasiperiodic functions. In
particular, we have
\[
   \MM (L_n ) \, = \, \lim_{T\to\infty} \myfrac{1}{T} \!
    \int_{0}^{T} \!\! L_{n} (t) \dd t \, = \int_{[0,1]^{2}}
    \log \| \tilde{B}^{(n)} (x,y) \| \dd x \dd y \ts .
\]
The crucial connection is now the following.

\begin{lemma}\label{lem:subadd}
  For any fixed\/ $N\in\NN$ and a.e.\ $k\in\RR$, one has
\[
   \limsup_{n\to\infty} \myfrac{1}{n} \ts L^{}_{n} (k)
   \, \leqslant \, \myfrac{1}{N} \ts \MM (L^{}_{N}) \ts .
\]
\end{lemma}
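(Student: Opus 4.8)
The plan is to combine a Fekete-type subadditivity argument with a one-dimensional equidistribution input. First I would write $n = qN + r$ with $0 \leqslant r < N$ and iterate the subadditivity relation~\eqref{eq:L-subadd} (which rests on Eq.~\eqref{eq:B-rec} in the form $B^{(m+n)}(k) = B^{(m)}(k)\, B^{(n)}(\lambda^{m}k)$) to obtain
\[
   L_{n}(k) \, \leqslant \, \sum_{j=0}^{q-1} L_{N}(\lambda^{jN}k)
   \, + \, L_{r}(\lambda^{qN}k) \ts .
\]
By Eq.~\eqref{eq:bounded}, the remainder term satisfies $\lvert L_{r}(\lambda^{qN}k)\rvert \leqslant (N-1)\, C$ with $C := \max\{\lvert \log\|\tilde{B}(x,y)\|\rvert : 0\leqslant x,y\leqslant 1\}$, uniformly in $k$ and independently of $n$, so $\tfrac{1}{n}L_{r}(\lambda^{qN}k)\to 0$. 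Since $\tfrac{q}{n}\to\tfrac{1}{N}>0$, this reduces the lemma to showing that, for a.e.\ $k\in\RR$,
\[
   \limsup_{q\to\infty} \myfrac{1}{q} \sum_{j=0}^{q-1}
   L_{N}(\lambda^{jN}k) \, \leqslant \, \MM(L_{N}) \ts .
\]

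For this Birkhoff-type step I would exploit that $L_{N}=\log\|B^{(N)}\|$ is a continuous \emph{quasiperiodic} function with fundamental frequencies $1$ and $\lambda$: by Facts~\ref{fact:P-rec} and \ref{fact:quasi}, $\|B^{(N)}(k)\|_{\mathrm{F}}^{2}$ is a non-negative trigonometric polynomial in these frequencies, bounded below by $\delta_{N}>0$ via Lemma~\ref{lem:away}, whence (Fact~\ref{fact:log-fp}, or directly Stone--Weierstrass on the hull $\TT^{2}$) $L_{N}$ lies in the uniform closure of trigonometric polynomials $g(k)=\sum_{(a,b)\in F}c^{}_{a,b}\,\ee^{2\pi\ii(a+b\lambda)k}$ with finite $F\subset\ZZ^{2}$, and for such a $g$ with $\|L_{N}-g\|^{}_{\infty}<\varepsilon$ one has $c^{}_{0,0}=\MM(g)$ and $\lvert \MM(g)-\MM(L_{N})\rvert\leqslant\varepsilon$. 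For each fixed $(a,b)\in F\setminus\{(0,0)\}$ the number $a+b\lambda$ is nonzero since $\lambda$ is irrational, and because $\lvert\lambda^{N}\rvert>1$ the cited Weyl-type result (\cite[Thm.~1.7]{Bugeaud}, see also \cite[Sec.~7.3, Thm.~1]{CFS} and \cite{KN}) shows that $\bigl(\theta(\lambda^{N})^{j}\bigr)_{j\geqslant0}$ is uniformly distributed mod $1$ for a.e.\ $\theta\in\RR$, hence $\tfrac{1}{q}\sum_{j=0}^{q-1}\ee^{2\pi\ii\theta\lambda^{jN}}\to0$; taking $\theta=(a+b\lambda)k$ (a linear change of variable) this fails only on a Lebesgue-null set $\mathcal{N}_{a,b}$. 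Off the null set $\bigcup_{(a,b)\neq(0,0)}\mathcal{N}_{a,b}$ we then get $\tfrac{1}{q}\sum_{j=0}^{q-1}g(\lambda^{jN}k)\to\MM(g)$, so $\limsup_{q}\tfrac{1}{q}\sum_{j}L_{N}(\lambda^{jN}k)\leqslant\MM(L_{N})+2\varepsilon$; letting $\varepsilon=1/\ell$, $\ell\in\NN$, and discarding the countable union of the resulting null sets gives the claim for a.e.\ $k$.

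The main obstacle is precisely this last step: one is tempted to deduce it from Birkhoff's ergodic theorem for the toral endomorphism $k\mapsto\lambda k$ on $\TT^{2}=\{(\lambda k,k)\bmod 1\}$ (which is ergodic, its matrix $M$ having no root-of-unity eigenvalue), but the line $\{(\lambda t,t):t\in\RR\}$ is Lebesgue-null in $\TT^{2}$, so a Haar-null set in $\TT^{2}$ can pull back to a non-null set of $k$ and the abstract ergodic theorem does not transfer. Passing to trigonometric monomials and using the \emph{one-dimensional} equidistribution of $(\theta\lambda^{jN})_{j}$ for a.e.\ $\theta$ is what circumvents this. A minor point to watch is that $L_{N}$ may be negative (its minimum is $\log\delta_{N}$), but since $\tfrac{q}{n}\to\tfrac1N>0$ the product rule $\limsup_{n}\tfrac{q}{n}B^{}_{n}=\tfrac1N\limsup_{n}B^{}_{n}$ applies irrespective of the sign of $\MM(L_{N})$, so the first and second parts combine without difficulty.
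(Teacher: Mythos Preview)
Your proof is correct and follows essentially the same route as the paper: write $n=qN+r$, bound the remainder uniformly via Eq.~\eqref{eq:bounded}, and reduce to the Birkhoff-type average $\tfrac{1}{q}\sum_{j=0}^{q-1}L_{N}(\lambda^{jN}k)\to\MM(L_{N})$ for a.e.\ $k$. The only difference is cosmetic: where the paper simply invokes the quasiperiodicity of $L_{N}$ together with the a.e.\ equidistribution of $(\lambda^{\ell N}k)^{}_{\ell}$ mod~$1$ and cites \cite[Lemma~2.2]{FSS} (or \cite[Thm.~6.4.4]{BHL}) for the conclusion, you unpack that step by approximating $L_{N}$ uniformly by trigonometric polynomials and handling each exponential monomial via the one-dimensional Weyl result---which is precisely how those cited lemmas are proved.
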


\begin{proof}
  Fix $N\in\NN$ and observe that any $n\in\NN$ has a unique
  representation as $n = m N + r$ with $m\in\NN_{0}$ and
  $0\leqslant r < N$. By Eq.~\ref{eq:L-subadd}, we get
\[
   L^{}_{n} (k) \, = \, L^{}_{mN+r} (k) \, \leqslant 
   L^{}_{r} (\lambda^{mN} k) \, + 
   \sum_{\ell=0}^{m-1} L^{}_{N} (\lambda^{\ell N} k) \ts .
\]
With $L^{}_{0}= 0$, the functions $L_{r}$ with 
$0\leqslant r < N$ are uniformly
bounded from above and below as a consequence of
Eq.~\eqref{eq:bounded}. So, we know that
$\lim_{n\to\infty} \frac{1}{n} L^{}_{r} (\lambda^{mN} k) =0$. 
Next, we have
\[
   \myfrac{1}{n} \sum_{\ell=0}^{m-1} L^{}_{N} (\lambda^{\ell N} k)
   \, = \, \myfrac{mN}{mN+r} \, \myfrac{1}{N} \biggl(
   \myfrac{1}{m} \sum_{\ell=0}^{m-1} L^{}_{N} (\lambda^{\ell N} k)
   \biggr) \, \xrightarrow{\, m \to\infty\,} \,
   \myfrac{1}{N} \, \MM (L^{}_{N} ) \ts ,
\]
where the last step holds for a.e.\ $k\in\RR$ as a consequence of the
uniform distribution property (modulo $1$) of the sequence
$(\lambda^{\ell N}k)^{}_{\ell\in\NN}$ for a.e.\ $k\in\RR$ in
conjunction with the quasiperiodicity of $L^{}_{N}$; see
\cite[Lemma~2.2]{FSS}, or \cite[Thm.~6.4.4]{BHL} for a detailed
derivation.

 Our claim is now a simple consequence.
\end{proof}

At this point, we can formulate our result as follows.

\begin{prop}\label{prop:L-out}
  For a.e.\ $k\in\RR$, the extremal Lyapunov exponents for the outward
  iteration, defined as in Eq.~\eqref{eq:limsup}, satisfy\/
  $\tilde{\chi}^{(1)}_{+} (k) + \tilde{\chi}^{(2)}_{+} (k) = 0$.

  Moreover, for a.e.\ $k\in\RR$, their absolute value is bounded by
\[
    \tilde{D} \, := \, \inf_{n\in\NN} \ts \myfrac{1}{n} \ts \MM \bigl(
    \log \| B^{(n)}(.) \| \bigr) \,
     = \, \inf_{n\in\NN} \myfrac{1}{2 n} \int_{[0,1]^2}
    \log \bigl( \| \tilde{B}^{(n)} (x,y) \|^{2}_{\mathrm{F}} 
     \bigr) \dd x \dd y \ts .
\]
\end{prop}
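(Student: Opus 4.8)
The plan is to reduce the whole statement to the scalar sequence $L^{}_{n}(k) := \log \| B^{(n)}(k) \|$, where I will fix $\|\cdot\|$ to be the Frobenius norm throughout; this is no loss, since the extremal exponents of Eq.~\eqref{eq:limsup} do not depend on the choice of norm, all norms on $\mathrm{Mat}(2,\CC)$ being equivalent and the logarithm absorbing the constants. The first assertion, $\tilde{\chi}^{(1)}_{+}(k) + \tilde{\chi}^{(2)}_{+}(k) = 0$ for a.e.\ $k$, is then already contained in Lemma~\ref{lem:extremal}, because the set of $k$ for which the second limit of Proposition~\ref{prop:det-cond} holds has full measure; nothing new is required there.

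For the quantitative bound I would first identify $\tilde{\chi}^{(2)}_{+}(k)$ with $\limsup_{n\to\infty} \frac1n L^{}_{n}(k)$ on a set of full measure. On $\RR \setminus \bigcup_{m\geqslant 0} \lambda^{-m} Z$ the matrix $B^{(n)}(k)$ is invertible for all $n$, and from $\bigl(B^{(n)}(k)\bigr)^{-1} = \bigl(\det B^{(n)}(k)\bigr)^{-1} \bigl(B^{(n)}(k)\bigr)^{\mathsf{ad}}$, the identity $\| A^{\mathsf{ad}} \|^{}_{\mathrm{F}} = \| A \|^{}_{\mathrm{F}}$ for $2\!\times\!2$ matrices, and $\det B^{(n)}(k) = (-1)^{n} \prod_{\ell=0}^{n-1} p(\lambda^{\ell} k)$, one obtains
\[
   \myfrac1n \log \bigl\| \bigl(B^{(n)}(k)\bigr)^{-1} \bigr\|^{}_{\mathrm{F}}
   \; = \; \myfrac1n \ts L^{}_{n}(k) \, - \, \myfrac1n \sum_{\ell=0}^{n-1}
   \log \lvert p(\lambda^{\ell} k) \rvert \ts .
\]
Proposition~\ref{prop:det-cond} says that the last term tends to $0$ for a.e.\ $k$ (it is, up to sign, precisely the Birkhoff average controlling its second limit), so passing to $\limsup$ gives $\tilde{\chi}^{(2)}_{+}(k) = \limsup_n \frac1n L^{}_{n}(k)$; together with the definition $\tilde{\chi}^{(1)}_{+}(k) = - \limsup_n \frac1n L^{}_{n}(k)$ this re-proves $\tilde{\chi}^{(1)}_{+} + \tilde{\chi}^{(2)}_{+} = 0$ and reduces the remaining task to showing $0 \leqslant \limsup_n \frac1n L^{}_{n}(k) \leqslant \tilde{D}$ for a.e.\ $k$.

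The upper bound is Lemma~\ref{lem:subadd}: for each fixed $N$ and a.e.\ $k$ it gives $\limsup_n \frac1n L^{}_{n}(k) \leqslant \frac1N \MM(L^{}_{N})$, and intersecting the countably many exceptional null sets and taking the infimum over $N$ yields $\tilde{D}$. For the lower bound I would invoke the elementary singular-value inequality $\| A \|^{2}_{\mathrm{F}} = \sigma_1^2 + \sigma_2^2 \geqslant 2 \sigma_1 \sigma_2 = 2 \lvert \det A \rvert$ with $A = B^{(n)}(k)$, which gives $L^{}_{n}(k) \geqslant \frac12 \log 2 + \frac12 \sum_{\ell=0}^{n-1} \log \lvert p(\lambda^{\ell} k) \rvert$; dividing by $n$ and using Proposition~\ref{prop:det-cond} once more, $\liminf_n \frac1n L^{}_{n}(k) \geqslant 0$ for a.e.\ $k$, hence $\limsup_n \frac1n L^{}_{n}(k) \geqslant 0$ (and in particular $\tilde{D} \geqslant 0$, so the bound is not vacuous). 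Combining the two, $\lvert \tilde{\chi}^{(1)}_{+}(k) \rvert = \lvert \tilde{\chi}^{(2)}_{+}(k) \rvert = \limsup_n \frac1n L^{}_{n}(k) \leqslant \tilde{D}$ for a.e.\ $k$. Finally, the alternative expression for $\tilde{D}$ is just the translation of the mean to the torus via Fact~\ref{fact:quasi}: $B^{(n)}$ lifts to the doubly $1$-periodic $\tilde{B}^{(n)}$ and the mean of a continuous quasiperiodic function is the integral of its lift over $[0,1]^2$, so $\MM(L^{}_{n}) = \int_{[0,1]^2} \log \| \tilde{B}^{(n)}(x,y) \|^{}_{\mathrm{F}} \dd x \dd y = \frac12 \int_{[0,1]^2} \log \| \tilde{B}^{(n)}(x,y) \|^{2}_{\mathrm{F}} \dd x \dd y$.

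The delicate parts are all outsourced to results already available: the singular (non-Riemann-integrable) behaviour of $\log \lvert p \rvert$ along the orbits $(\lambda^{\ell} k)^{}_{\ell}$, handled in Proposition~\ref{prop:det-cond} through discrepancy bounds, a Diophantine estimate on $\tri \lambda^{\ell} k \tri$, and Sobol's theorem, and the $\limsup$-subadditivity over an infinite measure space with a non-stationary matrix sequence, handled in Lemma~\ref{lem:subadd}. What is left here is bookkeeping of a.e.\ statements plus two one-line estimates --- the adjoint/determinant identity that identifies the outward top exponent with $\limsup_n \frac1n L^{}_{n}(k)$, and the arithmetic--geometric mean bound on $\| B^{(n)} \|^{2}_{\mathrm{F}}$ --- so I expect no real difficulty beyond keeping the exceptional null sets consistent.
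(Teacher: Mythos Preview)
Your proposal is correct and follows essentially the same route as the paper: the sum-zero relation comes from Lemma~\ref{lem:extremal}, the upper bound from Lemma~\ref{lem:subadd} (intersecting countably many null sets and taking the infimum over $N$), and the torus integral representation from the quasiperiodic lift of Fact~\ref{fact:quasi}.

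The one place where you do slightly more than the paper is the non-negativity of $\limsup_n \tfrac1n L_n(k)$. The paper leaves this implicit, relying on the general ordering $\tilde{\chi}^{(1)}_{+}(k) \leqslant \tilde{\chi}^{(2)}_{+}(k)$ (from $\|A\|\,\|A^{-1}\|\geqslant 1$) together with the sum-zero relation to conclude $\tilde{\chi}^{(1)}_{+}\leqslant 0 \leqslant \tilde{\chi}^{(2)}_{+}$, so that $|\tilde{\chi}^{(i)}_{+}| = -\tilde{\chi}^{(1)}_{+} = \limsup_n \tfrac1n L_n(k)$ directly. Your AM--GM estimate $\|B^{(n)}\|_{\mathrm{F}}^{2}\geqslant 2\,|\det B^{(n)}|$ combined with Proposition~\ref{prop:det-cond} is an equally valid alternative and has the small bonus of making $\tilde{D}\geqslant 0$ visible. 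Either way this is a one-line addendum, not a different strategy.
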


\begin{proof}
  The first claim follows from Lemma~\ref{lem:extremal}. The expression
  of the bound $\tilde{D}$ for $\lvert \tilde{\chi}^{(1)}_{+} (k) \rvert$
  and $\lvert \tilde{\chi}^{(2)}_{+} (k) \rvert$ as an infimum is an obvious 
  consequence of Lemma~\ref{lem:subadd}. This infimum can be
  calculated by representing the quasiperiodic function
  $\log \| B^{(n)} (.)\|$ as a section through a doubly $1$-periodic 
  function in two variable (as
  in Fact~\ref{fact:quasi}). Due to taking the infimum over $n\in\NN$, 
  it does not matter which matrix norm is used in the representation 
  as an integral over the $2$-torus.
\end{proof}

\begin{remark}
  Numerical experiments and various other approaches suggest that more
  is true than what we stated in Proposition~\ref{prop:L-out}.  In
  particular, we think that the extremal Lyapunov exponents actually
  exist as limits for a.e.\ $k\in\RR$.  At each such\/ $k$, one would
  then have a matching vector space filtration
\[
     \{ 0 \}  =  E^{(0)}_{+} (k) \, \subsetneq \,
      E^{(1)}_{+} (k) \, \subsetneq \,
      E^{(2)}_{+} (k)  = \CC^{2}
\]  
with the equivariance condition\/
$E^{(1)}_{+} \bigl( \frac{k}{\lambda}\bigr) = B\bigl(
\frac{k}{\lambda}\bigr) \, E^{(1)}_{+} (k)$,
the latter as a consequence of the recursion in conjunction with
Lemma~\ref{lem:constant}. It is not clear, however, what the precise
relation with the existing filtration of the inward iteration would
be.  \exend
\end{remark}

As mentioned earlier, the exponents for Eq.~\eqref{eq:out-iter}, with
the extra factor of $\sqrt{\lambda}$ in front of each matrix, are
given by $\chi^{(i)}_{+} (k) = \tilde{\chi}^{(i)}_{+} (k) + \log
\sqrt{\lambda}$. Both versions exist as limits under the same 
conditions. For the bounds that matter to us, one has the
following consequence, where we only use the definition of
the exponents according to Eq.~\eqref{eq:limsup}.

\begin{coro}\label{coro:positive}
  Under the assumptions of Proposition~\emph{\ref{prop:L-out}}, and
  for a.e.\ $k\in\RR$, the minimal Lyapunov exponent\/
  $\chi^{(1)}_{+} (k)$ for the outward iteration of
  Eq.~\eqref{eq:out-iter} is bounded below by the constant\/
  $D = \frac{1}{2} \log (\lambda) - \tilde{D}$, where\/
  $\tilde{D}$ is the constant from
  Proposition~\emph{\ref{prop:L-out}}.  Moreover, one has\/ $D >0$,
  which means that both Lyapunov exponents are strictly positive
  almost everywhere.
\end{coro}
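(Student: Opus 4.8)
The plan is to extract the bound on $\chi^{(1)}_{+}$ directly from Proposition~\ref{prop:L-out}, and then to verify the quantitative inequality $\tilde{D} < \tfrac{1}{2}\log(\lambda)$ separately. For the first part, recall that the Lyapunov exponents of the outward iteration~\eqref{eq:out-iter}, which carries the extra factor $\sqrt{\lambda}$, are related to the reduced ones (defined without that factor, as in Eq.~\eqref{eq:limsup}) by $\chi^{(i)}_{+}(k) = \tilde{\chi}^{(i)}_{+}(k) + \log\sqrt{\lambda}$. By Proposition~\ref{prop:L-out}, for a.e.\ $k\in\RR$ one has $\tilde{\chi}^{(1)}_{+}(k) + \tilde{\chi}^{(2)}_{+}(k) = 0$ together with $\lvert \tilde{\chi}^{(1)}_{+}(k) \rvert \leqslant \tilde{D}$, hence $\tilde{\chi}^{(1)}_{+}(k) \geqslant -\tilde{D}$. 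Adding $\log\sqrt{\lambda} = \tfrac{1}{2}\log(\lambda)$ gives $\chi^{(1)}_{+}(k) \geqslant \tfrac{1}{2}\log(\lambda) - \tilde{D} = D$ for a.e.\ $k$, and since $\chi^{(1)}_{+}(k) \leqslant \chi^{(2)}_{+}(k)$, the same lower bound holds for $\chi^{(2)}_{+}(k)$. So the whole statement reduces to the strict inequality $D > 0$, i.e.\ to $\tilde{D} < \tfrac{1}{2}\log(\lambda)$.

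For this, I would work with the representation $\tilde{D} = \inf_{n\in\NN} \tfrac{1}{2n} \int_{[0,1]^{2}} \log\bigl( \| \tilde{B}^{(n)}(x,y) \|^{2}_{\mathrm{F}} \bigr)\dd x \dd y$ from Proposition~\ref{prop:L-out}, where $\tilde{B}^{(n)}$ is the doubly $1$-periodic lift of $B^{(n)}$ provided by Facts~\ref{fact:quasi} and \ref{fact:P-rec}. (That the infimum coincides with the corresponding limit rests on the subadditivity of $n \mapsto \MM\bigl(\log \| B^{(n)}(.) \|\bigr)$, which in turn follows because the shift $k \mapsto \lambda^{m}k$ becomes, in the lifted coordinates $(x,y) = (\lambda k, k)$, the torus endomorphism with integer matrix $\bigl(\begin{smallmatrix} a^{}_{m+1} & b^{}_{m+1} \\ a^{}_{m} & b^{}_{m} \end{smallmatrix}\bigr)$, of determinant $(-1)^{m}3^{m}$, which preserves normalised Haar measure and hence leaves all means invariant.) Since $\tilde{D}$ is an infimum, it suffices to exhibit one $N$ for which $\tfrac{1}{N} \int_{[0,1]^{2}} \log\bigl( \| \tilde{B}^{(N)}(x,y) \|^{2}_{\mathrm{F}} \bigr)\dd x \dd y < \log(\lambda)$; by Fact~\ref{fact:P-rec} the integrand is an explicit non-negative trigonometric polynomial, and this inequality can be certified by a rigorous numerical evaluation of the $2$-torus integral, which is carried out in Appendix~A. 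Note that $N = 1$ is not enough, since $\tfrac{1}{2}\int_{0}^{1} \log\bigl( 2 + (1 + 2\cos(2\pi y))^{2} \bigr)\dd y \approx 0.71$ already exceeds $\tfrac{1}{2}\log(\lambda) \approx 0.417$; one has to pass to a moderately large $N$.

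The first part of the argument is immediate once Proposition~\ref{prop:L-out} is available, so the main obstacle is the strict inequality $\tilde{D} < \tfrac{1}{2}\log(\lambda)$. A Parseval-type computation based on the recursion $\tilde{P}_{n+1} = \tilde{P}_{n} + \tilde{p}(\,\cdot\,)\,\tilde{P}_{n-1}$ of Fact~\ref{fact:P-rec}, using that the relevant frequency supports are disjoint, gives $\| \tilde{P}_{n} \|^{2}_{L^{2}(\TT^{2})} = \mathcal{O}(\lambda^{n})$ and hence $\int_{[0,1]^{2}} \| \tilde{B}^{(n)}(x,y) \|^{2}_{\mathrm{F}}\dd x \dd y = \mathcal{O}(\lambda^{n})$, so that Jensen's inequality already yields $\tilde{D} \leqslant \tfrac{1}{2}\log(\lambda)$; upgrading this to a strict inequality amounts to controlling the (positive) Jensen gap, that is, the genuine fluctuation of $\log \| B^{(n)}(.) \|$ about its mean, and this is where the quantitative estimates of Appendix~A come in. Once $D > 0$ is established, the bound $\chi^{(1)}_{+}(k) \geqslant D$ shows that both pointwise Lyapunov exponents of the outward iteration are strictly positive for a.e.\ $k$, as claimed.
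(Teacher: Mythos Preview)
Your approach is essentially the same as the paper's: deduce $\chi^{(1)}_{+}(k)\geqslant D$ from Proposition~\ref{prop:L-out} via the shift by $\log\sqrt{\lambda}$, and then establish $\tilde{D}<\tfrac{1}{2}\log(\lambda)$ by evaluating $\tfrac{1}{n}\,\MM\bigl(\log\|B^{(n)}(.)\|\bigr)$ for a single sufficiently large~$n$. Two small points of divergence are worth noting. First, your pointer to Appendix~A is off: that appendix treats the \emph{inward} iteration and contains no torus-integral computation. The paper performs the numerical check inside the proof itself, and specifically at $n=4$, where one obtains $\tfrac{1}{4}\,\MM\bigl(\log\|B^{(4)}(.)\|\bigr)\approx 0.385(1)<\log\sqrt{\lambda}\approx 0.417$; your observation that $n=1$ is insufficient is correct and consistent with this. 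Second, your Parseval/Jensen argument giving the non-strict bound $\tilde{D}\leqslant\tfrac{1}{2}\log(\lambda)$ is a pleasant heuristic but plays no role in the paper, which simply relies on the single numerical inequality at $n=4$. The parenthetical remark about the torus endomorphism preserving Haar measure (hence preserving means, so that subadditivity of $n\mapsto\MM(L_n)$ holds) is correct and slightly more explicit than what the paper writes, but not needed for the corollary once Proposition~\ref{prop:L-out} is granted.
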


\begin{proof}
  The first two claims are immediate consequences of
  Proposition~\ref{prop:L-out}. 

  For the positivity of $D$, observe that, for any $n\in\NN$, we have
\[
   \bigl|\tilde{\chi}^{(1)}_{+} (k)\bigr| \, \leqslant \, 
    \myfrac{1}{n}\, \MM \bigl(  \log  \big\| 
   B^{(n)} (.)  \big\| \bigr) ,
\]
where the estimate holds for a.e.\ $k\in\RR$.

Now, we calculate the mean on the right-hand side 
for the first few integers, by precise numerical
integration, where we may use the Frobenius norm
and the last identity from Proposition~\ref{prop:L-out}.
For $n=4$, this gives an upper bound for $\lvert 
\tilde{\chi}^{(1)}_{+} (k) \rvert$ of $0.385 (1)$, which is 
smaller than $\log \sqrt{\lambda} \approx 0.417$. Since
\[
  \chi^{(1)}_{+} (k) \, = \, \log \mbox{\small $\sqrt{\lambda}$}
   \, - \, \lvert \tilde{\chi}^{(1)}_{+} (k) \rvert
  \quad \text{and} \quad
  \chi^{(1)}_{+} (k) + \chi^{(2)}_{+} (k)
  \, =  \, \log (\lambda)
\]
for a.e.\ $k\in\RR$, one has $0 < \chi^{(1)}_{+} (k) \leqslant
\chi^{(2)}_{+} (k)$, and our claim follows.
\end{proof}

\subsection{Conclusions}

The key point now is that our finding of positive Lyapunov exponents
for the outward iteration is not compatible with translation
boundedness of the diffraction measures. Indeed, for any
$u\in\CC^{2}$, $\widehat{\gamma^{}_{u}}$ is a translation bounded,
positive measure.  This also implies that its absolutely continuous
part is translation bounded. Then,
$ \bigl(\widehat{\gamma^{}_{(1,0)}} + \widehat{\gamma^{}_{(0,1)}}
\bigr)_{\mathsf{ac}}$
is translation bounded as well, and this measure is represented by the
Radon--Nikodym density
$ g(k) = \lvert v^{}_{1} (k)\rvert^{2} + \lvert v^{}_{2} (k)\rvert^{2}
$.
If $D$ is the lower bound of $\chi^{(1)}_{+} (k)$ for a.e.\ $k$ from
Corollary~\ref{coro:positive}, we
know that, for any $\delta > 0$, there is a constant $C>0$ such that
\[
    g (\lambda^{n} k) \, \geqslant \, C \, \ee^{2 (D - \delta) n} \ts g (k)
\]
holds for a.e.\ $k$; compare \cite[p.~20]{BP1}.  Since $D>0$, we may
choose $\delta = D/2$. Now, the constant $C$ might still depend on
$k$, but we can employ the argument from Lemma~\ref{lem:eps-suffice}
here. Indeed, if we fix some $\varepsilon >0$, we may restrict $k$ to
the interval $[0, \varepsilon]$. It suffices to show that $g=0$ in the
Lebesgue sense on this interval. Now, we may assume without
loss of generality that $C(k)$ defines a measurable function on
$[0,\varepsilon]$, where it satisfies $C(k) >0$ on a subset of full
measure. We can now invoke Lemma~\ref{lem:out-growth-simple}
from Appendix B to conclude $g=0$ in the Lebesgue sense on 
$[0,\varepsilon]$, and then also on $\RR_{+}$ by 
Lemma~\ref{lem:eps-suffice}. This gives the following result.

\begin{theorem}\label{thm:no-ac}
  The absolutely continuous parts of\/ $\widehat{\vU}_{ij}$
  must vanish, which is to say that\/ $h (k) =0$ for
  a.e.\ $k\in\RR$. Consequently, $\bigl( \widehat{\gamma^{}_{u}}
  \bigr)_{\mathsf{ac}} = 0$ for all\/ $u \in \CC^{2}$.   \qed
\end{theorem}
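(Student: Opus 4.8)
The plan is to deduce the vanishing of $h$ by feeding the strict positivity of the outward Lyapunov exponents into the translation boundedness of the diffraction. First I would reduce the problem to a single small interval: by Lemma~\ref{lem:eps-suffice} it suffices to show $h(k)=0$ for a.e.\ $k\in[0,\varepsilon]$, and by Lemma~\ref{lem:rank-one} together with Fact~\ref{fact:rank-1} we may write $h_{ij}(k)=v^{}_{i}(k)\,\overline{v^{}_{j}(k)}$ for a vector $v=(v^{}_{0},v^{}_{1})^{t}$ of $L^{2}_{\mathrm{loc}}(\RR)$-functions. Thus it is enough to prove that $g(k):=\lvert v^{}_{0}(k)\rvert^{2}+\lvert v^{}_{1}(k)\rvert^{2}=0$ a.e.\ on $[0,\varepsilon]$. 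Note that, by Eq.~\eqref{eq:u-FT}, $g$ is (up to the positive constant $\dens(\vL)$) the Radon--Nikodym density of the measure $\bigl(\widehat{\gamma^{}_{(1,0)}}+\widehat{\gamma^{}_{(0,1)}}\bigr)_{\mathsf{ac}}$, which is translation bounded as a consequence of Proposition~\ref{prop:hull-2}.

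Next I would invoke the outward iteration \eqref{eq:out-iter} for $v$ and Corollary~\ref{coro:positive}: for a.e.\ $k\in\RR$ the minimal Lyapunov exponent $\chi^{(1)}_{+}(k)$ of this iteration is bounded below by the strictly positive constant $D=\tfrac{1}{2}\log(\lambda)-\tilde D$. Translating this into the standard pointwise cocycle estimate (compare \cite[p.~20]{BP1}), one obtains, for every $\delta>0$ and a.e.\ $k$, a constant $C=C(k)>0$ with $g(\lambda^{n}k)\geqslant C\,\ee^{2(D-\delta)n}\,g(k)$ for all $n\in\NN$; choosing $\delta=D/2$ keeps the exponent positive. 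So unless $g(k)=0$, the values of $g$ along the geometric sequence $(\lambda^{n}k)_{n}$ grow exponentially, which is plainly incompatible with $g$ being the density of a translation bounded measure --- provided this incompatibility can be made to operate on a set of positive measure rather than for a single $k$.

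The main obstacle is exactly this last passage, since the exponent bound holds only almost everywhere and the prefactor $C(k)$ is $k$-dependent. To handle it, I would work on the fixed interval $[0,\varepsilon]$, argue by a Lusin-type reduction (in the spirit of the proof of Lemma~\ref{lem:rank-one}) that $C$ may be taken to be a measurable function on $[0,\varepsilon]$ which is strictly positive on a subset of full measure, and then apply Lemma~\ref{lem:out-growth-simple} from Appendix~B, which is precisely the scalar prototype of this ``outward blow-up forces vanishing'' mechanism. That lemma yields $g=0$ in the Lebesgue sense on $[0,\varepsilon]$, hence $h=0$ a.e.\ there, and Lemma~\ref{lem:eps-suffice} then upgrades this to $h=0$ a.e.\ on all of $\RR$. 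Consequently $\bigl(\widehat{\vU}_{ij}\bigr)_{\mathsf{ac}}=0$ for all $i,j\in\{0,1\}$, and Eq.~\eqref{eq:u-FT} gives $\bigl(\widehat{\gamma^{}_{u}}\bigr)_{\mathsf{ac}}=0$ for every $u\in\CC^{2}$, which is the assertion.
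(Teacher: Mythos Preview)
Your proposal is correct and follows essentially the same route as the paper: reduce to a small interval via Lemma~\ref{lem:eps-suffice}, use the rank-one structure from Lemma~\ref{lem:rank-one} and Fact~\ref{fact:rank-1} to pass to the scalar density $g=\lvert v^{}_{0}\rvert^{2}+\lvert v^{}_{1}\rvert^{2}$, feed the strictly positive outward exponent from Corollary~\ref{coro:positive} into the pointwise growth bound $g(\lambda^{n}k)\geqslant C(k)\,\ee^{2(D-\delta)n}g(k)$ with $\delta=D/2$, and then invoke Lemma~\ref{lem:out-growth-simple} against translation boundedness. The only cosmetic difference is that the paper simply asserts the measurability of $C(k)$ as a without-loss-of-generality, whereas you frame it as a Lusin-type step; both amount to the same thing.
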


It is clear that $\widehat{\vU}$, as a vector measure, cannot be pure
point. This follows from the simple observation that the inverse
transform of $\bigl(\widehat{\vU} \ts\ts \bigr)_{\mathsf{pp}} = \cI
(0) \, \delta^{}_{0}$ does not have discrete support. So, the
following conclusion is obvious.

\begin{coro}\label{coro:singular}
  If\/ $\vU_{ij}$ is one of our pair correlation measures, its Fourier
  transform is of the form
\[
   \widehat{\vU}_{ij} \, = \, 
    \frac{\dens (\vL^{(\alpha)}) \ts \dens (\vL^{(\beta)})}
       {\bigl(\dens (\vL)\bigr)^{2}} \, \delta^{}_{0} +
    \bigl(\widehat{\vU}_{ij}\bigr)_{\mathsf{sc}} \ts ,
\]
   with non-trivial singular continuous part.   \qed
\end{coro}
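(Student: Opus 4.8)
The plan is to assemble the claim from the Lebesgue decomposition of $\widehat{\vU}$ together with the results already in hand. First, Theorem~\ref{thm:no-ac} gives $\bigl(\widehat{\vU}_{ij}\bigr)_{\mathsf{ac}} = 0$ for all $i,j\in\{0,1\}$, so that only a pure point and a singular continuous part survive. Second, by Eq.~\eqref{eq:I-pointpart} the pure point part of the vector measure equals $\cI(0)\,\delta^{}_{0}$, and Eq.~\eqref{eq:zero-peak} identifies its entries as $\cI_{ij}(0) = \nu^{}_{i}\,\nu^{}_{j} = \dens(\vL^{(i)})\,\dens(\vL^{(j)})/(\dens(\vL))^{2}$. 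Since $\widehat{\vU}_{ij}=\widehat{\vU_{ij}}$ by definition, this yields the stated decomposition
\[
   \widehat{\vU}_{ij} \, = \, \frac{\dens(\vL^{(i)})\,\dens(\vL^{(j)})}{(\dens(\vL))^{2}}\,\delta^{}_{0} + \bigl(\widehat{\vU}_{ij}\bigr)_{\mathsf{sc}} \ts .
\]

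It then remains to show that the singular continuous part is genuinely non-trivial, and here I would argue by contradiction. Suppose $\bigl(\widehat{\vU}_{ij}\bigr)_{\mathsf{sc}} = 0$ for some pair $(i,j)$; then $\widehat{\vU}_{ij}$ is a scalar multiple of $\delta^{}_{0}$. Because $\vU_{ij}$ is Fourier transformable as a measure (Fact~\ref{fact:FT}) and Fourier inversion holds for transformable, translation bounded measures, this would force $\vU_{ij}$ to be a scalar multiple of Lebesgue measure on $\RR$. But by Fact~\ref{fact:FT} each $\vU_{ij}$ is a pure point measure with support $S_{ij}$, and it is non-zero since $\vU_{ij}(\{z\}) = \nu^{}_{ij}(z) > 0$ for every $z\in S_{ij}$ by Fact~\ref{fact:support} (with explicit non-zero values listed in Table~\ref{nutab}). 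A non-zero pure point measure cannot coincide with a multiple of Lebesgue measure, giving the desired contradiction; hence $\bigl(\widehat{\vU}_{ij}\bigr)_{\mathsf{sc}}\ne 0$ for all $i,j$.

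The only slightly delicate ingredient is the appeal to Fourier inversion, but this is entirely standard for transformable measures. If one wishes to sidestep it, the argument can instead be run solely on the positive measure $\widehat{\vU}_{00}$: vanishing of $\bigl(\widehat{\vU}_{00}\bigr)_{\mathsf{sc}}$ would make $\widehat{\vU}_{00} = (\nu^{}_{0})^{2}\,\delta^{}_{0}$, hence $\vU_{00}$ a multiple of Lebesgue measure, contradicting Fact~\ref{fact:FT}; since by Lemma~\ref{lem:spectral-parts} the three spectral parts of $\widehat{\vU}$ share common supporting sets across all components, non-triviality of the singular continuous part of $\widehat{\vU}_{00}$ propagates to every $\widehat{\vU}_{ij}$, which completes the argument.
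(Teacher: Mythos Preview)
Your main argument is correct and is exactly the paper's: the pure point part is identified via Eqs.~\eqref{eq:zero-peak}--\eqref{eq:I-pointpart}, the absolutely continuous part vanishes by Theorem~\ref{thm:no-ac}, and non-triviality of the singular continuous part follows because the inverse transform of a multiple of $\delta^{}_{0}$ is a multiple of Lebesgue measure, whereas each $\vU_{ij}$ is a non-zero pure point measure. One small caveat on your optional alternative: sharing common supporting sets for the spectral parts across the components does \emph{not} by itself force non-triviality to propagate from $\widehat{\vU}_{00}$ to the off-diagonal $\widehat{\vU}_{ij}$ (the zero measure is supported on any set), but this is moot since your primary argument already treats each pair $(i,j)$ directly.
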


Let us now turn to the actual diffraction and see how we can
use this insight to calculate the diffraction measure, at least
numerically and with good precision.

\section{Application to diffraction and outlook}\label{sec:appl}

\subsection{Example with balanced weights}
Consider 
\begin{equation}\label{eq:bal-measure}
    \omega^{}_{0} \; = \; 
    \sum_{x\in\vL} u(x)\, \delta_{x}
\end{equation}
with weights $u(x)\in\{u^{}_{0},u^{}_{1}\}$ such that the frequency
average its $u^{}_{0}\ts \nu^{}_{00}(0)+u^{}_{1}\nu^{}_{11}(0)=0$, and
hence $I^{}_{0}=0$ in Theorem~\ref{thm:pp}. One possible choice, which
we will adopt here, is given by the real weights $u^{}_{0}=1-\lambda$
and $u^{}_{1}=1$. We can now express the autocorrelation
$\gamma^{}_{0}$ of $\omega^{}_{0}$ by Eq.~\eqref{eq:new-gamma}, which
is of the form $\gamma^{}_{0} = \sum_{z\in\varDelta} \eta^{}_{0} (z) \,
\delta^{}_{z}$ with $\varDelta$ as in Proposition~\ref{prop:hull} and
\[
    \eta^{}_{0}(z) \, = \, \dens (\vL^{w})\,
    \bigl(1-\lambda,1\bigr) 
    \begin{pmatrix} \nu^{}_{00}(z) &  \nu^{}_{01}(z) \\
     \nu^{}_{10}(z) &  \nu^{}_{11}(z) \end{pmatrix}
     \begin{pmatrix} 1-\lambda \\ 1 \end{pmatrix} .
\]
In particular, we have
\begin{equation}\label{eq:slope}
    \gamma^{}_{0} \bigl( \{ 0 \} \bigr)
    \, = \: \eta^{}_{0} (0) \, = \,
    \dens (\vL^{w}) \, (\lambda - 1) \, = \,
    \myfrac{6 \lambda - 3}{13} \, \approx \, 0.832 \ts .
\end{equation}

The Fourier transform, $\widehat{\ts\gamma^{}_{0}\ts}$, is of the form
given in Eq.~\eqref{eq:u-FT}. It is a positive and translation bounded
measure. Moreover, due to the balanced weights, it has no pure point
part, so $\widehat{\ts\gamma^{}_{0}\ts}(\{k\})=0$ for all $k\in\RR$.
In fact, as a consequence of Corollary~\ref{coro:singular},
$\widehat{\ts\gamma^{}_{0}\ts}$ is then purely singular
continuous. Figure~\ref{fig:dist} illustrates the corresponding
distribution function $F$, where
\begin{equation}\label{eq:dist}
    F(x) \, := \, \widehat{\ts\gamma^{}_{0}\ts} 
    \bigl( [0,x]\bigr),
\end{equation}
for $x\in [0,3]$. This was calculated numerically via the integration
of approximating trigonometric sums, the latter being the absolute
squares of exponential sums that were obtained by an exact iteration
based on the inflation.  Alternatively, they can also be calculated on
the basis of a matrix Riesz product, in the spirit of
\cite[Sec.~2]{BS}.

\begin{figure}
\begin{center}
  \includegraphics[width=0.75\textwidth]{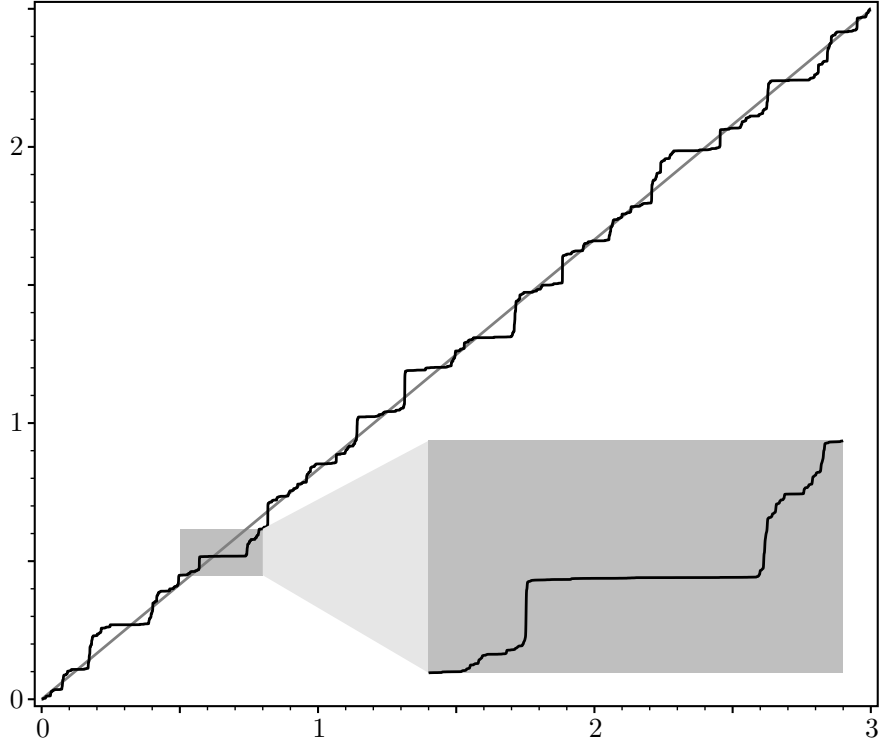}
\end{center}
\caption{\label{fig:dist} Sketch of the distribution function $F$ from
  Eq.~\eqref{eq:dist} for the singular continuous diffraction measure
  in the balanced weight case of Eq.~\eqref{eq:bal-measure}. The
  straight line is the average slope of $F$, which is $\eta^{}_{0}(0)$
  of Eq.~\eqref{eq:slope}. The enlarged detail is meant to illustrate
  that $F$ is strictly increasing, which implies that there are no
  proper plateaux.}
\end{figure}

\begin{remark}
  On average, the distribution function $F(x)$ grows linearly in $x$,
  with slope $\eta^{}_{0} (0)$, as also indicated in
  Figure~\ref{fig:dist}. Furthermore, $F$ is a continuous function
  that appears to be strictly increasing, in line with the general
  expectation that the supporting set of such measures must be 
  dense; compare \cite[Sec.~10.1]{TAO}.
\exend
\end{remark}

\subsection{Outlook}
Our main result concerns the absence of absolutely continuous
components in the diffraction measure, for arbitrary complex
weights. Now, one would like to also infer the absence of absolutely
continuous spectral measures for the dynamical system. This would
follow if one can also show that patch derived factors have no
absolutely continuous diffraction components, as an application of
\cite[Thm.~15 and Cor.~16]{BLvE}.

More generally, our analysis can be applied to other binary systems in
complete analogy, where the result will be that an irreducible IDA in
conjunction with a certain non-uniform hyperbolic structure in the
matrix iterations is incompatible with the presence of absolutely
continuous diffraction components.  Cases with reducible IDAs
can also be handled by explicit methods \cite{Neil}.
The situation for primitive
inflation rules over larger alphabets, or for inflation tilings in
higher dimensions, is more complex, and will be considered
separately.

\section{Appendix A: Details of the inward iteration}

Here, we study the iteration according to Eq.~\eqref{eq:newiter} in more
detail, where it suffices to consider $k \geqslant 0$.  First, choose
$c^{}_{\ts 0}>0$ such that $\lvert z(k) \rvert \leqslant 9 \ts c^{}_{\ts 0} 
k$ for all $k\in \bigl[ 0, \frac{1}{3} \bigr]$, which is possible because 
$z$ is analytic, with $z(k) = \cO (k)$ for $k \to 0$.  Now, set
$v^{\ts \prime}_{n} = ( \alpha^{}_{n}, \beta^{}_{n})^{t}$ and consider,
for some fixed $0 < k \leqslant \frac{1}{3}$, the recursion
\begin{equation}\label{eq:new-rec}
     v^{\ts \prime}_{n} \, = \, \myfrac{1}{\mbox{\small $\sqrt{\lambda}$}}\,
   \bigl( D - z^{}_{n} \ts N \bigr) \ts v^{\ts \prime}_{n-1}
\end{equation}
with $z^{}_{n} = z \bigl( \frac{k}{\lambda^{n}} \bigr)$ for $n\in\NN$,
with arbitrary start vector $v^{\ts \prime}_{0} \ne 0$. This means that
$(\alpha^{}_{n}, \beta^{}_{n})^{t}$ stands for $\bigl(\alpha
(k/\lambda^{n}), \beta (k/\lambda^{n}) \bigr)^{t}$ in the previous
iteration of Eq.~\eqref{eq:newiter}. Observe next that $\det(D -
z^{}_{n} \ts N) = \det\bigl(B(k/\lambda^{n}) \bigr) \ne 0$ for all
$n\in\NN$ under our condition on $k$, wherefore we know that
$v^{\ts \prime}_{n} \ne 0$ for all $n\geqslant 0$. Also, we have $\lvert
z^{}_{n} \rvert \leqslant 9 \ts c^{}_{\ts 0} k \ts \lambda^{-n}
\leqslant 3 \ts c^{}_{\ts 0} \lambda^{-n}$ for all $n\in\NN$. Now,
with $\| D \|^{}_{\infty} = \lambda$ and $\| N \|^{}_{\infty} =
\lambda/3$, we can estimate
\[
   \| v^{\ts \prime}_{n} \|^{}_{\infty} \, \leqslant \, \sqrt{\lambda}
   \, \bigl( 1 + c^{}_{\ts 0}  \lambda^{-n} \bigr) 
   \| v^{\ts \prime}_{n-1} \|^{}_{\infty}
   \leqslant \dots \leqslant \lambda^{n/2}
   \bigl( 1 +  c^{}_{\ts 0}  \lambda^{-n} \bigr) 
   \cdot \ldots \cdot \bigl( 1 +  c^{}_{\ts 0} 
   \lambda^{-1} \bigr) \| v^{\ts \prime}_{0} \|^{}_{\infty} \ts ,
\]
which implies $\| v^{\ts \prime}_{n} \|^{}_{\infty} = \cO(\lambda^{n/2})$
because the product $\prod_{m\geqslant 1} \bigl(1 + c^{}_{\ts 0}
\lambda^{-m} \bigr)$ is absolutely convergent. Since $\lvert
\alpha^{}_{n} \rvert \leqslant \| v^{\ts \prime}_{n} \|^{}_{\infty}$, this
also means
\[
    \alpha^{}_{n} \, = \, \cO (\lambda^{n/2}) \quad 
    \text{as $n\to \infty$} \ts ,
\]
as well as $\| v^{\ts \prime}_{n} \|^{}_{\infty} \leqslant c^{\ts\ts \prime}
\lambda^{n/2}$ for some $c^{\ts\ts \prime}> 0$ and all $n\in\NN_{0}$.

Now, choose $\varepsilon>0$ small enough so that $\lvert z^{}_{n}
\rvert \leqslant 3 \ts c\ts \lambda^{-n}$ holds for all $k\in
[0,\varepsilon]$ with
\begin{equation}\label{eq:choose-C}
   c \, = \, c(\varepsilon) \, < \,
   \min \big\{\lambda - \sqrt{\lambda},
   1+\sqrt{\lambda}-\lambda \big\} \, = \,
   1+\sqrt{\lambda}-\lambda \, \approx \,
   0.2147 \ts ,
\end{equation}
which is clearly possible because $c=3\ts c^{}_{\ts 0}\varepsilon$. For
any $0 < k \leqslant \varepsilon$ and $n\in\NN_{0}$, we then have
\begin{equation}\label{eq:iter-bound}
    \Big\| \frac{z^{}_{n+1}}{\mbox{\small $\sqrt{\lambda}$}}
    \, N \ts v^{\ts \prime}_{n} \ts \Big\|_{\infty} \ts \leqslant \,
    \myfrac{c\ts \lambda^{-n}}{\mbox{\small $\sqrt{\lambda}$}} \,
    \bigl\| v^{\ts \prime}_{n} \bigr\|_{\infty} \, \leqslant
    \,  \myfrac{c}{\mbox{\small $\sqrt{\lambda}$}} \,
    \bigl\| v^{\ts \prime}_{n} \bigr\|_{\infty} \, < \,
    \myfrac{\lambda - \mbox{\small $\sqrt{\lambda}$}}
     {\mbox{\small $\sqrt{\lambda}$}} \,
    \bigl\| v^{\ts \prime}_{n} \bigr\|_{\infty} \ts .
\end{equation}
The first estimate, together with Eq.~\eqref{eq:new-rec}, implies
\[
   \lvert \beta^{}_{n+1} \rvert \, \leqslant \,
   \myfrac{\lambda - 1}{\mbox{\small $\sqrt{\lambda}$}} \,
   \lvert \beta^{}_{n} \rvert
   + \myfrac{c \ts\lambda^{-n}}{\mbox{\small $\sqrt{\lambda}$}} \,
   \| v^{\ts \prime}_{n} \|^{}_{\infty} \, \leqslant \,
   \myfrac{\lambda - 1}{\mbox{\small $\sqrt{\lambda}$}} \, 
   \lvert \beta^{}_{n} \rvert
   + \myfrac{c^{\ts \ts \prime\prime} \lambda^{-n/2}}
          {\mbox{\small $\sqrt{\lambda}$}}
\]
with $c^{\ts\ts \prime\prime} = c\ts c^{\ts\ts \prime}$. Iterating this estimate
inductively gives
\[
\begin{split}
   \lvert \beta^{}_{n+1} \rvert \, & \leqslant \,
   \frac{(\lambda - 1)^{n+1}}
     {\lambda^{(n+1)/2}} \, \lvert \beta^{}_{0} \rvert +
   \myfrac{c^{\ts \ts \prime\prime}}{\lambda^{(n+1)/2}} 
   \sum_{m=0}^{n} (\lambda - 1)^{m} \\[1mm]
    & \leqslant \, \lvert \beta^{}_{0} \rvert \,
   \frac{(\lambda - 1)^{n+1}}{\lambda^{(n+1)/2}}
   + \myfrac{c^{\ts \ts \prime\prime}}{\lambda - 2} \,
   \frac{(\lambda-1)^{n+1} - 1}{\lambda^{(n+1)/2}} \\[2mm]
   & < \, \left(\lvert \beta^{}_{0} \rvert
   + \myfrac{c^{\ts \ts \prime\prime}}{\lambda - 2} \right)
   \left( \frac{\lambda - 1}{\mbox{\small $\sqrt{\lambda}$}} 
   \right)^{\! n+1} \; = \; \cO \left( \Bigl( 
   \myfrac{3}{\lambda^{3/2}}  \Bigr)^{\! n+1} \ts \right),
\end{split}
\] 
where $(\lambda - 1)/\mbox{\small $\sqrt{\lambda}$} = 3/\lambda^{3/2}
\approx 0.8585 < 1$.
Consequently, we have $\lim_{n\to\infty} \beta^{}_{n} = 0$, for any
start vector $v^{}_{0}$ and any $0<k\leqslant \varepsilon$.

If $\lvert \alpha^{}_{n} \rvert > \lvert \beta^{}_{n} \rvert$
for some $n$, which is a situation that can already occur for
$v^{\ts \prime}_{0}$, we have $\| v^{\ts \prime}_{n} \|^{}_{\infty} = \lvert
\alpha^{}_{n} \rvert > 0$, and the iteration \eqref{eq:new-rec} with the
estimates from Eqs.~\eqref{eq:choose-C} and \eqref{eq:iter-bound}
result in
\[
   \lvert \alpha^{}_{n+1} \rvert \, \geqslant 
   \lvert \alpha^{}_{n} \rvert \, 
   \myfrac{\lambda - c}{\mbox{\small $\sqrt{\lambda}$}} \, > \,
   \lvert \alpha^{}_{n} \rvert
   \quad \text{and} \quad
   \lvert \beta^{}_{n+1} \rvert \, \leqslant 
   \myfrac{\lambda - 1}{\mbox{\small $\sqrt{\lambda}$}} \,
   \lvert \beta^{}_{n} \rvert  +  
    \myfrac{c}{\mbox{\small $\sqrt{\lambda}$}} \,
    \lvert \alpha^{}_{n} \rvert \, < \, 
   \lvert \alpha^{}_{n} \rvert \ts ,
\]
so that
$\lvert \alpha^{}_{n+1} \rvert > \lvert \beta^{}_{n+1} \rvert$. By
induction,
$\lvert \alpha^{}_{m+1} \rvert > \lvert \alpha^{}_{m} \rvert > 0$ for
all $m\geqslant n$, while $\beta^{}_{m}$ goes to $0$ as before.  Using
the first inequality in Eq.~\eqref{eq:iter-bound}, we get
$\lvert \alpha^{}_{n+1} \rvert \geqslant \mbox{\small $\sqrt{\lambda}$} \, \lvert
\alpha^{}_{n} \rvert \ts \bigl( 1 - c\ts \lambda^{-n-1} \bigr)> 0$
and hence, for any $m\in\NN$,
\[
    \lvert \alpha^{}_{n+m} \rvert \, \geqslant \, \lambda^{m/2} \,
    \lvert \alpha^{}_{n} \rvert \, \prod_{\ell=1}^{m}
    \bigl(1 - c \ts\lambda^{-n-\ell} \bigr).
\]
Since the product converges absolutely as $m\to\infty$, we may
conclude that $\lvert \alpha^{}_{m} \rvert \geqslant c^{}_{\alpha}
\ts\lambda^{m/2}$ for all $m\geqslant n$, with some constant
$c^{}_{\alpha}>0$ that depends on $k$ and $v^{\ts \prime}_{0}$. With our
previous estimate, we thus have $\| v^{\ts \prime}_{n}\|^{}_{\infty} =
\lvert \alpha^{}_{n} \rvert \asymp \lambda^{n/2}$, which means that
$\lvert \alpha^{}_{n} \rvert$ is bounded from above and from below by
different (positive) multiples of the same exponential function. For
any initial vector $v^{\ts \prime}_{0}$ with this behaviour, we thus
obtain the Lyapunov exponent
\begin{equation}\label{eq:in-upper}
    \chi^{}_{-} ( v^{\ts \prime}_{0} ) \, := \,
    \limsup_{n\to\infty} \myfrac{1}{n} \ts \log 
    \| v^{\ts \prime}_{n} \|^{}_{\infty} \, = \,
    \lim_{n\to\infty} \myfrac{1}{n} \ts \log 
    \| v^{\ts \prime}_{n} \|^{}_{\infty} \, = \,
    \log \mbox{\small $\sqrt{\lambda}$} \, \approx \, 0.834 \ts ,
\end{equation}
where the existence of the limit is a simple consequence of the
asymptotic behaviour.

It remains to consider the case that $\lvert \alpha^{}_{n} \rvert
\leqslant \lvert \beta^{}_{n} \rvert$ for all $n\in\NN_{0}$, which
implies $\| v^{\ts \prime}_{n} \|^{}_{\infty} = \lvert \beta^{}_{n}
\lvert$. If $\alpha^{}_{n} = 0$ for some $n$, we get $\alpha^{}_{n+1}
\ne 0$ from the structure of $N$, and hence also $\beta^{}_{n+1} \ne
0$. Note that this is just the irreducibility of $\cB$ in action;
compare Lemma~\ref{lem:ida}. Choose an $n\in\NN_{0}$ such that
$\beta^{}_{n} \ne 0$ and observe that we then get
\[
  0 \, < \, \lvert \beta^{}_{n} \rvert \,
   \myfrac{\lambda - 1}{\mbox{\small $\sqrt{\lambda}$}} \, \Bigl(1 -  
   \myfrac{c}{\lambda - 1} \, \lambda^{-n} \Bigr)
   \, \leqslant \,  \lvert \beta^{}_{n+1} \rvert \, \leqslant \,
   \lvert \beta^{}_{n} \rvert \,
   \myfrac{\lambda - 1}{\mbox{\small $\sqrt{\lambda}$}} \, \Bigl(1 +
   \myfrac{c}{\lambda - 1} \, \lambda^{-n} \Bigr)
\]
because $0<c<\lambda - 1$
with the constant $c$ from Eq.~\eqref{eq:choose-C}. 
The upper estimate implies that $\lvert \beta^{}_{n} \rvert$
is ultimately monotonically decreasing.
Iterating the lower estimate leads to
\[
   \lvert \beta^{}_{n+m} \rvert \, \geqslant \,
  \left( \myfrac{\lambda - 1}{\mbox{\small $\sqrt{\lambda}$}} 
   \right)^{\! m}  \lvert \beta^{}_{n} \rvert \,
   \prod_{\ell = 0}^{m-1} \Bigl( 1 - \myfrac{c}{\lambda - 1} \,
   \lambda^{-n-\ell}\Bigr) 
\]
for all $m\geqslant 1$. Since the product converges absolutely as
$m\to\infty$, we may conclude that $\| v^{\ts \prime}_{n} \|^{}_{\infty} =
\lvert \beta^{}_{n} \rvert \asymp \bigl( 3 / \lambda^{3/2} \bigr)^{n}$
as $n\to\infty$, hence also $\lim_{n\to\infty} v^{\ts \prime}_{n} = 0$.

Let us look at a consequence of this asymptotic behaviour on the
convergence rate of $\alpha^{}_{n}$. Select an $n$ with $\alpha^{}_{n}
\ne 0$, which we know to exist. With $\| v^{\ts \prime}_{n}
\|^{}_{\infty}=\lvert \beta^{}_{n}\rvert$, we then find
\[
   \lvert \alpha^{}_{n+1} \rvert \, \geqslant \, 
   \mbox{\small $\sqrt{\lambda}$} \, \lvert \alpha^{}_{n} \rvert
   - \myfrac{c}{\lambda^{n+1/2}} \, \lvert \beta^{}_{n} 
   \rvert \, \geqslant \, \lvert \alpha^{}_{n} \rvert +
   \bigl(\mbox{\small $\sqrt{\lambda}$} - 1\bigr) \ts
   \lvert \alpha^{}_{n} \rvert  \, - \, \widetilde{c}\,
   \Bigl(\myfrac{3}{\lambda^{5/2}}\Bigr)^{\! n}
\]
for some constant $\widetilde{c}$ that depends on $k$ and
$\beta^{}_{0}$.  When $\bigl(\mbox{\small $\sqrt{\lambda}$} - 1\bigr)
\ts \lvert \alpha^{}_{n} \rvert \geqslant \widetilde{c} \, \bigl(
3/\lambda^{5/2}\bigr)^{\nts n}$, one gets
$\lvert \alpha^{}_{n+1} \rvert \geqslant \lvert \alpha^{}_{n} \rvert >
0$ and then, inductively,
$\lvert \alpha^{}_{n+m}\rvert \geqslant \lvert \alpha^{}_{n} \rvert >
0$ for all $m\in\NN$, which contradicts
$\lim_{m\to\infty} \alpha^{}_{m} = 0$.  Consequently, whenever
$\alpha^{}_{n} \ne 0$ in our present case, we must have
\[
     \lvert \alpha^{}_{n} \rvert \, < \, 
     \myfrac{\widetilde{c}}{\mbox{\small $\sqrt{\lambda}$} - 1} \,
     \Bigl( \myfrac{3}{\lambda^{5/2}} \Bigr)^{\! n}
\]   
which altogether means $\alpha^{}_{n} = \cO \bigl( 3/ \lambda^{5 n/2}
\bigr)$, as well as $\lvert \alpha^{}_{n} / \beta^{}_{n} \rvert = \cO
(\lambda^{-n})$.

The fact that also this somewhat counterintuitive behaviour does
indeed occur can be seen as follows. Under our restriction on $k$, we
know that the mapping $v^{\ts \prime}_{0} \mapsto v^{\ts \prime}_{n} =
Q^{}_{n} (k) \ts v^{\ts \prime}_{0}$ is invertible. Consequently, it is
possible to choose $v^{\ts \prime}_{0} \in \partial B^{}_{1} (0)$ in such
a way that $v^{\ts \prime}_{n} = c^{}_{n} \ts (0,1)^{t}$ for some
$c^{}_{n} > 0$, which is the contracting direction for $M$ in this
representation. Let $v^{\ts \prime}_{0,n}$ be this initial condition, and
consider the sequence $\bigl( v^{\ts \prime}_{0,n} \bigr)_{n\in\NN}$
constructed this way. As it lies within the compact set $\partial
B^{}_{1} (0)$, there is a converging subsequence, with limit
$v^{\ts \prime}_{0,\infty}$, say. By construction, this vector spans a
one-dimensional subspace $E^{\ts \prime}_{-} (k)$ of vectors that behave
as just derived under the inward iteration. For any $v^{\ts \prime}_{0}
\in E^{\ts \prime}_{-} (k)$, we thus get
\begin{equation}\label{eq:in-lower}
    \chi^{}_{-} ( v^{\ts \prime}_{0} ) \, = \,
    \lim_{n\to\infty} \myfrac{1}{n} \ts \log 
    \| v^{\ts \prime}_{n} \|^{}_{\infty} \, = \,
    \log \myfrac{\lambda - 1}{\mbox{\small $\sqrt{\lambda}$} }
    \, = \, \log \myfrac{3}{\lambda^{3/2}}     
    \, \approx \, -0.153 \ts ,
\end{equation}
where the existence of the Lyapunov exponent as a limit follows
as in Eq.~\eqref{eq:in-upper}.

\section{Appendix B: Two simple scaling arguments}

Let us briefly explain the idea to use an `impossible' singularity of
a locally integrable function to conclude that such a function must
vanish, spelled out in a one-dimensional setting for
illustration. Here, for $\lambda > 1$ fixed, $h \bigl(
\frac{x}{\lambda}\bigr) = \lambda \ts\ts h (x)$ would be the (scalar)
analogue of Eq.~\eqref{eq:h-iter}. This implies $h(x) = \mathcal{O}
(x^{-1})$ as $x\to 0$, a contradiction to local integrability unless
$h=0$. The precise statement is the following.

\begin{lemma}
  Let\/ $h\in L^{1} \bigl([0,1], \CC \bigr)$, and assume that,
  for some fixed\/ $\lambda > 1$, one has
\[
   h \bigl( \tfrac{x}{\lambda}\bigr) \, = \, \lambda \, h(x)
\]
  for a.e.\ $x\in [0,1]$. Then, $h=0$, which means\/
  $h(x) = 0$ a.e.\ in\/ $[0,1]$.
\end{lemma}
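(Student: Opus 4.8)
The plan is to turn the scaling relation into a statement about the $L^{1}$-mass of $h$ on shrinking intervals and then invoke absolute continuity of the Lebesgue integral. First I would iterate the hypothesis. Writing $E\subset[0,1]$ for the null set off which $h(x/\lambda)=\lambda\ts h(x)$ holds, a short induction shows that for each fixed $n\in\NN$ one has $h(x/\lambda^{n})=\lambda^{n}h(x)$ for all $x\in[0,1]$ outside $\bigcup_{j=0}^{n-1}\lambda^{j}E$; since this is a finite union of null sets, the iterated identity holds for a.e.\ $x\in[0,1]$. The only point to check here is the bookkeeping of exceptional sets, which is routine.

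Next I would take absolute values in the iterated identity, integrate over $[0,1]$, and perform the affine change of variables $y=x/\lambda^{n}$ (legitimate for $L^{1}$ functions). This yields
\[
   \int_{0}^{1} \lvert h(x) \rvert \dd x \, = \, \lambda^{-n}\!\int_{0}^{1} \bigl\lvert h(x/\lambda^{n}) \bigr\rvert \dd x \, = \, \int_{0}^{1/\lambda^{n}}\! \lvert h(y) \rvert \dd y \ts ,
\]
so the full $L^{1}$-norm of $h$ on $[0,1]$ equals the integral of $\lvert h\rvert$ over the interval $[0,\lambda^{-n}]$, for every $n\in\NN$.

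Finally, since $h\in L^{1}\bigl([0,1],\CC\bigr)$, absolute continuity of the Lebesgue integral --- equivalently, dominated convergence applied to $\lvert h\rvert\ts\one_{[0,\lambda^{-n}]}$ --- forces the right-hand side above to tend to $0$ as $n\to\infty$. Hence $\lVert h\rVert^{}_{L^{1}[0,1]}=0$, i.e.\ $h=0$ a.e.\ on $[0,1]$. I do not expect a real obstacle: the argument is elementary, and the only care needed is the null-set bookkeeping in the iteration step together with the observation that the change of variables is valid at the level of integrable functions, being an affine bijection of the line. An alternative, more in the spirit of the discussion preceding the lemma, would be to argue pointwise that the relation forces $h(x)=\cO(x^{-1})$ as $x\searrow0$ along any orbit $(x_{0}/\lambda^{n})^{}_{n}$, which is incompatible with local integrability near the origin unless $h\equiv0$; but the integrated version is cleaner and avoids handling individual orbits.
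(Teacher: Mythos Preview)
Your argument is correct and is essentially the same scaling-plus-change-of-variables idea as in the paper. The only cosmetic difference is that the paper decomposes $[0,1]$ into the annuli $[\lambda^{-m-1},\lambda^{-m}]$, shows each carries the same $L^{1}$-mass as $[\lambda^{-1},1]$, and concludes via the resulting divergent series, whereas you show directly that $\lVert h\rVert^{}_{1}$ equals the mass on $[0,\lambda^{-n}]$ and let $n\to\infty$; both routes are equivalent and equally short.
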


\begin{proof}
  By assumption, $h$ is certainly integrable on
  $\bigl[\frac{1}{\lambda},1 \bigr]$. Since
\[
   h \bigl( \tfrac{x}{\lambda^{m}} \bigr) \, = \,
   \lambda^{m} \ts h(x)
\]
also holds for all $m\in\NN$ and still for a.e.\ $x\in [0,1]$,
one has
\[
   \int_{1/\lambda^{m+1}}^{1/\lambda^{m}} \lvert h(x) \rvert \dd x
  \, = \, \myfrac{1}{\lambda^{m}} \int_{1/\lambda}^{1}
  \big\lvert h \bigl( \tfrac{y}{\lambda^{m}} \bigr) 
   \big\rvert \dd y \, = \int_{1/\lambda}^{1} \lvert h(y) 
   \rvert \dd y \ts ,
\]
via a simple transformation of variable ($y = \lambda^{m}\ts x$),
hence
\[
    \| h \|^{}_{1} \, =  \sum_{m\ge 0} 
    \int_{1/\lambda^{m+1}}^{1/\lambda^{m}}
     \lvert h(x) \rvert \dd x \, =
    \sum_{m\ge 0} \int_{1/\lambda}^{1} 
   \lvert h(x) \rvert \dd x \ts .
\]
Consequently, $\| h \|^{}_{1} < \infty$ implies $\lvert h(x) \rvert =
0$ a.e. on $\bigl[\frac{1}{\lambda},1 \bigr]$. The functional equation
then gives $\lvert h(x) \rvert = 0$ a.e. on $\bigl[\frac{1}
{\lambda^{m+1}}, \frac{1}{\lambda^{m}} \bigr]$ for any $m\ge 0$, hence
$h=0$ as claimed.
\end{proof}

Let us also discuss why a positive, pointwise defined 
Lyapunov exponent for the
asymptotic growth of a density function $g \geqslant 0$ is
incompatible with translation boundedness of the measure
$g \mu^{}_{\mathrm{L}}$ defined by $g$, where
$\mu^{}_{\mathrm{L}}$ denotes Lebesgue measure on $\RR$.

\begin{lemma}\label{lem:out-growth}
  Let\/ $g \in L^{1}_{\mathrm{loc}} (\RR_{+})$ be a non-negative
  function and let\/ $\lambda > 1$ be fixed. Assume further that\/ $g$
  satisfies the relation\/
  $g (\lambda x) \geqslant \vartheta (x) \ts g (x)$ for a.e.\ $x>0$,
  where\/ $\vartheta$ is a bounded, measurable function with\/
  $\vartheta (x) >1$ and\/ $\vartheta (\lambda x) = \vartheta (x)$ for
  a.e.\ $x>0$. Then, the absolutely continuous measure\/
  $g\ts \mu^{}_{\mathrm{L}}$ is translation bounded if and only if\/
  $g=0$ in the Lebesgue sense.
\end{lemma}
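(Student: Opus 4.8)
The plan is to observe that the ``if'' direction is trivial, since the zero measure is translation bounded, and to concentrate on the converse: assuming that $g\,\mu^{}_{\mathrm{L}}$ is translation bounded, we wish to conclude $g=0$ in the Lebesgue sense. The idea is to turn the one-step growth inequality into a geometric lower bound by iteration, and then to contradict the uniform bound on the mass of $g\,\mu^{}_{\mathrm{L}}$ over unit intervals.

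First I would iterate the hypothesis. Since $g(\lambda x)\geqslant\vartheta(x)\,g(x)$ holds for a.e.\ $x>0$, and $\vartheta(\lambda x)=\vartheta(x)$ a.e., an induction on $n$ (where the accumulated exceptional sets are countable unions of scaled null sets, hence still null) gives
\[
   g(\lambda^{n} x) \,\geqslant\, \Bigl(\,\prod_{j=0}^{n-1}\vartheta(\lambda^{j}x)\Bigr)\, g(x)
   \,=\, \vartheta(x)^{n}\, g(x)
\]
for every $n\in\NN$ and a.e.\ $x>0$.

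Next, suppose for contradiction that $g$ is not a.e.\ zero. Then $\{x>0 : g(x)>0\}$ has positive Lebesgue measure, so it meets some annulus $[\lambda^{m},\lambda^{m+1}]$, $m\in\ZZ$, in a set of positive measure; here it is essential to allow a general $m$ rather than only $m=0$, since a priori $g$ could vanish near the origin. Using that $\vartheta>1$ a.e., one may then pick $c>0$, $\delta>0$ and a subset $F\subseteq[\lambda^{m},\lambda^{m+1}]$ of positive measure on which simultaneously $g\geqslant c$ and $\vartheta\geqslant 1+\delta$. A change of variable $y=\lambda^{n}x$ gives, for every $n\in\NN$,
\[
   \int_{\lambda^{m+n}}^{\lambda^{m+n+1}}\! g(y)\dd y
   \,=\, \lambda^{n}\!\int_{\lambda^{m}}^{\lambda^{m+1}}\! g(\lambda^{n}x)\dd x
   \,\geqslant\, \lambda^{n}\!\int_{F}\vartheta(x)^{n}\, g(x)\dd x
   \,\geqslant\, c\,\lvert F\rvert\,\bigl(\lambda\,(1+\delta)\bigr)^{n} .
\]
On the other hand, translation boundedness of $g\,\mu^{}_{\mathrm{L}}$ provides a constant $C$ with $\int_{t}^{t+1}g\leqslant C$ for all $t\in\RR$, and covering the interval $[\lambda^{m+n},\lambda^{m+n+1}]$, of length $\lambda^{m}(\lambda-1)\lambda^{n}$, by at most $\lambda^{m}(\lambda-1)\lambda^{n}+1$ unit intervals yields the upper bound $C'\lambda^{n}+C$ with $C'=C\lambda^{m}(\lambda-1)$. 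Dividing both bounds by $\lambda^{n}$ forces $c\,\lvert F\rvert\,(1+\delta)^{n}\leqslant C'+C$ for all $n$, which is impossible since $\delta>0$. Hence $g=0$ a.e., completing the proof.

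I do not anticipate a genuine obstacle; the argument is elementary. The only points needing a little care are the null-set bookkeeping in the iteration, the use of a general annulus $[\lambda^{m},\lambda^{m+1}]$ to handle possible vanishing of $g$ near $0$, and the observation that the scaling invariance $\vartheta(\lambda x)=\vartheta(x)$ makes the bound $g(\lambda^{n}x)\geqslant\vartheta(x)^{n}g(x)$ equally available on every such annulus.
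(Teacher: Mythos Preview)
Your proof is correct and follows essentially the same strategy as the paper's: iterate the growth relation to obtain $g(\lambda^{n}x)\geqslant\vartheta(x)^{n}g(x)$, then show that a nonzero $g$ would force the mass over intervals of length $\sim\lambda^{n}$ to grow strictly faster than $\lambda^{n}$, contradicting translation boundedness.

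The one noteworthy difference is in how the extra growth factor is extracted. The paper normalises $g$ on $[0,a]$ to a probability density $f$ and applies Jensen's inequality to obtain $\EE_{f}(\vartheta^{m})\geqslant\bigl(\EE_{f}(\vartheta)\bigr)^{m}$, with $\EE_{f}(\vartheta)>1$. You instead restrict to a set $F$ of positive measure on which $g\geqslant c$ and $\vartheta\geqslant 1+\delta$ simultaneously, yielding the pointwise bound $(1+\delta)^{n}$. Your route is slightly more elementary (no convexity needed), while the paper's gives a sharper constant tied to the mean of $\vartheta$; for the purposes of the lemma, both are entirely adequate. Your care in locating the support of $g$ in a general annulus $[\lambda^{m},\lambda^{m+1}]$ is a nice touch, though the paper's choice of $[0,a]$ with $\int_{0}^{a}g>0$ handles the same issue.
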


\begin{proof}
  If $g$ does not vanish almost everywhere, we have $q:=
  \int_{0}^{a} g (x) \dd x >0$ for some $a>0$, and $f := q^{-1} g$
  is a probability density on $[0,a]$. By assumption, we have
\[
     g (\lambda^{\nts m} x) \, \geqslant \, 
     \vartheta (\lambda^{\nts m-1} x)
     \cdot \ldots \cdot \vartheta (x) \, g(x) \, = \,
     \vartheta (x)^{m} \ts g(x)
\]  
for any (fixed) $m\in\NN$ and a.e.\ $x>0$, by an iterated application
of the assumed estimate. Now, via the (clearly existing) moments of
$\vartheta$ relative to $f$, we get
\[
\begin{split}
    \int_{0}^{a \lambda^{\nts m}} \!\! g(x) \dd x \,
    & = \, \lambda^{\nts m}
    \int_{0}^{a} g (\lambda^{\nts m} y) \dd y \, \geqslant \,
    q \ts \lambda^{\nts m} \int_{0}^{a}  \vartheta (y)^{m}
    \, f(y) \dd y \\[2mm]
    & = \, q \ts \lambda^{\nts m} \, \EE_{f} \bigl( \vartheta^{m} \bigr)
    \, \geqslant \, q \ts \lambda^{\nts m} \bigl( \EE_{f} (\vartheta) \bigr)^{m} 
    \, = \, q \, (\varTheta_{1} \lambda)^{m} ,
\end{split}
\]
where the estimate in the second line is a consequence of Jensen's
inequality, while the first moment,
$\varTheta_{1} = \EE_{f} (\vartheta)$, clearly satisfies
$\varTheta_{1} >1$.

This shows that $ \int_{0}^{a\lambda^{\nts m}} g(x) \dd x \not\in 
\cO (\lambda^{\nts m} )$ as $m\to\infty$. However, 
$g\ts \mu^{}_{\mathrm{L}}$ translation bounded with $g\geqslant 0$
implies
\[
     \int_{0}^{L} g(x) \dd x \, = \, \cO (L) \quad \text{as } L\to\infty \ts ,
\]
which contradicts the previous estimate. Therefore, we must have
$g(x) = 0$ for a.e.\ $x>0$ as claimed.
\end{proof}

A simple variant, which applies to cases with a.e.\ constant Lyapunov
exponents as in our example, can be stated as follows.

\begin{lemma}\label{lem:out-growth-simple}
  Let\/ $g \in L^{1}_{\mathrm{loc}} (\RR_{+})$ be a non-negative
  function and let\/ $\lambda > 1$ be fixed. Assume further that
  there is an interval\/ $[0,a]$ with\/ $a>0$, a constant\/ $\vartheta>1$,
  and a measurable function\/ $C$ with\/ $C(x)>0$ for a.e.\
  $x\in [0,a]$ such that\/
  $g(\lambda^m x) \geqslant C(x) \, \vartheta^{m} \ts g(x)$
  holds for a.e.\ $x\in [0,a]$. 
  Then, the absolutely continuous measure\/
  $g\ts \mu^{}_{\mathrm{L}}$ is translation bounded if and only if\/
  $g=0$ on\/ $[0,a]$ in the Lebesgue sense.
\end{lemma}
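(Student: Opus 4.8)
The plan is to argue by contraposition, essentially following the proof of Lemma~\ref{lem:out-growth}, but in the simpler situation where the growth factor $\vartheta$ is a genuine constant, so that no appeal to Jensen's inequality is needed. The `if' direction is trivial, since the zero measure is translation bounded.

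For the converse, suppose that $g$ does not vanish in the Lebesgue sense on $[0,a]$. Then the set $\{x\in[0,a] : g(x) > 0\}$ has positive measure, and since $C(x) > 0$ for a.e.\ $x\in[0,a]$, a standard measure-theoretic argument (passing to level sets of the measurable function $\min\{g,C,1\}$ on $[0,a]$) produces a measurable subset $S\subseteq[0,a]$ of positive Lebesgue measure together with constants $c^{}_{1}, c^{}_{2} > 0$ such that $g(x)\geqslant c^{}_{1}$ and $C(x)\geqslant c^{}_{2}$ for all $x\in S$. Moreover, I would intersect the almost-everywhere exception set over the countably many values $m\in\NN$, so that the assumed inequality $g(\lambda^{m} x)\geqslant C(x)\,\vartheta^{m}\,g(x)$ holds for a.e.\ $x\in[0,a]$ simultaneously for all $m$; note that here, unlike in Lemma~\ref{lem:out-growth}, no iteration is required since the hypothesis is already stated for every power $m$. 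Shrinking $S$ by a null set, we may thus assume this inequality holds for every $x\in S$ and every $m\in\NN$.

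Next I would perform the change of variable $y = \lambda^{m} x$ and estimate, for each $m\in\NN$,
\[
   \int_{0}^{a\lambda^{m}} g(x) \dd x \, = \, \lambda^{m}
   \int_{0}^{a} g(\lambda^{m} y) \dd y \, \geqslant \, \lambda^{m}
   \int_{S} C(y)\, \vartheta^{m}\, g(y) \dd y \, \geqslant \,
   c^{}_{1}\, c^{}_{2}\, \lvert S \rvert \, (\lambda\,\vartheta)^{m} ,
\]
so that $\int_{0}^{a\lambda^{m}} g(x)\dd x$ grows at least like $(\lambda\,\vartheta)^{m}$. On the other hand, translation boundedness of $g\,\mu^{}_{\mathrm{L}}$ with $g\geqslant 0$ forces $\int_{0}^{L} g(x)\dd x = \mathcal{O}(L)$ as $L\to\infty$; taking $L = a\lambda^{m}$ would give $\int_{0}^{a\lambda^{m}} g \in \mathcal{O}(\lambda^{m})$, which is incompatible with the above lower bound because $\vartheta > 1$. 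This contradiction shows that $g = 0$ a.e.\ on $[0,a]$.

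The only step requiring any care is the extraction of the set $S$ on which both $g$ and $C$ admit uniform positive lower bounds; this is a routine application of the fact that a measurable, a.e.\ positive function on a set of positive measure is bounded below by a positive constant on a (possibly smaller) set of positive measure. Everything after that is an elementary computation, so I expect no genuine obstacle.
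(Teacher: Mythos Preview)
Your proof is correct and follows essentially the same route as the paper: assume $g$ is not a.e.\ zero on $[0,a]$, change variables to bound $\int_{0}^{a\lambda^{m}} g$ below by a constant times $(\lambda\vartheta)^{m}$, and contradict translation boundedness. The only cosmetic difference is that the paper skips the extraction of the set $S$ and works directly with $c_{g} := \int_{0}^{a} C(x)\,g(x)\dd x$, which is strictly positive because $C>0$ a.e.\ and $g>0$ on a set of positive measure; your level-set argument is a valid but slightly longer way to reach the same lower bound.
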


\begin{proof}
Assume to the contrary of our claim that $g(x) > 0$ for a subset
of $[0,a]$ of positive measure. Then, 
\[
     c^{}_{g} \, := \int_{0}^{a}  C(x) \, g(x) \dd x \, > \, 0 
\]
because the set $\{ x \in [0,a] : C(x) = 0 \text{ or } g(x) = 0 \}$
is not of full measure in $[0,a]$. Consequently, one has
\[
   \int_{0}^{a \lambda^m} \! g(x) \dd x \, \geqslant \,
   \lambda^m  \ts \vartheta^m \int_{0}^{a} C(x) \, g(x) \dd x
   \, = \, c^{}_{g} \, \lambda^m \ts  \vartheta^m
\]
and thus obtains the same type of contradiction as in the
proof of Lemma~\ref{lem:out-growth}. So, we must have
$g=0$ on $[0,a]$ in the Lebesgue sense as claimed.
\end{proof}

\section*{Acknowledgements}
It is our pleasure to thank Yann Bugeaud, David Damanik,
Franz G\"{a}hler, Alan Haynes, Andrew Hubery,
Neil Ma\~{n}ibo and Nicolae Strungaru for helpful
discussions. This work was supported by the German Research Council
(DFG), within the CRC 701.

\end{document}